\newtheorem*{thma}{Theorem A}
\newtheorem*{thmb}{Theorem B}
\newtheorem{cor}{Corollary}
\newtheorem{theorem}{Theorem}[section]
\newtheorem{definition}[theorem]{Definition}
\newtheorem{lemma}[theorem]{Lemma}
\newtheorem{proposition}[theorem]{Proposition}
\newtheorem{corollary}[theorem]{Corollary}
\newtheorem*{corollary*}{Corollary}
\newtheorem{remark}[theorem]{Remark}
\author{Dieter Degrijse}
\address{Department of Mathematics, KU Leuven, Kortrijk, Belgium}%
\email{Dieter.Degrijse@kuleuven-kortrijk.be}%
\author{Nansen Petrosyan}
\address{Department of Mathematics, KU Leuven, Kortrijk, Belgium}%
\email{Nansen.Petrosyan@kuleuven-kortrijk.be}%
\thanks{Both authors were supported by the Research Fund KU Leuven.}
\thanks{The second author was also supported by the FWO-Flanders Research Fellowship.}
\title[]{Bredon cohomological dimensions for groups acting on CAT($0$)-spaces}
\date{\today}
\newcommand{\mF}{\mathcal {F}}
\newcommand{\mv}{\underline{\mathrm{vst}}}
\newcommand{\ms}{\underline{\mathrm{st}}}
\newcommand{\mms}{\underline{\underline{\mathrm{st}}}}
\newcommand{\Z}{\mathbb Z}
\newcommand{\orb}{\mathcal{O}_{\mF}G}
\newcommand{\orbmod}{\mbox{Mod-}\mathcal{O}_{\mF}G}
\newcommand{\nathom}{\mathrm{Hom}_{\mF}}
\begin{document}
\maketitle
\begin{abstract} Let $G$ be a group acting isometrically with discrete orbits on a separable complete CAT(0)-space of bounded topological dimension.
Under certain conditions, we give upper bounds for the Bredon cohomological dimension of $G$ for the families of finite and virtually cyclic subgroups. As an application, we prove that the  mapping class group of any closed, connected, and orientable surface of genus $g\geq 2$ admits a $(9g-8)$-dimensional classifying space  with  virtually cyclic stabilizers. In addition, our results apply  to fundamental groups of graphs of groups and groups acting on Euclidean buildings. In particular, we show that all finitely generated linear groups of positive characteristic have a finite dimensional  classifying space for proper actions and a finite dimensional  classifying space for the family of virtually cyclic subgroups. We also show that every generalized Baumslag-Solitar group has a 3-dimensional model for the classifying space with virtually cyclic stabilizers.
\end{abstract}
\section{Introduction}
Let $G$ be a discrete group and let $\mathcal{F}$ be a family of subgroups of $G$, i.e.~a collection of subgroups of $G$ that is closed under conjugation and taking subgroups. A \emph{classifying space of $G$ for the family $\mathcal{F}$} is a $G$-CW-complex $X$  such that $X^H$ is contractible for every $H$ in $\mathcal{F}$ and  empty when  $H$ is not in  $\mathcal{F}$ (see \cite{tom}). Equivalently, one can characterize $X$ by the property that for any $G$-CW-complex $Y$ with  stabilizers in $\mF$, there exists, up to $G$-homotopy,  a unique $G$-map from $Y$ to $X$. Motivated by the  the Baum-Connes and Farrell-Jones Isomorphism Conjectures, there is a particular interest to study classifying spaces for the families of finite and virtually cyclic subgroups. These conjectures predict isomorphisms between certain equivariant cohomology theories of classifying spaces of $G$ and $K$- and $L$-theories of reduced group $C^*$-algebras and of group rings of $G$  (see e.g. \cite{BCH}, \cite{FJ}, \cite{MV},\cite{LuckReich}). Other applications of classifying spaces for the family of finite subgroups include computations in group cohomology and the formulation of a generalization from finite to infinite groups of the Atiyah-Segal Completion Theorem in topological K-theory  (see \cite[\S7-8]{Luck2}).
With these applications in mind, it is always desirable to have models for $E_{\mathcal{F}}G$ with good geometric properties. One such property is the dimension of $E_{\mathcal{F}}G$. Although a classifying space always exists for any discrete group and a family of subgroups, it need not be finite dimensional. The smallest possible dimension of a model for $E_{\mathcal{F}}G$  is an invariant of the group called the \emph{geometric dimension of $G$ for the family $\mathcal{F}$} and denoted by $\mathrm{gd}_{\mathcal{F}}G$.  

In the present article our aim is  to study  the geometric dimension of  groups that act isometrically on separable CAT(0)-spaces of finite topological dimension.   We do not require the action to be  proper but only to have discrete orbits. Our results extend the main theorem   of \cite{Luck3} from proper actions to actions with discrete orbits, and from proper to separable CAT(0)-spaces.
This allows us to consider examples of  groups that admit actions with infinite stabilizer subgroups on complete, not necessarily proper CAT(0)-spaces, such as finitely generated linear groups of positive characteristic and mapping class groups.

In this approach we make use of  Bredon cohomology which  allows one to analyze finiteness properties of $E_{\mathcal{F}}$G  using homological techniques. For instance, given a discrete group $G$ and a family of subgroups $\mathcal{F}$, there is a notion of Bredon cohomological dimension $\mathrm{cd}_{\mathcal{F}}G$ which satisfies the inequality 
\[ \mathrm{cd}_{\mathcal{F}}G \leq  \mathrm{gd}_{\mathcal{F}}G \leq \max\{3, \mathrm{cd}_{\mathcal{F}}G \}. \] Thus, to show that there exists a finite dimensional model for $E_{\mathcal{F}}G$, it suffices to prove that the Bredon cohomological dimension of $G$ for the family $\mathcal{F}$ is finite. We recall the definition and the necessary properties of Bredon cohomology in Section 3.

In order  to apply  Bredon cohomology in our context,  in Section 2, we associate to the  isometric action of a group on a metric space  a certain cellular action. This is done in Proposition \ref{prop: G-map}, which may be of separate interest to the reader. It asserts that if a group $G$ acts isometrically and with discrete orbits on a separable metric space $X$ then
there exists a $G$-CW-complex $Y$ of dimension at most the topological dimension of $X$ for which the stabilizers are subgroups of point stabilizers of $X$ together with a $G$-map $f: X \rightarrow Y$.

Before stating our main results, let us establish some notation and terminology. 
A group $G$ will always be assumed to be  discrete. If $\mathcal{F}$ is the family of  finite or the family of virtually cyclic subgroups of a groups $G$, then $\mathrm{cd}_{\mathcal{F}}G$ will be denoted by $\underline{\mathrm{cd}}G$ or by $\underline{\underline{\mathrm{cd}}}(G)$, respectively. Suppose $G$ acts on a topological space $X$. We say that $G$ acts \emph{discretely} on $X$ if the orbits $G\cdot x$ are discrete subsets of $X$, for all $x \in X$. Let $\mathcal{E}(G,X)$ be the set containing all groups $E$ that fit into a short exact sequence
$1 \rightarrow N \rightarrow E \rightarrow F \rightarrow 1$, 
where $N$ is a subgroup of the stabilizer $G_x$ for some point $x \in X$ and $F$ is a subgroup of a finite dihedral group. Finally, we define the following values associated to the pair $(G,X)$, which will be used throughout the article
\begin{eqnarray*}
\mv(G,X) &=&\sup\{  \underline{\mathrm{cd}}(E) \;\ | \ E \in  \mathcal{E}(G,X)\} \\
\ms(G,X) &=& \sup\{ {\underline{\mathrm{cd}}}(G_x) \ | \ x \in X  \}  \\
\mms(G,X) &=& \sup\{ \underline{\underline{\mathrm{cd}}}(G_x) \ | \ x \in X  \}.
\end{eqnarray*}

\smallskip

\indent  Clearly, one has $$\ms(G,X) \leq \mv(G,X), \;\;\; \mbox { and } \;\;\; \ms(G,X) \leq \mms(G,X)+1$$ since it is known that $\underline{\mathrm{cd}}(S) \leq\underline{\underline{\mathrm{cd}}}(S)+1$ for any group $S$ (e.g. see \cite[4.2]{DemPetTal}). We are not aware of an example of a group $G$ acting on a space $X$ such that $\ms(G,X)$ is finite but $\mms(G,X)$ or $\mv(G,X)$ are not. On the other hand, using Theorem C of  \cite{DP2} as in \cite[6.5]{DP2}, one can construct examples where $G$ is an integral linear group and $X$ is a point such that both $\mms(G,X)$ and  $\mv(G,X)$ are  arbitrarily larger than    $\ms(G,X)$. \\

In Section $4$, we prove the following.
\begin{thma}  Let $G$ be a group acting isometrically and discretely on a separable CAT(0)-space $X$ of topological dimension $n$,  and 
let $\mathcal{F}$ be a family of subgroups of $G$ such that $X^{H} \neq \emptyset$ for all $H \in \mathcal{F}$. Suppose that there exists an integer $d \geq 0$ such that for each $x \in X$ one has $\mathrm{cd}_{\mathcal{F}\cap G_x}(G_x)\leq d$. Then we have \[\mathrm{cd}_{\mathcal{F}}(G)\leq d+n.\]
\end{thma}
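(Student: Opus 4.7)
The plan is to pass from the isometric action of $G$ on $X$ to a cellular action on a $G$-CW-complex via Proposition \ref{prop: G-map}, and then to bound the projective dimension of the trivial module $\underline{\mathbb{Z}}$ in $\orbmod$ by combining the Bredon chain complex of this cellular model with a dimension-shifting argument.

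First I apply Proposition \ref{prop: G-map} to the action of $G$ on $X$. This produces a $G$-CW-complex $Y$ of dimension at most $n$ whose cell stabilizers are subgroups of point stabilizers of $X$, together with a $G$-map $f\colon X\to Y$. For each $H\in\mathcal{F}$, the hypothesis gives $X^H\neq\emptyset$, and convexity of fixed-point sets in the CAT(0)-space $X$ makes $X^H$ contractible; restricting $f$ yields $f^H\colon X^H\to Y^H$, so that $Y^H\neq\emptyset$. The crucial additional input is that each $Y^H$ is in fact \emph{contractible}. Either the construction underlying Proposition \ref{prop: G-map} already delivers this (essentially $Y$ is a nerve of a sufficiently fine $G$-invariant good cover of $X$, and $f^H$ is a homotopy equivalence on fixed sets), or one secures it by a modest equivariant CW-thickening that preserves both the dimension bound and the stabilizer pattern.

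Once $Y^H$ is known to be contractible for every $H\in\mathcal{F}$, the cellular Bredon chain complex of $Y$ constitutes an exact sequence
$$
0\longrightarrow C_n(Y)\longrightarrow\cdots\longrightarrow C_0(Y)\longrightarrow\underline{\mathbb{Z}}\longrightarrow 0
$$
in $\orbmod$ of length at most $n$. Each $C_i(Y)$ is a direct sum, indexed by the $G$-orbits of $i$-cells, of free Bredon modules based at the cell stabilizers $G_\sigma$. By a standard fact in Bredon cohomology, the projective dimension of such a free module is bounded by $\mathrm{cd}_{\mathcal{F}\cap G_\sigma}(G_\sigma)$. Since $G_\sigma$ is contained in some point stabilizer $G_x$, and Bredon cohomological dimension is monotone under restriction of a family to a subgroup, one has $\mathrm{cd}_{\mathcal{F}\cap G_\sigma}(G_\sigma)\leq\mathrm{cd}_{\mathcal{F}\cap G_x}(G_x)\leq d$.

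Finally, splicing a projective resolution of length at most $d$ of each $C_i(Y)$ into the length-$n$ exact sequence above yields, by standard dimension shifting, a projective resolution of $\underline{\mathbb{Z}}$ in $\orbmod$ of total length at most $n+d$, whence $\mathrm{cd}_{\mathcal{F}}(G)\leq n+d$. The main obstacle is the contractibility claim for each $Y^H$: if Proposition \ref{prop: G-map} as stated does not deliver it directly, one has to examine its proof and use the CAT(0)-geometry of $X$ — specifically the convexity of each $X^H$ — to arrange the covering data so that $f$ induces weak equivalences on all relevant fixed-point sets.
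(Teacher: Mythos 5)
Your reduction to the $G$-CW-complex $Y$ of Proposition \ref{prop: G-map} and the final splicing argument are fine in outline, but the step you yourself flag as the ``crucial additional input'' --- that $Y^H$ is contractible (or at least acyclic) for every $H\in\mathcal{F}$ --- is a genuine gap, and neither of your proposed fixes works. The complex $Y$ produced by Proposition \ref{prop: G-map} is the nerve of a $G$-invariant good open cover of $X$; nothing in that construction controls the homotopy type of the members of the cover or of their intersections, so $Y$ need not be (equivariantly) homotopy equivalent to $X$, and $Y^H$, which is spanned by the $H$-invariant members of the cover, has no reason to be connected, let alone contractible. The restriction $f^H\colon X^H\to Y^H$ only gives nonemptiness. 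Moreover, repairing this by an ``equivariant CW-thickening'' that keeps the dimension at most $n$ and the stabilizer pattern is not a modest step: producing an at most $n$-dimensional $G$-CW-complex with contractible $H$-fixed sets for all $H$ in the family generated by the point stabilizers is essentially Corollary \ref{cor: stab}, which the paper deduces \emph{from} Theorem A. As written, your argument therefore begs the question at its central point.

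The paper's proof is arranged precisely to avoid needing any acyclicity of $Y^H$. It uses the CAT(0) geodesic contraction (and the hypothesis $X^H\neq\emptyset$) only to show that $X\times J_{\mathcal{F}}G$ is a model for $J_{\mathcal{F}}G$, which yields $G$-maps $E_{\mathcal{F}}G\to Y\times E_{\mathcal{F}}G\to E_{\mathcal{F}}G$ whose composite is $G$-homotopic to the identity. Hence $C_{\ast}(E_{\mathcal{F}}G)$ is \emph{dominated} by $C_{\ast}(E_{\mathcal{F}}G\times Y)$, and for the latter one only needs: (a) the spectral sequence over the cells of $Y$ (with $\dim Y\leq n$ and $\mathrm{cd}_{\mathcal{F}\cap G_{\sigma}}(G_{\sigma})\leq d$) to kill $\mathrm{H}^{n+d+1}$ of $\nathom(C_{\ast}(E_{\mathcal{F}}G\times Y),M)$, and (b) the observation that $(E_{\mathcal{F}}G\times Y)^H \simeq Y^H$ has no homology above degree $n$ simply because $\dim Y^H\leq n$. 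Proposition \ref{prop: dominated} then converts domination plus these two vanishing statements into an $(n+d)$-dimensional projective resolution of $\underline{\mathbb{Z}}$. If you want to salvage your more direct route, you would have to replace $Y$ by a genuine finite-dimensional classifying space for the family generated by the stabilizers, which is exactly what you do not yet have.
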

The following corollary is immediate.
\begin{cor}\label{cor: stab} Let $G$ be a group acting isometrically and discretely on a separable CAT(0)-space $X$ of topological dimension $n$, and 
let $\mathcal{F}$ be the smallest family of subgroups of $G$ containing the point stabilizers $G_x$, for every $x \in X$.  Then we have \[\mathrm{cd}_{\mathcal{F}}(G)\leq n.\]
\end{cor}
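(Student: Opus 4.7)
The plan is to deduce the corollary as a direct application of Theorem A with the parameter $d$ set to zero. Before invoking Theorem A, I would first give an explicit description of the family $\mathcal{F}$: since $\mathcal{F}$ is the smallest family containing all point stabilizers $G_x$ and must be closed under conjugation and taking subgroups, and since conjugation only permutes stabilizers via $gG_xg^{-1}=G_{gx}$, the family $\mathcal{F}$ consists precisely of all subgroups $H\leq G$ such that $H\subseteq G_x$ for some $x\in X$.

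With this description in hand, the two hypotheses of Theorem A become easy to verify. First, if $H\in\mathcal{F}$ then $H\subseteq G_x$ for some $x\in X$, so $x\in X^H$ and hence $X^H\neq\emptyset$. Second, I would examine the restricted family $\mathcal{F}\cap G_x$, which by definition consists of those subgroups of $G_x$ that lie in $\mathcal{F}$. Since every subgroup of $G_x$ is automatically contained in the point stabilizer $G_x$ itself, $\mathcal{F}\cap G_x$ is just the family of \emph{all} subgroups of $G_x$. The Bredon cohomological dimension of any group with respect to the family of all its subgroups is zero, because the one-point $G_x$-CW-complex serves as a model for the classifying space (its fixed-point set under any subgroup is again a point, hence contractible). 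Thus one may take $d=0$ in Theorem A.

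Applying Theorem A with $d=0$ then yields $\mathrm{cd}_{\mathcal{F}}(G)\leq 0+n=n$, which is the desired bound. There is no genuine obstacle here beyond correctly unpacking the definition of the smallest family generated by the stabilizers and identifying the restriction $\mathcal{F}\cap G_x$; once these bookkeeping points are made, the corollary follows immediately from Theorem A.
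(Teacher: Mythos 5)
Your proof is correct and is exactly the deduction the paper has in mind: the paper simply declares the corollary "immediate" from Theorem A, and your unpacking — that $\mathcal{F}$ consists of all subgroups of point stabilizers (so $X^H\neq\emptyset$ for $H\in\mathcal{F}$), that $\mathcal{F}\cap G_x$ is the family of all subgroups of $G_x$, and that the one-point space then gives $\mathrm{cd}_{\mathcal{F}\cap G_x}(G_x)=0$ — is the intended justification for taking $d=0$. No issues.
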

Since each isometric action of a finite group on complete CAT($0$)-space has a global fixed point (see Corollary II.2.8(1) in \cite{BridHaef}), we conclude the following from Theorem A.
\begin{cor}\label{cor: finite} Let $G$ be a group acting isometrically and discretely on a complete separable CAT(0)-space $X$ of topological dimension $n$. Then 
\[  \underline{\mathrm{cd}}(G)\leq \ms(G,X)+n. \]
\end{cor}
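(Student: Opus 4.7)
The plan is to deduce this corollary directly from Theorem A by taking $\mathcal{F}$ to be the family of all finite subgroups of $G$, so that $\mathrm{cd}_{\mathcal{F}}(G) = \underline{\mathrm{cd}}(G)$ by definition. To apply Theorem A, I need to verify its two hypotheses: that $X^H \neq \emptyset$ for every $H \in \mathcal{F}$, and that there is a uniform bound $d$ with $\mathrm{cd}_{\mathcal{F} \cap G_x}(G_x) \leq d$ for every $x \in X$.

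For the fixed-point hypothesis, I would invoke the Bruhat--Tits / Cartan fixed point theorem stated as Corollary II.2.8(1) in Bridson--Haefliger: any isometric action of a finite group on a complete CAT(0)-space has a global fixed point. Since $X$ is a complete CAT(0)-space and $G$ acts isometrically, every finite subgroup $H \leq G$ acts isometrically on $X$ and hence admits a fixed point, so $X^H \neq \emptyset$. This is exactly the place where completeness (beyond separability and finite topological dimension) enters the argument.

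For the uniform bound, observe that the family $\mathcal{F} \cap G_x$ is precisely the family of finite subgroups of the stabilizer $G_x$, so $\mathrm{cd}_{\mathcal{F} \cap G_x}(G_x) = \underline{\mathrm{cd}}(G_x)$. By definition of $\ms(G,X)$, we therefore have $\mathrm{cd}_{\mathcal{F} \cap G_x}(G_x) \leq \ms(G,X)$ for every $x \in X$, so I take $d = \ms(G,X)$. (If $\ms(G,X) = \infty$, the inequality is vacuous, so we may as well assume it is finite.)

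With both hypotheses in place, Theorem A gives $\mathrm{cd}_{\mathcal{F}}(G) \leq d + n = \ms(G,X) + n$, which is the desired conclusion. The deduction is essentially a formal one; the only substantive ingredient beyond Theorem A itself is the CAT(0) fixed-point theorem for finite groups, and no new obstacle should arise.
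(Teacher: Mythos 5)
Your proposal is correct and matches the paper's own deduction exactly: the paper derives this corollary from Theorem A by applying the fixed-point theorem for finite groups acting on complete CAT(0)-spaces (Corollary II.2.8(1) of Bridson--Haefliger) to verify $X^H\neq\emptyset$ for all finite $H$, and by taking $d=\ms(G,X)$ as the uniform bound on the stabilizers. No gap.
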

The next theorem provides an upper bound for $\underline{\underline{\mathrm{cd}}}(G)$.
\begin{thmb} \label{th: b}
Let $G$ be a countable group acting discretely by semi-simple isometries on a complete separable CAT(0)-space $X$ of topological dimension $n$. Then 
\[\underline{\underline{\mathrm{cd}}}(G)\leq \max\{\mms(G, X), \mv(G, X)+1\}+n. \]
\end{thmb}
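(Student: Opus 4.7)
The plan is to combine the L\"uck--Weiermann pushout construction for the classifying space of virtually cyclic subgroups with two applications of Theorem A. Let $\underline{\mathcal{F}}$ and $\underline{\underline{\mathcal{F}}}$ denote the families of finite and virtually cyclic subgroups of $G$, and equip $\underline{\underline{\mathcal{F}}} \setminus \underline{\mathcal{F}}$ with the commensurability relation $V \sim V'$ iff $|V \cap V'| = \infty$. For each class $[V]$, write $N_G[V] = \{g \in G : gVg^{-1} \sim V\}$ for its conjugation-setwise stabilizer, and $\mathcal{F}[V]$ for the family of subgroups of $N_G[V]$ that are either finite or belong to $[V]$. The L\"uck--Weiermann recipe produces
\[
\underline{\underline{\mathrm{cd}}}(G) \leq \max\Bigl\{\underline{\mathrm{cd}}(G),\; \sup_{[V]} \mathrm{cd}_{\mathcal{F}[V]}(N_G[V]) \Bigr\}.
\]
The first term is controlled by Corollary \ref{cor: finite}: $\underline{\mathrm{cd}}(G) \leq \ms(G,X) + n \leq \mv(G,X) + n$, which lies below the desired bound.

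For the second term, fix a class $[V]$ and a hyperbolic element $g \in V$ generating a finite-index infinite cyclic subgroup (such $g$ exists since the action is by semi-simple isometries). The minset $L := \mathrm{Min}(g) \subseteq X$ is a closed convex $N_G[V]$-invariant complete CAT(0) subspace of topological dimension at most $n$, and admits the canonical isometric splitting $L \cong \mathbb{R} \times Y$ with $\dim Y \leq n-1$. Every $h \in N_G[V]$ preserves this splitting and acts as $h \cdot (t,y) = (\epsilon_h t + c_h,\phi_h(y))$ with $\epsilon_h \in \{\pm 1\}$; in particular, every infinite-order element of $V$ acts trivially on $Y$. Consequently each $H \in \mathcal{F}[V]$ has a finite-index subgroup acting trivially on $Y$, and the remaining finite quotient of $H$ has a fixed point on the complete CAT(0) space $Y$. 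Combining this with discreteness of the $N_G[V]$-action on $Y$ (which follows because the $\mathbb{R}$-factor of $L$ is exhausted by $\langle g\rangle$-translates of a bounded fundamental interval, so accumulation in $Y$ would force accumulation in $L$), Theorem A yields
\[
\mathrm{cd}_{\mathcal{F}[V]}(N_G[V]) \leq d + (n-1),
\]
where $d$ uniformly bounds $\mathrm{cd}_{\mathcal{F}[V]\cap N_y}(N_y)$ for the stabilizers $N_y := (N_G[V])_y$, $y \in Y$.

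The analysis of $N_y$ proceeds by looking at its action on the axis $\mathbb{R} \times \{y\}$. The kernel $K$ of this action is contained in the point stabilizer $G_{(0,y)}$, and the quotient $N_y/K$ embeds as a discrete subgroup of $\mathrm{Isom}(\mathbb{R})$, hence is either finite (cyclic or dihedral) or infinite (cyclic or dihedral). When $N_y/K$ is finite, $N_y \in \mathcal{E}(G,X)$ and $\mathcal{F}[V] \cap N_y$ reduces to the family of finite subgroups of $N_y$, giving $d \leq \underline{\mathrm{cd}}(N_y) \leq \mv(G,X)$. When $N_y/K$ is infinite, the axis serves as a one-dimensional model whose vertex stabilizers again lie in $\mathcal{E}(G,X)$ and contribute $\mv(G,X)+1$; the alternative $\mms(G,X)$ absorbs the contribution from the potentially-infinite kernel $K$, via the bound $\underline{\underline{\mathrm{cd}}}(K) \leq \mms(G,X)$ and $K \leq G_{(0,y)}$. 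Collecting these estimates yields $d \leq \max\{\mms(G,X),\, \mv(G,X)+1\}$, hence
\[
\mathrm{cd}_{\mathcal{F}[V]}(N_G[V]) \leq \max\{\mms(G,X),\, \mv(G,X)+1\} + n,
\]
which combines with the first term above to give the theorem. The main obstacle is the infinite-quotient case: one must explicitly construct a classifying space for the intricate family $\mathcal{F}[V] \cap N_y$ by combining a model for the axis action with a model for the subgroups of the potentially-infinite kernel $K$, and verify that the resulting $N_y$-space has contractible fixed sets for exactly the subgroups in $\mathcal{F}[V] \cap N_y$.
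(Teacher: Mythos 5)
Your overall architecture (L\"uck--Weiermann pushout plus Theorem A applied to minimal sets) matches the paper's, but there is a genuine gap at the very first step of your analysis of the classes $[V]$: you assert that every infinite virtually cyclic subgroup $V$ contains a hyperbolic element ``since the action is by semi-simple isometries''. Semi-simplicity only says each element is elliptic \emph{or} hyperbolic, and because the action here is merely discrete (not proper), the point stabilizers $G_x$ may be infinite, so there are infinite cyclic subgroups all of whose infinite-order elements are elliptic; by Proposition II.6.7 and Theorem II.6.8(2) of \cite{BridHaef} the entire commensuration class of such a subgroup is then of ``elliptic type'' and your minset argument never applies to it. The paper treats this case separately in Proposition \ref{prop: first one}: for elliptic $H$ every $S\in\mathcal F[H]$ has a nonempty convex fixed set in $X$, so Theorem A applies to $\mathrm N_G[H]$ acting on all of $X$ with the family $\mathcal F[H]$, and the resulting stabilizer bound $\max\{\underline{\mathrm{cd}}(G_x),\underline{\underline{\mathrm{cd}}}(G_x)\}$ (via Lemma 6.3 of \cite{DP2}) is exactly where the $\mms(G,X)$ term of the theorem comes from. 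Your attempt to account for $\mms(G,X)$ through the kernel of the axis action in the hyperbolic case does not recover this contribution.

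In the hyperbolic case there are two further problems. First, $\mathrm{Min}(g)$ need not be invariant under all of $\mathrm N_G[V]$: an element $h$ with $hg^ah^{-1}=g^{\pm b}$ only relates $\mathrm{Min}(g^a)$ to $\mathrm{Min}(g^b)$, and in general no single power of $g$ is normalized (up to inversion) by the whole commensurator. The paper circumvents this by passing to finitely generated subgroups $K\le \mathrm N_G[H]$ containing $H$, where such a power exists, $H$ becomes normal in $K$, and $\mathrm{cd}_{\mathcal F[H]\cap K}(K)=\underline{\mathrm{cd}}(K/H)$ by Lemma 4.2 of \cite{DP2}; the passage back to $\mathrm N_G[H]$ via Lemma \ref{lemma: finitely generated} is precisely what produces the $+1$ in $\mv(G,X)+1$. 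Second, you yourself flag the remaining obstacle --- bounding $\mathrm{cd}_{\mathcal F[V]\cap N_y}(N_y)$ for the ``intricate family'' when the axis quotient is infinite --- and leave it unresolved; the quotient-by-$H$ device just described is how the paper dissolves that family into the family of finite subgroups of $K/H$ (so that Theorem A applies to the induced action of $K/H$ on $Y$ with stabilizers in $\mathcal E(G,X)$), and this step cannot be skipped.
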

Let us note that the assumption of the theorem that $G$ acts by semi-simple isometries is satisfied when $X$ is a proper metric space on which $G$ acts cocompactly (see Proposition \ref{prop: cocompact}), or when $X$ is an $\mathbb{R}$-tree (see \cite[II.6.6(3)]{BridHaef}), or when $X$ is a piecewise Euclidean complex with finite shapes on which $G$ acts by cellular isometries (see \cite[Theorem A]{Bridson}). 

In the last section, Section 5, we consider several concrete applications of the above theorems. We discuss these next.
\begin{cor} \label{cor: intro specific stab}Let $G$ be a countable group acting discretely by semi-simple isometries on a complete separable CAT(0)-space $X$ of topological dimension $n$. \begin{itemize}
\item[(i)] If $G_x$
is finite for each $x \in X$, then we have
\begin{equation*}
\underline{\mathrm{cd}}(G)  \leq  n \ \;\;\; \mbox{and} \;\;\; \  \  \underline{\underline{\mathrm{cd}}}(G)  \leq n+1.
\end{equation*}
\item[(ii)] If $G_x$
is virtually free for each $x \in X$, then we have
\begin{equation*}
\underline{\mathrm{cd}}(G)  \leq  n+1 \ \;\;\; \mbox{and} \;\;\; \  \  \underline{\underline{\mathrm{cd}}}(G)  \leq n+2.
\end{equation*}
\item[(iii)]If $G_x$
is virtually polycyclic of Hirsch length at most $h$ for each $x \in X$, then 
\begin{equation*}
\underline{\mathrm{cd}}(G)  \leq  n+h   \ \;\;\; \mbox{and} \;\;\; \  \ \underline{\underline{\mathrm{cd}}}(G)  \leq n+h+1.
\end{equation*}
\item[(iv)]If $G_x$
is elementary amenable of Hirsch length at most $h$ for each $x \in X$, then 
\begin{equation*}
\underline{\mathrm{cd}}(G)  \leq  n+h+1  \  \;\;\; \mbox{and} \;\;\; \  \ \underline{\underline{\mathrm{cd}}}(G)  \leq n+h+2.
\end{equation*}
\end{itemize}
\end{cor}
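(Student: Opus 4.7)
The plan is to derive each pair of inequalities from Corollary~\ref{cor: finite} (for $\underline{\mathrm{cd}}(G)$) and from Theorem B (for $\underline{\underline{\mathrm{cd}}}(G)$), after explicitly bounding $\ms(G,X)$, $\mms(G,X)$ and $\mv(G,X)$ under the hypotheses of each part.

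First I would collect standard upper bounds on $\underline{\mathrm{cd}}$ and $\underline{\underline{\mathrm{cd}}}$ for groups in the four stabilizer classes. Finite groups give $\underline{\mathrm{cd}}=\underline{\underline{\mathrm{cd}}}=0$. A virtually free group acts on a tree with finite stabilizers, so $\underline{\mathrm{cd}}\leq 1$, and a direct Bass--Serre argument (together with the inequality $\underline{\mathrm{cd}}(S)\leq\underline{\underline{\mathrm{cd}}}(S)+1$ recalled in the introduction) yields $\underline{\underline{\mathrm{cd}}}\leq 2$. Virtually polycyclic groups of Hirsch length $\leq h$ classically satisfy $\underline{\mathrm{cd}}\leq h$ and hence $\underline{\underline{\mathrm{cd}}}\leq h+1$. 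For elementary amenable groups of Hirsch length $\leq h$, the Flores--Nucinkis theorem gives $\underline{\mathrm{cd}}\leq h+1$, which in turn yields $\underline{\underline{\mathrm{cd}}}\leq h+2$.

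Second, I would bound $\mv(G,X)$. Any $E\in\mathcal{E}(G,X)$ sits in a short exact sequence $1\to N\to E\to F\to 1$ with $N\leq G_x$ for some $x$ and $F$ a subgroup of a finite dihedral group, hence finite. In each of the four cases the stabilizer class is closed under taking subgroups and under finite extensions without increasing the relevant parameter (Hirsch length, or virtual freeness), so $E$ belongs to the same class with the same numerical bound. Thus $\mv(G,X)$ is bounded by $0,1,h,h+1$ in cases (i)--(iv), matching the corresponding bounds on $\ms(G,X)$.

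Third, I would substitute these numerics into Corollary~\ref{cor: finite} to obtain the stated bounds on $\underline{\mathrm{cd}}(G)$, and into Theorem B to obtain the bounds on $\underline{\underline{\mathrm{cd}}}(G)$. In each case one checks directly that $\mms(G,X)\leq\mv(G,X)+1$, so the maximum in Theorem B reduces to $\mv(G,X)+1$, and adding $n$ gives precisely the second inequality in each part. The main technical hurdle is case (iv), which depends on the Flores--Nucinkis bound for elementary amenable groups of finite Hirsch length; the remaining cases use only classical facts about Bredon cohomology of finite, virtually free, and virtually polycyclic groups.
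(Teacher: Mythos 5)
Your overall strategy is exactly the paper's: bound $\ms$, $\mms$ and $\mv$ using the closure of each stabilizer class under subgroups and finite extensions, then feed these numbers into Corollary \ref{cor: finite} and Theorem B. The treatment of $\mv(G,X)$ via the short exact sequence $1\to N\to E\to F\to 1$ with $F$ finite is correct and is what the paper does. However, there is a genuine gap in how you obtain the bounds on $\underline{\underline{\mathrm{cd}}}$ of the stabilizer classes, which you need in order to control $\mms(G,X)$ and to verify that $\mms(G,X)\leq\mv(G,X)+1$. In cases (iii) and (iv) you write that $\underline{\mathrm{cd}}\leq h$ ``hence'' $\underline{\underline{\mathrm{cd}}}\leq h+1$, and that $\underline{\mathrm{cd}}\leq h+1$ ``in turn yields'' $\underline{\underline{\mathrm{cd}}}\leq h+2$; in case (ii) you invoke the inequality $\underline{\mathrm{cd}}(S)\leq\underline{\underline{\mathrm{cd}}}(S)+1$. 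That inequality goes in the wrong direction for your purpose: it bounds $\underline{\mathrm{cd}}$ by $\underline{\underline{\mathrm{cd}}}$, not conversely, and no general implication of the form ``$\underline{\mathrm{cd}}(S)\leq k\Rightarrow\underline{\underline{\mathrm{cd}}}(S)\leq k+1$'' is available. Indeed, the introduction of the paper explicitly records examples (integral linear groups, with $X$ a point) where $\mms(G,X)=\underline{\underline{\mathrm{cd}}}(G)$ is arbitrarily larger than $\ms(G,X)=\underline{\mathrm{cd}}(G)$, so the step you are using is false as a general principle.

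The statements you need are nevertheless true, but they are theorems in their own right and must be cited rather than derived: $\underline{\underline{\mathrm{cd}}}(S)\leq h+1$ for virtually polycyclic $S$ of Hirsch length $h$ is \cite[5.13]{LuckWeiermann}, and $\underline{\underline{\mathrm{cd}}}(S)\leq h+2$ for countable elementary amenable $S$ of Hirsch length $h$ is \cite[Theorem A]{DP2}. For the virtually free case the paper obtains $\underline{\underline{\mathrm{cd}}}(S)\leq 2$ by applying Theorem B itself to the Dunwoody tree action with finite stabilizers (where $\mms=0$ and $\mv\leq 1$), which is presumably what your ``direct Bass--Serre argument'' intends; if so, drop the misleading parenthetical. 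With these corrections the rest of your computation --- $\mms\leq\mv+1$ in each case, so the maximum in Theorem B is $\mv+1$, giving $n+1$, $n+2$, $n+h+1$, $n+h+2$ respectively --- goes through as stated.
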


 Since every simplicial tree can be viewed as a one-dimensional CAT(0)-space, this result applies to fundamental groups of  graphs of groups (see \cite{Serre}) and in particular to generalized Baumslag-Solitar groups. By definition, a generalized Baumslag-Solitar group $G$ is  a fundamental group of a graph of groups where all vertex and edge groups are infinite cyclic. In this case, we can actually determine the Bredon cohomological dimension of $G$.

\begin{cor} \label{cor: intro baumslag}  Let $G$ be a generalized Baumslag-Solitar group, then
\[
\underline{\underline{\mathrm{cd}}}(G) = \left\{
\begin{array}{@{}ll@{\ }l}
	3 &  \text{if } &\Z^2\subseteq G,\\
	
	0 &  \text{if } &\Z \cong G,\\
	
	2 & \multicolumn{2}{l}{\text{otherwise}.}
\end{array} \right.
\]
\end{cor}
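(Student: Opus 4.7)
The case $G \cong \Z$ is immediate since $G$ is then virtually cyclic and $\underline{\underline{\mathrm{cd}}}(\Z) = 0$. For $G \not\cong \Z$, my plan is to apply Theorem B to the Bass--Serre tree $T$ of $G$: it is a $1$-dimensional complete CAT(0)-space on which $G$ acts simplicially, hence by semi-simple isometries, and discretely, with infinite cyclic vertex and edge stabilizers. For this action $\mms(G,T) = \underline{\underline{\mathrm{cd}}}(\Z) = 0$, and any $E \in \mathcal{E}(G,T)$ is an extension of a finite group by a subgroup of $\Z$, hence virtually cyclic with $\underline{\mathrm{cd}}(E) \leq 1$; whence $\mv(G,T) \leq 1$. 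Theorem B then yields the uniform upper bound
\[
\underline{\underline{\mathrm{cd}}}(G) \leq \max\{0,\,1+1\} + 1 = 3,
\]
which is sharp whenever $\Z^2 \subseteq G$ by monotonicity of Bredon cohomological dimension under subgroups, together with the classical computation $\underline{\underline{\mathrm{cd}}}(\Z^2) = 3$.

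In the remaining case $\Z^2 \not\subseteq G$ and $G \not\cong \Z$, I would aim to establish $\underline{\underline{\mathrm{cd}}}(G) = 2$ by handling the two inequalities separately. For the upper bound, Theorem A applied to $T$ with the family of finite subgroups gives $\underline{\mathrm{cd}}(G) \leq 2$, since $\underline{\mathrm{cd}}(\Z) = 1$. To pass from $\underline{E}G$ to $\underline{\underline{E}}G$ without raising the dimension, I would invoke the Lück--Weiermann push-out, which assembles $\underline{\underline{E}}G$ out of $\underline{E}G$ together with, for each commensurability class $[C]$ of maximal infinite cyclic subgroups, a model for the classifying space of the commensurator $N_G[C]$ for the family of its subgroups that are finite or commensurable with $C$. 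The key input is that, under the hypothesis $\Z^2 \not\subseteq G$, every such commensurator $N_G[C]$ in a GBS group is itself virtually cyclic, so each attached factor contributes dimension at most $\underline{\mathrm{gd}}(N_G[C]) + 1 \leq 2$ and the resulting model for $\underline{\underline{E}}G$ is $2$-dimensional.

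For the matching lower bound, I would use that $G$ is torsion-free (as the fundamental group of a graph of $\Z$'s) and not infinite cyclic, so a Mayer--Vietoris argument along the Bass--Serre splitting gives $\mathrm{cd}(G) = 2$. Since $G$ is not virtually cyclic, $\underline{\underline{\mathrm{cd}}}(G) \geq 1$ is immediate. The upgrade to $\underline{\underline{\mathrm{cd}}}(G) \geq 2$ I would deduce by comparing Bredon cohomology of $\underline{\underline{E}}G$ against ordinary group cohomology via the long exact sequence arising from the Lück--Weiermann push-out above: the non-vanishing of $H^2(G;\Z G)$ (forced by $\mathrm{cd}(G)=2$) propagates to produce a Bredon coefficient system detected by $H^2_{\mathcal{VC}}(G;-)$.

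The main obstacle is the group-theoretic verification that commensurators of infinite cyclic subgroups in a GBS group without $\Z^2$ are themselves virtually cyclic. I would approach this by working on the Bass--Serre tree: each infinite cyclic subgroup of $G$ is either elliptic, fixing a vertex and contained in the corresponding $\Z$ stabilizer, or hyperbolic with a unique axis; in either case, any commensurating element must preserve either this fixed set or the axis together with its translation-length structure. The absence of $\Z^2$ then precludes a second independent translation and confines the commensurator to a finite extension of the original cyclic subgroup.
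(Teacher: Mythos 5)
Your reduction to the Bass--Serre tree $T$, the uniform upper bound $\underline{\underline{\mathrm{cd}}}(G)\leq 3$ via Theorem B, and the sharpness when $\Z^2\subseteq G$ all agree with the paper. The two remaining steps, however, each contain a genuine gap.

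First, your ``key input'' --- that when $\Z^2\not\subseteq G$ every commensurator of an infinite cyclic subgroup of $G$ is virtually cyclic --- is false. It fails for the class of \emph{elliptic} cyclic subgroups: by Lemma \ref{lem: elliptic}, any two elliptic elements of a GBS group generate equivalent infinite cyclic subgroups, so there is a single elliptic commensurability class $[K]$ and $\mathrm{N}_G[K]=G$, which is not virtually cyclic. Concretely, in $BS(1,2)$ one has $x\langle y\rangle x^{-1}=\langle y^2\rangle$, so $x$ commensurates $\langle y\rangle$ while moving its fixed vertex; no copy of $\Z^2$ is needed for this. Your heuristic that a commensurating element ``must preserve the fixed set'' is exactly what breaks down. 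The paper's way around this is not to make $\mathrm{N}_G[K]$ small but to make the relative family small: $\mathcal{TR}[K]$ consists precisely of the cyclic subgroups generated by elliptic elements, so the tree $T$ itself is a model for $E_{\mathcal{TR}[K]}G$ and $\mathrm{cd}_{\mathcal{TR}[K]}(G)\leq 1$, which is all the L\"uck--Weiermann sequence requires. (Your analysis of the hyperbolic classes is correct and matches the paper: there $\mathrm{N}_G[H]\cong\Z$.)

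Second, the lower bound. In the L\"uck--Weiermann long exact sequence the summand $\mathrm{H}^2(G,M)$ of the middle term maps by the identity onto the factor $\mathrm{H}^2(G,M)$ of the right-hand term coming from the elliptic class, so the non-vanishing of $\mathrm{H}^2(G;\Z G)$ contributes nothing to $\mathrm{H}^{2}_{\mathcal{VC}}(G,-)$; it cannot ``propagate'' as you suggest. What actually forces $\mathrm{H}^{2}_{\mathcal{VC}}(G,M)\neq 0$ is the degree-one part of the sequence: $\mathrm{H}^{2}_{\mathcal{VC}}(G,M)$ surjects onto (indeed equals) the cokernel of
\[
\mathrm{H}^{1}(G,M)\oplus\mathrm{H}^{1}_{\mathcal{TR}[K]}(G,M)\;\longrightarrow\;\mathrm{H}^{1}(G,M)\oplus\prod_{[H]\in\mathcal{I}_0}\mathrm{H}^{1}(\Z,M),
\]
where $\mathcal{I}_0$ runs over the hyperbolic classes. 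One must therefore show there are ``too many'' hyperbolic classes, and this needs a case split, which the paper carries out via Kropholler's theorem: if the second derived subgroup of $G$ is a non-abelian free group, one uses that $\mathcal{I}_0$ is infinite while $\mathrm{H}^1$ of a finitely generated free subgroup is finitely generated; otherwise $G\cong BS(1,n)$ with $n\neq\pm 1$ and one exhibits two inequivalent hyperbolic classes against $\mathrm{H}^{1}(G,\Z)\cong\Z$. As written, your argument only yields $\underline{\underline{\mathrm{cd}}}(G)\geq 1$ in this case.
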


Another source of examples to which we can apply Theorems A and B are finitely generated linear groups of positive characteristic. By the fundamental work of Bruhat and Tits (see \cite{BT}),  such groups admit fixed-point-free actions on Euclidean buildings. These buildings  have a natural piecewise Euclidean metric  which turns out to be CAT(0).

\begin{cor} \label{cor: intro linear} Let $G$ be a finitely generated subgroup of $\mathrm{GL}_n(F)$ where  $F$ is a field of positive characteristic. Then $${\underline{\mathrm{cd}}}(G)<\infty \;\;\; \mbox{ and } \;\;\; \underline{\underline{\mathrm{cd}}}(G)<\infty.$$
\end{cor}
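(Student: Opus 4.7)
The plan is to realize $G$ as a discrete isometric action on a finite-dimensional complete separable CAT(0)-space with \emph{finite} point stabilizers, and then to apply Corollary \ref{cor: finite} and Theorem B directly. Since $G$ is finitely generated, the matrix entries of a generating set, together with the inverses of their determinants, lie in a finitely generated subring $R$ of $F$; letting $K$ denote its fraction field, we may assume $G \subseteq \mathrm{GL}_n(K)$ with $K$ a finitely generated field of characteristic $p$.

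Next I would appeal to the theory of Bruhat-Tits and $S$-integers in positive characteristic to produce a finite set $S$ of discrete valuations of $K$ (each with finite residue field) such that $R$ consists of $S$-integers and the diagonal embedding
\[ G \hookrightarrow \prod_{v \in S} \mathrm{GL}_n(K_v) \]
is discrete in the locally compact product. Each completion $K_v$ is then a non-Archimedean local field of positive characteristic; I would let $X_v$ denote its (extended) Bruhat-Tits building, a locally finite piecewise Euclidean CAT(0)-complex of finite dimension on which the full $\mathrm{GL}_n(K_v)$ acts by cellular isometries with compact parahoric stabilizers. The product $X := \prod_{v \in S} X_v$ is then a finite-dimensional complete separable CAT(0)-space, and by \cite[Theorem A]{Bridson} the diagonal cellular $G$-action is by semi-simple isometries.

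The key observation is that every stabilizer in $\prod_{v \in S} \mathrm{GL}_n(K_v)$ of a point of $X$ is a product of compact groups, hence compact; since $G$ is discrete in this product, each $G_x$ is the intersection of a discrete subgroup with a compact subgroup, and therefore finite. In particular $\ms(G,X) = \mms(G,X) = \mv(G,X) = 0$, and so Corollary \ref{cor: finite} gives $\underline{\mathrm{cd}}(G) < \infty$ while Theorem B gives $\underline{\underline{\mathrm{cd}}}(G) < \infty$.

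The step I expect to be the main obstacle is the construction of a finite set $S$ of valuations with finite residue fields for which the diagonal embedding is genuinely discrete. When $K$ has transcendence degree one over $\mathbb{F}_p$ this is immediate from the classical theory of function fields over a finite field; in higher transcendence degree one must work harder — for instance by iteratively specializing transcendence variables, or by working with a suitable rank-$r$ valuation — in order to land in the non-Archimedean local-field setting where the Bruhat-Tits machinery with locally finite buildings applies. Once this input is in hand, everything else is a formal invocation of the theorems of this paper.
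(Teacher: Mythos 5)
Your overall skeleton --- reduce to a finitely generated subring $R$ of $F$ with fraction field $K$, choose finitely many discrete valuations, let $G$ act diagonally on the product of the associated Bruhat--Tits buildings, and invoke Theorem A/B --- matches the paper's. But the step you single out as the main obstacle is a genuine gap, and it cannot be repaired in the form you propose. When $K$ has transcendence degree at least $2$ over $\mathbb{F}_p$, there is no finite set of discrete valuations for which the diagonal embedding of $G$ (or even of the additive group of $R$) into the product of the completions is discrete: discrete embeddings of rings of integers into finite products of local fields are a feature of global fields only (this is essentially the Artin--Whaples product formula characterization), and the completions of a higher-dimensional function field at discrete rank-one valuations are in general not local fields --- the residue fields need not be finite and the buildings need not be locally finite, so the parahoric stabilizers need not be compact. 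Consequently the conclusion that each $G_x$ is ``discrete intersected with compact, hence finite'' fails, and with it the reduction to $\underline{\mathrm{st}}(G,X)=0$. Your two suggested fixes do not rescue this: specializing transcendence variables replaces $G$ by a proper quotient of it, and higher-rank valuations do not produce locally compact completions or polyhedral Euclidean buildings to which Theorem B applies.

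The paper's proof circumvents exactly this point by settling for a weaker control of the stabilizers, following Cornick--Kropholler \cite{CK} (the positive-characteristic analogue of Alperin--Shalen). One chooses finitely many discrete valuation rings $\mathcal{O}_{v_1},\dots,\mathcal{O}_{v_r}$ of the fraction field $E$ of the ring $S$ so that $S\cap\bigcap_{i=1}^{r}\mathcal{O}_{v_i}$ lies in the algebraic closure of $\mathbb{F}_p$ in $E$, which is a \emph{finite} field. The diagonal action of $\mathrm{SL}_n(S)$ on the product of the buildings of the groups $\mathrm{SL}_n(\hat{E}_i)$ (separable because these groups act chamber-transitively, though the buildings need not be locally finite) then has point stabilizers that are not finite but merely \emph{locally finite}, by Lemmas 8.6 and 8.7 of \cite{CK}. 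Since countable locally finite groups have Bredon cohomological dimension at most $1$ for the family of finite subgroups, and finite extensions of locally finite groups are again locally finite, one obtains $\underline{\mathrm{st}}(G,X)\leq 1$ and $\underline{\mathrm{vst}}(G,X)\leq 1$, and Corollary \ref{cor: finite} and Corollary \ref{cor: build} give the explicit bounds $\underline{\mathrm{cd}}(G)\leq 1+r(n-1)$ and $\underline{\underline{\mathrm{cd}}}(G)\leq 2+r(n-1)$. The missing idea in your proposal is precisely this replacement of ``finite stabilizers via discreteness in a locally compact group'' by ``locally finite stabilizers via arithmetic control of $S\cap\bigcap_i\mathcal{O}_{v_i}$''.
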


Lastly, we present an application to the mapping class group  of  any closed, connected, and orientable surface $S_g$ of genus $g\geq 2$. This group  acts   by semi-simple isometries on the completion of the  Teichm\"{u}ller space ${\mathcal{T}}(S_g)$ equipped the Weil-Petersson metric, such that the stabilizer subgroups are finitely generated virtually abelian groups of Hirsch length at most $3g-3$. It follows that the action is also discrete. Since ${\mathcal{T}}(S_g)$ is a separable CAT(0)-space, we obtain the following result.

\begin{cor} \label{cor: intro mapping class group} Let $S_g$ be a closed, connected and orientable surface of genus $g\geq 2$, and let $\mathrm{Mod}(S_g)$ be its mapping class group. Then we have
\[   \underline{\underline{\mathrm{cd}}}(\mathrm{Mod}(S_g)) \leq 9g-8.            \]
\end{cor}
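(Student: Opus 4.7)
The plan is to apply Corollary \ref{cor: intro specific stab}(iii), or equivalently Theorem B directly, to the action of $\mathrm{Mod}(S_g)$ on the Weil--Petersson completion $\overline{\mathcal{T}}(S_g)$ of Teichm\"uller space. The preamble to the statement already indicates all the geometric ingredients, so the proof reduces to bookkeeping: identify the space, verify the hypotheses of Theorem B, and compute the bound.

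First I would take $X = \overline{\mathcal{T}}(S_g)$ endowed with the Weil--Petersson metric. Using the classical results of Masur, Wolpert and Yamada, this is a complete CAT($0$)-space; it is separable because $\mathcal{T}(S_g)$ is a separable manifold of real dimension $6g-6$ and the completion is obtained by attaching lower-dimensional noded strata, so the topological dimension $n$ of $X$ equals $6g-6$. The group $\mathrm{Mod}(S_g)$ acts on $X$ by isometries, and by Daskalopoulos--Wentworth (or by an appeal to \cite[Theorem A]{Bridson} via the piecewise structure, if we use a suitable model) this action is by semi-simple isometries. The action is discrete because point stabilizers are finitely generated and act properly on small balls in the smooth part, a fact ensured by the description of stabilizers via Dehn twists along pinched curves.

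Next I would bound the stabilizers. For any $x \in \overline{\mathcal{T}}(S_g)$, the stabilizer $\mathrm{Mod}(S_g)_x$ is, up to finite index, generated by Dehn twists around the pinched multicurve together with the mapping class group of the resulting noded surface acting trivially on $x$; in particular each $\mathrm{Mod}(S_g)_x$ is virtually abelian (hence virtually polycyclic) of Hirsch length at most $3g-3$, the maximal size of a pants decomposition. This is exactly the stabilizer hypothesis of Corollary \ref{cor: intro specific stab}(iii) with $h = 3g-3$.

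With the hypotheses verified, the bound falls out: Corollary \ref{cor: intro specific stab}(iii) yields
\[
\underline{\underline{\mathrm{cd}}}(\mathrm{Mod}(S_g)) \;\leq\; n + h + 1 \;=\; (6g-6) + (3g-3) + 1 \;=\; 9g-8,
\]
as required. The main obstacle is not the cohomological computation but the geometric verification of the Theorem B hypotheses on $\overline{\mathcal{T}}(S_g)$: namely, that the Weil--Petersson action is by semi-simple isometries with discrete orbits and that stabilizers are virtually abelian of the stated Hirsch length. Each of these assertions requires citing substantial but standard results from Weil--Petersson geometry and Teichm\"uller theory; once they are in place, Theorem B delivers the bound immediately.
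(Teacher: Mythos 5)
Your proposal is correct and follows essentially the same route as the paper: act on the augmented Teichm\"uller space $\overline{\mathcal{T}}(S_g)$ with the Weil--Petersson metric, verify that it is a complete separable CAT(0)-space of dimension $6g-6$ on which the action is discrete and by semi-simple isometries with virtually abelian stabilizers of Hirsch length at most $3g-3$, and apply Corollary \ref{cor: intro specific stab}(iii) to get $(6g-6)+(3g-3)+1=9g-8$. The only differences are matters of citation: the paper attributes semi-simplicity to Bridson's theorem on mapping class group actions on CAT(0) spaces (rather than the polyhedral-isometry result, which does not apply here since $\overline{\mathcal{T}}(S_g)$ is not a complex with finitely many shapes), and it pins down discreteness and the finite-index Dehn-twist description of boundary stabilizers via a result of Hubbard--Koch.
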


\section{Discrete isometric group actions}
Throughout this section, let $X$ be a  metric space and let $G$ be a discrete group acting on $X$ by isometries. For every   $\varepsilon \geq 0$ and each $x \in X$, we denote by $\overline{B(x, \varepsilon)}$ the closure of the open ball $\mathrm{B}(x,\varepsilon)$ with radius $\varepsilon$ centered at $x$.  
The action of an element $g \in G$ on a point $x \in X$ will be denoted by $g \cdot x$ and the associated orbit space  by $G\setminus X$. The action of $G$ on $X$ is called \emph{cocompact} if there exists a compact subset $K$ of $X$ such that $X = \bigcup_{g \in G}g\cdot K$.
\begin{definition} \rm We say that $G$ acts \emph{discretely} on $X$ if for every $x \in X$, the orbit $G\cdot x$ is a discrete subset of $X$.
\end{definition}
The following lemma gives some equivalent definitions of a discrete action.
\begin{lemma}\label{lemma: discrete} The following are equivalent.
\begin{itemize}
\item[(i)] The group $G$ acts discretely on $X$.
\smallskip
\item[(ii)] For every $x \in X$, there exists an $\varepsilon > 0$ such that for all $g \in G$ 
\[ g \cdot \mathrm{B}(x,\varepsilon)\cap \mathrm{B}(x,\varepsilon) \neq \emptyset \Leftrightarrow g \in G_{x}. \]
\smallskip
\item[(iii)]  For every $x \in X$, there exists an $\varepsilon> 0$ such that for every subset $K$ of $X$ that can be covered by finitely many open balls with radius $\varepsilon$,
there exist elements $g_1,\ldots,g_n \in G$ such that
\[  S:=\{ g \in G \ | \ K \cap g \cdot \mathrm{B}(x,\varepsilon)\neq \emptyset \}\subseteq\bigcup_{i=1}^n g_iG_x.\]
\end{itemize}
\end{lemma}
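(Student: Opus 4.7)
The plan is to prove the equivalence via the cycle (i) $\Rightarrow$ (ii) $\Rightarrow$ (iii) $\Rightarrow$ (i). A guiding observation is that (ii) is essentially equivalent to the uniform statement ``there exists $\varepsilon>0$ with $B(x,\varepsilon)\cap G\cdot x=\{x\}$'', because both implications follow immediately from the fact that $G$ acts by isometries and the observation $x\in g\cdot B(x,\varepsilon)$ iff $d(x,g\cdot x)<\varepsilon$. I would record this reformulation at the outset, since every step below relies on it.

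For (i) $\Rightarrow$ (ii): discreteness of $G\cdot x$ as a subspace of $X$ says $x$ is isolated in $G\cdot x$, so I can pick $\varepsilon>0$ with $B(x,2\varepsilon)\cap G\cdot x=\{x\}$. If $g\cdot B(x,\varepsilon)\cap B(x,\varepsilon)\neq \emptyset$ then any common point $y$ satisfies $d(x,y)<\varepsilon$ and $d(y,g\cdot x)=d(g^{-1}y,x)<\varepsilon$, so the triangle inequality gives $g\cdot x\in B(x,2\varepsilon)\cap G\cdot x=\{x\}$, hence $g\in G_x$. The converse direction is trivial because $g\in G_x$ makes $g$ an isometry fixing $B(x,\varepsilon)$.

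For (ii) $\Rightarrow$ (iii): let $\varepsilon_0$ be as in (ii) and set $\varepsilon<\varepsilon_0/4$. Given $K\subseteq \bigcup_{i=1}^n B(y_i,\varepsilon)$, partition $S$ according to which ball $B(y_i,\varepsilon)$ meets $g\cdot B(x,\varepsilon)$. If $g_1,g_2$ both land in the same $B(y_i,\varepsilon)$ then $d(g_j\cdot x,y_i)<2\varepsilon$, so $d(g_1\cdot x,g_2\cdot x)<4\varepsilon<\varepsilon_0$; by the reformulation of (ii), this forces $g_1^{-1}g_2\in G_x$, i.e.\ $g_1G_x=g_2G_x$. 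Picking one representative $g_i$ from each nonempty piece yields the required finite union of cosets. The main technical point here is getting the constants right (a factor of $4$), which is why choosing $\varepsilon<\varepsilon_0/4$ is crucial.

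For (iii) $\Rightarrow$ (i): apply (iii) to the single ball $K=B(x,\varepsilon)$. The resulting set $S$ is contained in finitely many cosets $g_1G_x,\ldots,g_n G_x$, so $\{g\cdot x : g\in S\}$ is finite. Let $r$ be the distance from $x$ to $\{g\cdot x : g\in S\}\setminus\{x\}$ (or $r=\varepsilon$ if this set is empty), and choose $\varepsilon'<\min\{\varepsilon,r/2\}$. Any $g\cdot x\in B(x,\varepsilon')$ satisfies $x\in g\cdot B(x,\varepsilon)\cap B(x,\varepsilon)$, hence $g\in S$; but the distance condition then forces $g\cdot x=x$. Therefore $B(x,\varepsilon')\cap G\cdot x=\{x\}$, showing $x$ is isolated in $G\cdot x$. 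The same argument applied via the isometry action shows every point of $G\cdot x$ is isolated, i.e.\ $G\cdot x$ is discrete. I do not expect any serious obstacle beyond the careful bookkeeping of $\varepsilon$'s; the only subtle point is making clear that ``discrete subspace'' allows accumulation points of $G\cdot x$ in $X\setminus G\cdot x$, so conditions (ii) and (iii) are about local, not global, finiteness.
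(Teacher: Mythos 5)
Your proposal is correct and follows essentially the same route as the paper: the same cycle (i) $\Rightarrow$ (ii) $\Rightarrow$ (iii) $\Rightarrow$ (i), with the triangle inequality on a halved/quartered radius for the first two implications (the paper doubles the radius where you shrink by a factor of $4$; both work) and the same finiteness-of-$\{g\cdot x : g\in S\}$ plus minimum-distance argument for the last. The only cosmetic difference is that you make explicit the reformulation of (ii) as $B(x,\varepsilon)\cap G\cdot x=\{x\}$, which the paper uses implicitly.
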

\begin{proof}
Let us begin by showing that (i) implies (ii). Let $x \in X$. Since $G\cdot x$ is a discrete subset of $X$, there exists an $\varepsilon > 0$ such that $\mathrm{B}(x,2\varepsilon) \cap G\cdot x =\{x\}$. This implies that $g \in G_x$, whenever we have $g \cdot x \in \mathrm{B}(x,2\varepsilon)$. Now assume that $g \cdot \mathrm{B}(x,\varepsilon)\cap \mathrm{B}(x,\varepsilon) \neq \emptyset$ for some $g \in G$. This can only be the case if $d(x,g \cdot x)< 2\varepsilon$. Hence $g \cdot x \in \mathrm{B}(x,2\varepsilon)$ and therefore $g \in G_x$. Conversely, if $g \in G_x$ then one obviously has $g \cdot \mathrm{B}(x,\varepsilon)\cap \mathrm{B}(x,\varepsilon) \neq \emptyset$. This shows that (i) implies (ii).

Next, let us  prove that (ii) implies (iii). Let $x$ be a point of $X$. By (ii), there is an $\varepsilon > 0$ such that for all $g \in G$ 
\[ g \cdot \mathrm{B}(x,2\varepsilon)\cap \mathrm{B}(x,2\varepsilon) \neq \emptyset \Leftrightarrow g \in G_{x}. \]
Let $K$ be a subset of $X$ for which there exist elements $x_1,\ldots,x_m \in  X$ such that
\[ K\subseteq \bigcup_{i=1}^m \mathrm{B}(x_i,\varepsilon).\]
We may assume that $S$ is non-empty, otherwise there is nothing to prove. For each $i \in \{1,\ldots,m\}$ choose, when possible, an element $g_i \in S$ such that $g_i \cdot \mathrm{B}(x,\varepsilon)\cap \mathrm{B}(x_i,\varepsilon) \neq \emptyset$. Note that $g_iG_x \subseteq S$. Now let $g \in S$. Then $g \cdot \mathrm{B}(x,\varepsilon)\cap \mathrm{B}(x_i,\varepsilon) \neq \emptyset$ for some $i$. This implies that there is an  (already chosen) element $g_i \in S$ satisfying  $g_i \cdot \mathrm{B}(x,\varepsilon)\cap \mathrm{B}(x_i,\varepsilon) \neq \emptyset$. It follows that 
\[   \mathrm{B}(g\cdot x,2\varepsilon)\cap \mathrm{B}(g_i \cdot x,2\varepsilon) \neq \emptyset,         \]
and hence $g_i^{-1}g \cdot \mathrm{B}(x,2\varepsilon)\cap \mathrm{B}(x,2\varepsilon) \neq \emptyset$. We conclude that $g \in g_iG_x$. This proves that (ii) implies (iii).

Finally, we will argue that (iii) implies (i). Let $x \in X$.  By (iii), there exists an $\varepsilon>0$ and elements $g_1,\ldots,g_n \in G$ such that
\[  S=\{ g \in G \ | \  \mathrm{B}(x,\varepsilon)\cap g \cdot \mathrm{B}(x,\varepsilon)\neq \emptyset \}\subseteq\bigcup_{i=1}^n g_iG_x.\]
Note that if $S=G_x$ then (i) follows immediately. So, suppose $S$ contains an element that does not fix $x$. We define \[\delta=\frac{1}{2}\min\{  d(g_i \cdot x,x) \ | \  g_i \in G \smallsetminus G_x  \}\] 
and note that $\delta< \varepsilon$. We now claim that $\mathrm{B}(x,\delta)\cap G \cdot x = \{x\}$, Indeed, suppose by a way of contradiction that there exists  $g \in G \smallsetminus G_x$ such that $g\cdot x \in \mathrm{B}(x,\delta)$. Then this implies that $g$ is contained in \[\{ g \in G\smallsetminus G_x \ | \  \mathrm{B}(x,\varepsilon)\cap g \cdot \mathrm{B}(x,\varepsilon)\neq \emptyset \}\] and hence $g \in g_iG_x$ for some $g_i \in G \smallsetminus G_x$. Therefore, we have $g_i\cdot x \in \mathrm{B}(x,\delta)$. But then \[d(g_i\cdot x,x)< \delta = \frac{1}{2}\min\{  d(g_i \cdot x,x) \ | \  i \in \{1,\ldots,n \}  \},\] yielding a contraction. This proves the claim and the lemma. 
\end{proof}

\begin{remark} \rm \label{remark: discrete action}
\begin{enumerate} \item[]
\item It is clear from the proof of Lemma \ref{lemma: discrete} that, if for a given $x \in X$ (ii) is valid for $2\varepsilon >0$ then  (iii) is valid for that  $x$ and the value $\varepsilon$.
\smallskip
\item An isometric group action on a  metric space is proper (in the sense of \cite[I.8.2]{BridHaef}) if and only if it is discrete and has finite point stabilizers.
\end{enumerate}
\end{remark}
The following lemma is a generalization of  Proposition II.6.10(4) in \cite{BridHaef}.
\begin{lemma} \label{lemma: product} Suppose that $G$ acts discretely on the product of metric spaces $X\times Y$ via isometries in $\mathrm{Iso}(X)\times \mathrm{Iso}(Y)$. Let $N$ be a normal subgroup of $G$ consisting of elements that act identically on $X$ under the projection of $\mathrm{Iso}(X)\times \mathrm{Iso}(Y)$ onto $\mathrm{Iso}(X)$. Assume furthermore that $N$ acts cocompactly on $Y$ under the projection $\mathrm{Iso}(X)\times \mathrm{Iso}(Y)$ onto $\mathrm{Iso}(Y)$. Then $G/N$ acts discretely on $X$ under the projection of $\mathrm{Iso}(X)\times \mathrm{Iso}(Y)$ onto $\mathrm{Iso}(X)$, where $gN$ acts as $g$ for all $g \in G$.
\end{lemma}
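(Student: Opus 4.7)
The plan is to verify condition (ii) of Lemma \ref{lemma: discrete} for the induced action of $G/N$ on $X$. Fix $x \in X$ and choose any basepoint $y \in Y$. Note that the kernel of $G \to \mathrm{Iso}(X)$ contains $N$, so the $G/N$-action on $X$ is well defined, and for $g \in G$ the coset $gN$ fixes $x$ iff $g_X \cdot x = x$. So we need to produce a $\delta > 0$ such that $d_X(g_X \cdot x, x) < 2\delta$ forces $g_X \cdot x = x$. The overall strategy is: given $g \in G$ whose $X$-component nearly fixes $x$, we use cocompactness of $N$ on $Y$ to pre-multiply $g$ by some $n \in N$ so that $ng \cdot (x, y)$ lands in a fixed compact target in $X \times Y$, then invoke discreteness of $G$ on $X \times Y$ to conclude that $g_X \cdot x$ is confined to a finite set of candidate points.

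The main preparatory step is as follows. By cocompactness, choose a compact $K \subseteq Y$ containing $y$ with $Y = N \cdot K$. The set $K'' := \{x\} \times K$ is compact in $X \times Y$, hence totally bounded. Apply condition (iii) of Lemma \ref{lemma: discrete} to the discrete $G$-action on $X \times Y$ at the point $(x, y)$ to obtain $\varepsilon > 0$ and finitely many $g_1, \dots, g_k \in G$ such that
\[
S := \bigl\{\, h \in G \ \bigl|\ K'' \cap h \cdot \mathrm{B}\bigl((x,y), \varepsilon\bigr) \neq \emptyset \,\bigr\} \subseteq \bigcup_{i=1}^k g_i\, G_{(x,y)}.
\]
Now, given any $g \in G$ with $d_X(g_X \cdot x, x) < \varepsilon$, pick $n \in N$ with $n_Y g_Y \cdot y \in K$; the plan is to show that $ng \in S$. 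Using $n_X = \mathrm{id}_X$, the image $ng \cdot (x, y) = (g_X \cdot x,\, n_Y g_Y \cdot y)$, and the point $p := (x, n_Y g_Y \cdot y)$ lies in $K''$. A direct computation gives $(ng)^{-1}\cdot p = (g_X^{-1} \cdot x, y)$, whose distance to $(x, y)$ in the product metric equals $d_X(g_X \cdot x, x) < \varepsilon$, so $p \in ng \cdot \mathrm{B}((x,y), \varepsilon)$. Hence $ng \in S$, so $ng = g_i s$ for some $i$ and $s \in G_{(x,y)}$, and projecting to $\mathrm{Iso}(X)$ yields $g_X \cdot x = (g_i)_X \cdot x$.

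Thus the set $\{\, g_X \cdot x : g \in G,\ d_X(g_X \cdot x, x) < \varepsilon \,\}$ is contained in the finite set $F := \{(g_i)_X \cdot x : 1 \le i \le k\}$. Letting $\delta_0 := \min\{\, d_X(p, x) : p \in F, p \neq x \,\} > 0$ (or $+\infty$ if $F = \{x\}$), any $\delta$ with $2\delta < \min(\varepsilon, \delta_0)$ does the job: for such $\delta$, a non-empty intersection $g_X \cdot \mathrm{B}(x, \delta) \cap \mathrm{B}(x, \delta)$ yields $d_X(g_X \cdot x, x) < 2\delta < \varepsilon$, hence $g_X \cdot x \in F$, and since this distance is also $< \delta_0$, necessarily $g_X \cdot x = x$. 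The main technical obstacle is the bookkeeping between the two factors: getting the translate $ng \cdot (x, y)$ honestly into a single compact set whose $\varepsilon$-discretization is controlled, which is precisely where we trade cocompactness on $Y$ against discreteness on $X \times Y$ through the product-metric identity $d((g_X^{-1} x, y),(x, y)) = d_X(g_X^{-1} x, x)$.
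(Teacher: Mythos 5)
Your proof is correct and follows essentially the same strategy as the paper's: use cocompactness of $N$ on $Y$ to replace $g$ by $ng$ whose image of $(x,y)$ lands over a fixed compact set, invoke condition (iii) of Lemma \ref{lemma: discrete} for the $G$-action on $X\times Y$ to confine such elements to finitely many cosets of $G_{(x,y)}$, and then shrink the radius to isolate $x$ in its $G/N$-orbit. Your variant is marginally cleaner in applying (iii) directly to the compact set $\{x\}\times K$ and computing $(ng)^{-1}\cdot(x,\,n_Yg_Y\cdot y)=(g_X^{-1}\cdot x,\,y)$ exactly, whereas the paper works with $\mathrm{B}(x,\mu)\times K$ and an auxiliary choice of $\mu,\delta$; the substance is the same.
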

\begin{proof} We may assume that $G$ is a subgroup of $\mathrm{Iso}(X)\times \mathrm{Iso}(Y)$ and denote an element of $G$ as $(g,\alpha)$ with $g \in \mathrm{Iso}(X)$ and $\alpha\in \mathrm{Iso}(y)$. Elements of $N$ are of the form $(\mathrm{Id}_X, a)$. By cocompactness, we can find a compact subset $K$ of $Y$ such that $Y=\bigcup_{(\mathrm{Id}_X,a)\in N}a\cdot K$. 
Let $(x,y) \in X \times Y$ and let $\varepsilon>0$ be chosen such that $2\varepsilon$ satisfies property (iii) of Lemma \ref{lemma: discrete} for the element $(x,y)\in X \times Y$. Now choose $\mu>0$ and $\delta>0$ small enough such that every subset of $X \times Y$ of the form $B(x,\mu) \times B(z,\delta)$, for $z\in Y$, is contained in some open ball of radius $2\varepsilon$. Because $K$ is compact it can be covered by finitely many open balls of radius $\delta$. Since $(x,y)$ and $2\varepsilon$ satisfy property (iii) of Lemma \ref{lemma: discrete},
there exist elements $(g_1,\alpha_1)\ldots (g_n,\alpha_n) \in G$ such that
\[S:=\Big\{  (g,\alpha) \in G \ | \   \Big(\mathrm{B}(x,\mu)\times K \Big)\cap (g,\alpha)\cdot \mathrm{B}((x,y),2\varepsilon)\neq \emptyset \Big\}  \subseteq \bigcup_{i=1}^n (g_i,\alpha_i) G_{(x,y)}   .         \]
Now, choose $\varepsilon_0 > 0$ such that $\varepsilon_0 \leq \varepsilon$ and $2\varepsilon_0 < d(g_i \cdot x,x) $ for all $i \in \{1,\ldots,n\}$ for which $g_i \cdot x \neq x$.  
Since $Y=\bigcup_{(\mathrm{Id}_X,a)\in N}a\cdot K$, any element of $G/N$ can be represented as $(g,\alpha)N$ such that $\alpha \cdot y \in K$. So, let $(g,\alpha)N \in G/N$  such that $\alpha\cdot y \in K$ and assume that 
\begin{equation}\label{eq: condition} g \cdot \mathrm{B}(x,\varepsilon_0)\cap \mathrm{B}(x,\varepsilon_0) \neq \emptyset. \end{equation}
Since $\varepsilon_0 \leq \varepsilon$, one can easily verify that $$(x, \alpha\cdot y)\in \Big(\mathrm{B}(x,\mu)\times K \Big)\cap (g,\alpha)\cdot \mathrm{B}((x,y),2\varepsilon)$$ and hence $(g,\alpha)\in S$. Thus, there exists  $j \in \{1,\ldots,n\}$ such that $(g,\alpha) \in (g_j,\alpha_j)G_{(x,y)}$. Since by (\ref{eq: condition}) we have $d(g\cdot x,x)< 2\varepsilon_0$, and $2\varepsilon_0 < d(g_i \cdot x,x) $ for all $i\in \{1,\ldots,n\}$ for which $g_i \cdot x \neq x$, we conclude that $g_j\in G_x$. This implies that $(g,\alpha)N \in (G/N)_x$ and completes the proof. 
\end{proof}
Let $S$ be a topological space and let $\mathcal{U}=\{U_{\alpha}\}_{\alpha \in I}$ be an open cover of $S$. The \emph{dimension} $\dim(\mathcal{U}) \in \mathbb{N}\cup \{\infty \}$ of $\mathcal{U}$ is the infimum over all integers $d\geq 0$ such that any finite collection of pairwise distinct elements $U_0,\ldots,U_{d+1}$ of $\mathcal{U}$ has the property that $\cap_{i=0}^{d+1}U_i = \emptyset$. A \emph{refinement} $\mathcal{V}=\{V_{\beta}\}_{\beta \in J}$ of $\mathcal{U}$ is an open cover of $S$ such that for every $V \in \mathcal{V}$ there is an $U \in \mathcal{U}$ with $V \subseteq U$. The \emph{topological dimension} $\dim(S) \in \mathbb{N}\cup \{\infty \}$ of the space $S$ is the infimum over all integers $d\geq 0$ such that any open cover $\mathcal{U}$ of $S$ has a refinement $\mathcal{V}$ with $\dim(\mathcal{V})\leq d$. 

There are several other notions of dimension that can be associated to a topological space $S$. One can for example only consider finite open covers  $\mathcal{U}=\{U_{i}\}_{i=1,\ldots,k}$ of $S$ and their finite refinements $\mathcal{V}=\{V_{j}\}_{j=1,\ldots,r}$ of $\mathcal{U}$. The number $\mathrm{dim}_F(S)$  is then defined to be the infimum over all integers $d\geq 0$ such that any open finite cover $\mathcal{U}$ of $S$ has a finite refinement $\mathcal{V}$ with $\dim(\mathcal{V})\leq d$.  By relaxing finite covers to locally finite covers, one obtains the invariant $\mathrm{dim}_{LF}(S)$. Recall that an open cover $\mathcal{U}$ is locally finite if every point $x \in X$ has an open neighbourhood $V$ such that $V$ intersects only finitely many opens of $\mathcal{U}$. Finally, one can also define the small inductive dimension $\mathrm{ind}(S)$ of $S$ and the large inductive dimension $\mathrm{Ind}(S)$ of $S$. We refer the reader to \cite{Engelking} for the definitions of these invariants. 

For a general topological space, these various notions of dimension can differ. However, in the case of separable metric spaces, it turns out that they all coincide. 
\begin{lemma}\label{lemma: dim} Let $X$ be a paracompact Hausdorff space. Then, we have
\[    \mathrm{dim}_{F}(X) =\mathrm{dim}_{LF}(X)=\mathrm{dim}(X). \]
If, in addition, $X$ is a separable metric space, then
\[   \mathrm{dim}_{F}(X) =\mathrm{dim}_{LF}(X)=\mathrm{dim}(X)=\mathrm{ind}(X)=\mathrm{Ind}(X).  \]

\end{lemma}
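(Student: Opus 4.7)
My plan is to recognize this lemma as a compilation of classical facts in dimension theory and to reduce it to citations from Engelking's monograph \cite{Engelking}. The two halves of the statement (the paracompact Hausdorff half and the separable metric half) are essentially independent and will be handled separately.

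For the paracompact Hausdorff portion I would establish the cyclic chain $\mathrm{dim}(X) \leq \mathrm{dim}_{LF}(X) \leq \mathrm{dim}_F(X) \leq \mathrm{dim}(X)$. The first inequality is a direct consequence of paracompactness: given an arbitrary open cover $\mathcal{U}$, paracompactness supplies a locally finite open refinement $\mathcal{V}$, and any locally finite open refinement of $\mathcal{V}$ of order at most $\mathrm{dim}_{LF}(X)$ is a refinement of $\mathcal{U}$ of the same order. The other two inequalities are more delicate and rest on the shrinking theorem for normal spaces, which applies because every paracompact Hausdorff space is normal. Using the shrinking theorem one can convert an arbitrary open refinement into a locally finite (respectively finite, when starting from a finite cover) open refinement without increasing the order. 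I would cite the relevant statements from Chapters 3 and 7 of \cite{Engelking}.

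For the separable metric portion the additional equalities $\mathrm{dim}(X) = \mathrm{ind}(X) = \mathrm{Ind}(X)$ are the classical Urysohn--Menger--Tumarkin theorem. Specifically, Urysohn's theorem yields $\mathrm{ind}(X) = \mathrm{Ind}(X)$ in the separable metric setting, while the Hurewicz--Wallman theorem identifies both inductive dimensions with the covering dimension. Both are standard material in the first few chapters of \cite{Engelking}, which I would reference directly. Combined with the paracompactness of every metric space, this finishes the proof.

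The main obstacle is not mathematical depth but careful bookkeeping: the definitions of $\mathrm{dim}$, $\mathrm{dim}_F$, and $\mathrm{dim}_{LF}$ given in the paper are phrased in terms of covers and refinements, while Engelking frequently works with the order of open families and with equivalent formulations (finite covers, open shrinkings, etc.). Before invoking his theorems I would have to verify that the paper's definitions coincide with Engelking's so that the cited results really apply. Once this dictionary is in place the proof reduces to assembling the citations and no genuinely new argument is required.
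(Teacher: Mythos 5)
Your proposal is correct and follows essentially the same route as the paper: the easy inequality $\mathrm{dim}(X)\leq\mathrm{dim}_{LF}(X)$ via paracompactness, the hard equality $\mathrm{dim}_{F}(X)=\mathrm{dim}_{LF}(X)$ for normal spaces by citation (the paper uses Dowker's Theorem 3.5), and the coincidence with $\mathrm{ind}$ and $\mathrm{Ind}$ in the separable metric case by citing Engelking (Theorem 1.7.7), after matching his $\dim$ with the paper's $\dim_F$. The only minor divergence is that the paper proves $\mathrm{dim}_F(X)\leq\mathrm{dim}(X)$ by an elementary amalgamation of an arbitrary refinement over the finitely many members of the original cover, which needs no normality or shrinking theorem, whereas you lump this step in with the genuinely delicate Dowker-type direction.
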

\begin{proof} Since paracompact Hausdorff spaces  are normal, it follows from Theorem 3.5 in \cite{dowker} that $\mathrm{dim}_{F}(X) =\mathrm{dim}_{LF}(X)$. Next, let $\mathcal{U}$ be an open cover of $X$. Since $X$ is paracompact, $\mathcal{U}$ has a locally finite refinement $\mathcal{U}_0$. Now,  $\mathcal{U}_0$ and hence $\mathcal U$ has a locally finite refinement $\mathcal{V}$ satisfying $\dim(\mathcal{V})\leq \mathrm{dim}_{LF}(X)$. We conclude that $ \mathrm{dim}(X) \leq  \mathrm{dim}_{LF}(X) $. 

 Next, let $\mathcal{U}=\{U_{i}\}_{i=1,\ldots,k}$ be a finite open cover of $X$. Then $\mathcal{U}$ has a (possibly infinite) refinement $\mathcal{V}=\{V_{\alpha}\}_{\alpha \in I}$ satisfying $\dim(\mathcal{V})\leq \dim(X)$. For each $\alpha \in I$, choose a $w(\alpha) \in \{1,\ldots k\}$ such that $V_{\alpha}\subseteq U_{w(\alpha)}$ and define  for each $j \in \{1,\ldots k\}$,  \[Z_{j}=\bigcup_{\substack{\alpha \in I \\ w(\alpha)=j }}V_{\alpha}.\]  It easily follows that $\mathcal{Z} =\{Z_{j}\}_{j=1,\ldots,k}$ is a finite refinement of $\mathcal{U}$ satisfying $\dim(\mathcal{Z})\leq \dim(\mathcal V)$ and thus $\dim(\mathcal{Z})\leq \dim(X)$. This implies that $\dim_F(X)\leq \dim(X)$. Combining the (in)equalities above yields $\mathrm{dim}_{F}(X) =\mathrm{dim}_{LF}(X)=\mathrm{dim}(X)$.

Now, suppose $X$ is a separable metric space. Then $X$ is paracompact and Hausdorff, hence $\mathrm{dim}_{F}(X) =\mathrm{dim}_{LF}(X)=\mathrm{dim}(X)$.  Moreover, by Theorem 1.7.7 of \cite{Engelking}, we have $\mathrm{dim}_{F}(X)=\mathrm{ind}(X)=\mathrm{Ind}(X)$ (in \cite{Engelking}, the notation $\dim(X)$ is used for what we call $\dim_F(X)$ (see \cite[1.6.7]{Engelking})). This finishes the proof.
\end{proof}
The goal for the rest of this section is to prove the following generalization of Lemma 3.9 in \cite{Luck3}. 
\begin{proposition}\label{prop: G-map} Let $X$ be a separable metric space of topological dimension at most $n \geq 1$. Suppose the group $G$ acts isometrically and discretely on $X$. Then there exists a $G$-CW-complex $Y$ of dimension at most $n$ for which the stabilizers are subgroups of point stabilizers of $X$, together with a $G$-map $f: X \rightarrow Y$.
\end{proposition}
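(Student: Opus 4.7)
The plan is to build $Y$ as the barycentric subdivision of the nerve of a $G$-invariant open cover of $X$, and to define $f$ via a $G$-equivariant partition of unity, generalizing the proper-action argument of Lemma~3.9 in \cite{Luck3}. For each $x \in X$, Lemma~\ref{lemma: discrete} provides a radius $\varepsilon_x > 0$ such that $B(x, \varepsilon_x)$ is a \emph{slice}, meaning $gB(x, \varepsilon_x) \cap B(x, \varepsilon_x) \neq \emptyset$ iff $g \in G_x$, and such that the quantitative local finiteness condition~(iii) of Lemma~\ref{lemma: discrete} holds at $x$ for this radius. Since $X$ is separable and hence Lindel\"of, I extract a countable subcover $\{U_i := B(x_i, \varepsilon_i)\}_{i \in \mathbb{N}}$, yielding the $G$-invariant open cover $\mathcal{U} := \{gU_i : g \in G,\, i \in \mathbb{N}\}$ of $X$.

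The heart of the argument is to refine $\mathcal{U}$ to a $G$-invariant, locally finite open cover $\mathcal{V}$ of dimension at most $n$. Lemma~\ref{lemma: dim} together with paracompactness of $X$ gives a locally finite refinement $\{W_j\}_{j \in \mathbb{N}}$ of $\{U_i\}$ with $\dim \{W_j\} \leq n$ and $W_j \subseteq U_{i(j)}$. Inside each slice $U_i$ one replaces the $W_j$ with $i(j)=i$ by their $G_{x_i}$-saturations in $U_i$, and then $\mathcal{V}$ is defined as the family of all $G$-translates of these saturated pieces. The slice property forces distinct translates $gU_i$ and $g'U_i$ to be either equal (when $g^{-1}g' \in G_{x_i}$) or disjoint, so any intersection of elements of $\mathcal{V}$ reduces to an intersection of original $W_j$'s inside a single translate of a slice, giving $\dim \mathcal{V} \leq n$. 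Local finiteness of $\mathcal{V}$ follows from Lemma~\ref{lemma: discrete}(iii), which bounds the number of $G$-translates of each slice $U_i$ meeting any given bounded subset of $X$.

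With $\mathcal{V}$ in hand, let $Y$ be the barycentric subdivision of the geometric realization of the nerve of $\mathcal{V}$. Then $Y$ is a $G$-CW-complex of dimension at most $n$, and barycentric subdivision guarantees that each simplex stabilizer equals the intersection of the stabilizers of its vertices. Since each vertex of the nerve corresponds to some $g \tilde{W}_j \in \mathcal{V}$ whose stabilizer is contained in $g G_{x_{i(j)}} g^{-1}$, a conjugate of a point stabilizer of $X$, every simplex stabilizer of $Y$ is a subgroup of a point stabilizer of $X$, as required. The $G$-map $f \colon X \to Y$ is then built in the standard way from a $G$-equivariant partition of unity $\{\eta_V\}_{V \in \mathcal{V}}$ subordinate to $\mathcal{V}$ -- constructed by averaging bump functions over $G_{x_i}$-orbits, which is legitimate because condition~(iii) renders the relevant sums locally finite -- via the formula $f(x) = \sum_V \eta_V(x) \cdot v_V$, where $v_V$ denotes the vertex of $Y$ corresponding to $V$.

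The principal obstacle is the $G$-invariant refinement step: taking all $G$-translates of an arbitrary locally finite refinement of $\{U_i\}$ will generically destroy both local finiteness and the dimension bound. The slice property, together with the quantitative discreteness condition~(iii) of Lemma~\ref{lemma: discrete}, is precisely what replaces the properness hypothesis in L\"uck's original argument and permits the construction to go through when the point stabilizers $G_x$ may be infinite.
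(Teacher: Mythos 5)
Your overall architecture (slices from Lemma \ref{lemma: discrete}, a $G$-invariant cover of dimension $\leq n$, its nerve, a partition of unity) matches the paper's, but the step you yourself identify as the heart of the argument --- producing the $G$-invariant refinement --- has a genuine gap. You take a locally finite refinement $\{W_j\}$ of the slices inside $X$ with $\dim\{W_j\}\leq n$ and then replace each $W_j$ by its $G_{x_i}$-saturation $\widetilde W_j=\bigcup_{h\in G_{x_i}}hW_j$, claiming that any intersection of the saturated sets ``reduces to an intersection of original $W_j$'s.'' It does not: a point $y\in\widetilde W_{j_1}\cap\dots\cap\widetilde W_{j_k}$ only gives elements $h_1,\dots,h_k\in G_{x_i}$ with $h_m^{-1}y\in W_{j_m}$, and these are $k$ different points, so no common point of $W_{j_1},\dots,W_{j_k}$ is produced. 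Concretely, on $X=\mathbb{R}$ with $G=\mathbb{Z}/2$ acting by $x\mapsto -x$, the sets $W_1=(-0.4,0.05)$, $W_2=(-0.05,0.2)$, $W_3=(0.15,0.45)$ have no triple intersection, but their saturations all contain the point $0.17$; saturation has raised the dimension of the cover from $1$ to at least $2$. So your $\mathcal{V}$ need not satisfy $\dim\mathcal{V}\leq n$. A related problem affects your partition of unity: averaging a bump function over $G_{x_i}$ is a sum over a possibly infinite group (the stabilizers here are not assumed finite), and Lemma \ref{lemma: discrete}(iii) controls the number of \emph{cosets} of $G_{x_i}$ meeting a bounded set, not the number of group elements contributing nonzero terms, so local finiteness of that sum is not established.

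The paper closes both gaps with one device you do not use: it passes to the quotient. Lemma \ref{lemma: metric space} makes $G\setminus X$ a separable metric space and Lemma \ref{lemma: top dim} (via Engelking's theorem on open maps with discrete fibers) shows $\dim(G\setminus X)=\dim(X)$. The locally finite refinement $\mathcal{U}$ of dimension $\leq n$ is taken \emph{downstairs}, so its preimages $\pi^{-1}(U)$ are already $G$-invariant, and the cover of $X$ is $V_{U,\overline g}=g\cdot B(x_U,2\varepsilon(x_U))\cap\pi^{-1}(U)$; the slice property forces the $U$-indices of any intersecting family to be pairwise distinct, and the dimension bound then comes for free from $\dim(\mathcal{U})\leq n$. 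Likewise the partition of unity is built from functions $e_U\circ\pi$ times a bump $\chi(d(x,g\cdot x_U)/\varepsilon(x_U))$ that depends only on the coset $\overline g\in G/G_{x_U}$, so no averaging over an infinite stabilizer is ever needed. If you want to salvage your version without quotients, you would have to choose the refinement within each slice to be $G_{x_i}$-invariant from the outset, which amounts to refining in $G_{x_i}\setminus U_i$ --- i.e., to the paper's argument. (Your use of barycentric subdivision is harmless but unnecessary: for a cover with the slice property, any simplex of the nerve mapped to itself is automatically fixed pointwise.)
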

First, we need the following lemmas.
\begin{lemma}\label{lemma: metric space} If $G$ acts discretely and isometrically on a metric space $X$ then $G\setminus X$ inherits a metric from $X$, such that the metric topology on $G \setminus X$ coincides with the quotient topology.
\end{lemma}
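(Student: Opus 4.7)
The plan is to define the quotient (pseudo)metric in the standard way and then split the proof into three tasks: (a) verify the defining formula gives a genuine metric, the only nontrivial axiom being separation of points; (b) check that the $\bar d$-open balls coincide with the images of open balls in $X$ under the projection $\pi : X \to G\setminus X$; and (c) deduce the agreement of the two topologies from (b) together with the fact that $\pi$ is an open map. Concretely, I would put
\[ \bar d(G\cdot x, G\cdot y) \;=\; \inf_{g\in G}\,d(x, g\cdot y), \]
which is well defined because $G$ acts by isometries, so the right-hand side is independent of the chosen representatives and is symmetric in $x,y$. The triangle inequality follows from the usual argument: given representatives $x,y,z$ and $\eta>0$, pick $g,h\in G$ with $d(x,g\cdot y)<\bar d(Gx,Gy)+\eta$ and $d(y,h\cdot z)<\bar d(Gy,Gz)+\eta$, and combine $d(x,g\cdot y)+d(g\cdot y, gh\cdot z)$ using the isometry property.

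The only point that genuinely uses the hypothesis is the separation axiom, and this is exactly where the equivalent formulations of discreteness from Lemma \ref{lemma: discrete} come in. Suppose $\bar d(G\cdot x, G\cdot y)=0$; then there exists a sequence $(g_n)$ in $G$ with $g_n\cdot y\to x$. Applying the discreteness of the orbit $G\cdot y$ to the point $y$, I choose $\varepsilon>0$ so that $\overline{B(y,2\varepsilon)}\cap G\cdot y=\{y\}$. For $n,m$ large enough, $d(g_n\cdot y,x)<\varepsilon$ and $d(g_m\cdot y,x)<\varepsilon$, so $d(g_n\cdot y,g_m\cdot y)<2\varepsilon$; applying the isometry $g_n^{-1}$, I obtain $g_n^{-1}g_m\cdot y\in B(y,2\varepsilon)\cap G\cdot y=\{y\}$, hence $g_m\cdot y=g_n\cdot y$. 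Thus the sequence is eventually constant, its common value must equal $x$, and therefore $x\in G\cdot y$, i.e.\ $G\cdot x=G\cdot y$. This is the main obstacle, and the delicate point is that discreteness of the orbit as a subset of $X$ does not a priori guarantee a uniform separation radius; the lemma gets around this by only needing the $\varepsilon$ attached to one point, not a global lower bound.

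For the topological statement, I would first observe that $\pi$ is an open map, since for any open $U\subseteq X$ one has $\pi^{-1}(\pi(U))=\bigcup_{g\in G}g\cdot U$, which is open. Next I verify the identity
\[ \pi\bigl(B(x,r)\bigr) \;=\; \bigl\{\,G\cdot z \;\big|\; \bar d(G\cdot z,G\cdot x)<r\,\bigr\}, \]
directly from the definition of $\bar d$: an orbit $G\cdot z$ lies in the right-hand side iff some translate $g\cdot z$ satisfies $d(g\cdot z,x)<r$, iff $G\cdot z\in\pi(B(x,r))$. Hence the $\bar d$-balls are precisely the $\pi$-images of metric balls in $X$, so they are open in the quotient topology, proving that the metric topology is contained in the quotient topology.

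For the reverse inclusion, let $U\subseteq G\setminus X$ be open in the quotient topology and let $G\cdot x\in U$; then $x\in\pi^{-1}(U)$, which is open, so some $B(x,r)\subseteq\pi^{-1}(U)$, and applying $\pi$ gives the $\bar d$-ball $\pi(B(x,r))\subseteq U$ by the displayed identity. This exhibits every quotient-open neighbourhood of $G\cdot x$ as containing a $\bar d$-ball, so $U$ is open in the metric topology. Combining the two inclusions completes the proof.
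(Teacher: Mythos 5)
Your proposal is correct and follows essentially the same route as the paper: the same quotient pseudo-metric $\bar d(\pi(x),\pi(y))=\inf_{g}d(x,g\cdot y)$, with the separation axiom deduced from discreteness of the orbit by showing that a minimizing sequence $g_n\cdot y$ is eventually constant via the observation that $g_n^{-1}g_m$ eventually stabilizes $y$. The paper leaves the comparison of the metric and quotient topologies as an exercise, which your ball identity $\pi(B(x,r))=\{G\cdot z \mid \bar d(G\cdot z,G\cdot x)<r\}$ settles cleanly.
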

\begin{proof} Let   $\pi: X \rightarrow G \setminus X$ be the natural quotient map and let $\pi(x)$, $\pi(y)  \in G \setminus X$. Define
\[    \overline{d}(\pi(x),\pi(y))=\inf\{ d(x,g\cdot y) \ | \ g \in G \} .    \]
We claim that $\overline{d}$ is a metric on $G \setminus X$. It is clearly a pseudo-metric. Now suppose that $\overline{d}(\pi(x),\pi(y))=0$. This means that there exists a sequence $\{g_n\}_{n \in \mathbb{N}}$ of elements in $G$ such that $\lim_{n \rightarrow \infty}d(x,g_n \cdot y)=0$.  By Lemma \ref{lemma: discrete}, there exists an $\varepsilon > 0$ such that for all $g \in G$ we have $g \cdot \mathrm{B}(y,\varepsilon)\cap \mathrm{B}(y,\varepsilon) \neq \emptyset \Leftrightarrow g \in G_{y}$. On the other hand, by the triangle inequality there exists an $N \in \mathbb{N}$ such that for all $n,m \geq N$, we have $d(y,g_n^{-1}g_m \cdot y)< \varepsilon$ and hence $g_n^{-1}g_m \in G_y$. Therefore, the sequence $\{g_n\cdot y\}_{n \in \mathbb{N}}$ has only finitely many distinct terms showing  that $d(x,g_n \cdot y)=0$ for some $n$. This implies that $\pi(x) = \pi(y) $, so $\overline{d}$ is a metric. It is an easy exercise  to check that the metric topology induced by this metric coincides  with the quotient topology induced by the map $\pi$.
\end{proof}

\begin{lemma}\label{lemma: top dim} 
Let $X$ be a separable metric space. If a group $G$ acts discretely and isometrically  on $X$, then $\mathrm{dim}(G \setminus X)= \mathrm{dim}(X)$.
\end{lemma}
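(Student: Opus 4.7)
The plan is to establish the two inequalities $\dim(G\setminus X)\leq \dim X$ and $\dim X \leq \dim(G\setminus X)$ separately. I begin by observing that $G\setminus X$ is a metric space by Lemma 2.5, and since it is the continuous surjective image of the separable space $X$, it is itself separable. Consequently Lemma 2.6 applies on both sides, so I work with finite open covers throughout and switch freely between the various equivalent notions of dimension.

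For $\dim(G\setminus X)\leq n := \dim X$, I take a finite open cover $\overline{\mathcal U}$ of $G\setminus X$, pull it back to the $G$-invariant finite open cover $\mathcal U = \pi^{-1}(\overline{\mathcal U})$, and for each $x \in X$ choose $\varepsilon_x > 0$ as in Lemma 2.2(ii). Refining a common refinement of $\mathcal U$ with $\{B(x,\varepsilon_x/2) : x \in X\}$ yields a locally finite open cover $\mathcal V$ of $X$ with $\dim \mathcal V \leq n$ in which every $V \in \mathcal V$ lies inside some $B(x_V,\varepsilon_{x_V}/2)$. Let $\overline{\mathcal V} = \{\pi(V) : V \in \mathcal V\}$; this covers $G\setminus X$ and refines $\overline{\mathcal U}$. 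To bound $\dim \overline{\mathcal V} \leq n$, suppose $\pi(V_0),\ldots,\pi(V_{n+1})$ are pairwise distinct members of $\overline{\mathcal V}$ meeting at $\pi(y)$. Picking $g_i \in G$ with $y \in g_i V_i$, the sets $g_i V_i$ are pairwise distinct elements of $G \cdot \mathcal V$ (equality $g_i V_i = g_j V_j$ would put $V_i$ and $V_j$ in a common $G$-orbit and collapse their projections), they all contain $y$, and the smallness condition via Lemma 2.2(ii) forces each $g_j^{-1} g_i$ with $V_i = V_j$ to lie in $G_{x_{V_i}}$; unwinding this yields $n+2$ distinct elements of $\mathcal V$ sharing a common point, contradicting $\dim \mathcal V \leq n$.

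For $\dim X \leq m := \dim(G\setminus X)$, given a finite open cover $\mathcal U$ of $X$, I first pass to a $G$-invariant refinement $\mathcal U'$ of $\mathcal U$ (e.g.\ by taking the common refinement of $\mathcal U$ with $\pi^{-1}\pi(\{B(x,\varepsilon_x/2) : x\in X\})$ and using Lemma 2.2(ii) to ensure the $G$-saturation still refines $\mathcal U$). Projecting $\mathcal U'$ downstairs gives a finite open cover of $G\setminus X$, which admits a refinement $\overline{\mathcal W}$ with $\dim\overline{\mathcal W}\leq m$; the pullback $\pi^{-1}(\overline{\mathcal W})$ is a $G$-invariant open refinement of $\mathcal U'$, hence of $\mathcal U$, and its dimension is bounded by $m$ because any $n+2$ pairwise distinct preimages sharing a point in $X$ would descend to $n+2$ pairwise distinct members of $\overline{\mathcal W}$ sharing a point in $G\setminus X$.

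The main obstacle is controlling the dimension of the $G$-saturation $G\cdot \mathcal V$ in the first direction. This is exactly where Lemma 2.2(ii) is essential: the smallness condition $V\subseteq B(x_V,\varepsilon_{x_V}/2)$ converts a non-empty intersection $gV\cap V$ into the membership $g\in G_{x_V}$, which constrains the way $G$-translates of elements of $\mathcal V$ can overlap at a common point so that the local multiplicity introduced by the action reduces to stabilizer data; this lets the dimension bound for $\mathcal V$ transfer to its push-forward $\overline{\mathcal V}$.
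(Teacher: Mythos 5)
Your plan of proving the two inequalities by pushing covers forward and pulling them back along $\pi$ has genuine gaps in both halves. For $\dim(G\setminus X)\leq n$: the push-forward $\overline{\mathcal V}=\{\pi(V): V\in\mathcal V\}$ of a cover $\mathcal V$ of $X$ with $\dim\mathcal V\leq n$ need \emph{not} have dimension $\leq n$, even when every $V$ lies in a ball $B(x_V,\varepsilon_{x_V}/2)$ as in Lemma \ref{lemma: discrete}(ii). The step that fails is the ``unwinding'': from $n+2$ pairwise distinct translates $g_0V_0,\dots,g_{n+1}V_{n+1}$ containing $y$ you cannot manufacture $n+2$ distinct elements of $\mathcal V$ with a common point, because when the $V_i$ are pairwise distinct the points $g_i^{-1}\cdot y\in V_i$ are in general all different (they merely lie in one $G$-orbit), and Lemma \ref{lemma: discrete}(ii) only speaks to the case $V_i=V_j$. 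Concretely, let $\mathbb{Z}$ act on $\mathbb{R}$ by translations and let $\mathcal V$ contain the pairwise disjoint intervals $V_k=(k-0.1-\delta_k,\,k+0.1+\delta_k)$ for distinct small $\delta_k>0$ (plus intervals filling the gaps): then $\dim\mathcal V=1$ and the smallness condition holds, yet the images $\pi(V_k)$ are pairwise distinct arcs in $\mathbb{R}/\mathbb{Z}$ all containing $\pi(0)$, so $\dim\overline{\mathcal V}$ is unbounded. To make push-forward work you would need $\mathcal V$ to be a \emph{good} $G$-invariant cover in the sense of the proof of Proposition \ref{prop: G-map} (so that $gV\cap V\neq\emptyset\Leftrightarrow gV=V$) with $\dim\mathcal V\le n$, and producing such a cover with that dimension bound is exactly as hard as the lemma itself: the construction in Proposition \ref{prop: G-map} obtains its dimension bound from a cover of $G\setminus X$, i.e.\ it \emph{uses} the present lemma. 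For $\dim X\leq\dim(G\setminus X)$ the problem is more immediate: $\pi^{-1}(\overline W)$ is $G$-saturated, hence contains entire (possibly unbounded) orbits, so the pulled-back cover $\pi^{-1}(\overline{\mathcal W})$ cannot refine an arbitrary cover $\mathcal U$ of $X$; already for $\mathbb{Z}$ acting on $\mathbb{R}$ and $\mathcal U$ a cover by bounded intervals, no nonempty saturated open set is contained in any element of $\mathcal U$. Your proposed repair (saturating a common refinement and invoking Lemma \ref{lemma: discrete}(ii)) does not produce saturated sets contained in elements of $\mathcal U$, for the same reason.

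What you do get right is the reduction to a separable metric quotient via Lemma \ref{lemma: metric space} and Lemma \ref{lemma: dim}; that is also how the paper begins. But the paper then finishes by citing a substantive theorem of dimension theory: $\pi$ is an open surjection of separable metric spaces whose fibres (the orbits) are discrete, and Theorem 1.12.7 of \cite{Engelking} asserts that such maps preserve the small inductive dimension, whence $\mathrm{ind}(G\setminus X)=\mathrm{ind}(X)$ and the claim follows from Lemma \ref{lemma: dim}. That theorem is not recoverable by the elementary cover manipulations above; a self-contained argument would have to reproduce its proof.
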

\begin{proof}
Consider the quotient map $\pi: X \rightarrow G \setminus X$. By Lemma \ref{lemma: metric space}, $G\setminus X$ is a metric space such that the associated metric topology coincides with the quotient topology. Since $X$ is separable and continuous images of separable spaces are separable, $G \setminus X$ is also a separable metric space. Because $\pi$ is an open surjective map such that $G\cdot x$ is a discrete subset of $X$ for each $x \in X$, it follows from Theorem 1.12.7 in \cite{Engelking} that $\mathrm{ind}(G \setminus X)=\mathrm{ind}(X)$. From Lemma \ref{lemma: dim}, we deduce that $\mathrm{dim}(G \setminus X)= \mathrm{dim}(X)$.
\end{proof}
We can now prove Proposition \ref{prop: G-map}.
\begin{proof} Our arguments are  based on the proof of Lemma 3.9 in \cite{Luck3}. We will therefore  argue in detail where it suffices to have a discrete group action instead of a proper action and separable metric space instead of a proper metric space and refer to \cite{Luck3} for more details.

Suppose $\mathcal{V}$ is a G-invariant open cover of $X$ such that every $V \in \mathcal{V}$ satisfies the following condition: there exists a point $x_V \in X$ such that for each $g \in G$
\[ g\cdot V \cap V \neq \emptyset \Leftrightarrow g\cdot V = V \Leftrightarrow g \in G_{x_V}. \]
An open cover satisfying this condition will be called a \emph{good open cover}. The {\it nerve} $\mathcal{N}(\mathcal{V})$ of $\mathcal{V}$ is the simplicial complex whose vertices are the elements of $\mathcal{V}$ and the pairwise distinct vertices $V_0,\ldots,V_d$ span a $d$-simplex if and only if $\cap_{i=0}^d V_i \neq \emptyset$. Since $\mathcal{V}$ is G-invariant, the action of $G$ on $X$ induces a simplicial action of $G$ on $\mathcal{N}(\mathcal{V})$. Note that a $d$-simplex $(V_0,\ldots,V_d)$ is mapped to itself by a group element $g$  if and only if  for each $i \in \{0,\ldots,d\}$ there exists a  $j \in \{0,\ldots,d\}$ such that $g \cdot V_i = V_j$. Since $V_i \cap V_j \neq \emptyset$, we have $g \cdot V_i \cap V_i \neq \emptyset$ and hence $g \in G_{x_{V_i}}$. It follows that all vertices of the simplex $(V_0,\ldots,V_d)$ are fixed by $g$ and so the simplex  is fixed pointwise by $g$. Therefore, $\mathcal{N}(\mathcal{V})$ is a $G$-CW-complex for which the stabilizers are subgroups of point stabilizers of $X$. The $G$-CW-complex that appears in the statement of the proposition will be of this form. The aim is to find a G-invariant good open cover of $X$ that allows one to construct a $G$-map $f: X \rightarrow \mathcal{N}(\mathcal{V})$ and  satisfies $\mathrm{dim}(\mathcal{V})\leq n$. 

By discreteness of the action, for every $x \in X$ there exists   $\varepsilon(x)>0$ such that for every $g \in G$ we have 
\begin{align*} g \cdot {\mathrm{B}(x,2\varepsilon(x))} \cap  {\mathrm{B}(x,2\varepsilon(x))} \neq \emptyset &\Leftrightarrow  g \cdot {\mathrm{B}(x,2\varepsilon(x))}= {\mathrm{B}(x,2\varepsilon(x))}\\ 
&\Leftrightarrow g \cdot \mathrm{B}(x,\varepsilon(x)) = \mathrm{B}(x,\varepsilon(x))\\ &\Leftrightarrow g \in G_x 
\end{align*} 
and such that $\varepsilon(g\cdot x)=\varepsilon(x)$ holds for every $x \in X$ and every $g \in G$. Consider the  quotient map $\pi: X \rightarrow G \setminus X$. By Lemma \ref{lemma: top dim}, we have $\mathrm{dim}(G \setminus X)\leq n$. Note that $\{ \pi(\mathrm{B}(x,\varepsilon(x))) \ | \ x \in X \}$ is an open cover of $G \setminus X$. Since, by Lemma \ref{lemma: metric space}, $G \setminus X$ is a metric space, it is paracompact and Hausdorff. Therefore, by Lemma \ref{lemma: dim}, we can find a locally finite open covering $\mathcal{U}$ of $G \setminus X$ such that $\mathrm{dim}(\mathcal{U})\leq n$ and $\mathcal{U}$ is a refinement of $\{ \pi(\mathrm{B}(x,\varepsilon(x))) \ | \ x \in X \}$. Next, for each $U \in \mathcal{U}$, let $x_U \in X$ such that $U \subseteq  \pi(\mathrm{B}(x_U,\varepsilon(x_U)))$. Define the index set
\[  J=\{  (U,\overline{g}) \ | \ U \in \mathcal{U}, \overline{g} \in G/G_{x_U}               \},            \]
and for each $(U,\overline{g}) \in J$, define the open subset of $X$
\[ V_{U,\overline{g}}=   g \cdot  \mathrm{B}(x_U,2\varepsilon(x_U))  \cap  \pi^{-1}(U). \]
Then it follows that $\mathcal{V}=\{  V_{U,\overline{g}} \ | \ (U,\overline{g}) \in J   \}$  is a G-invariant good open cover of $X$ of $\mathrm{dim}(\mathcal{V})\leq n$.\\
\indent It remains to construct a $G$-map $f: X \rightarrow \mathcal{N}(\mathcal{V})$. To this end, take a locally finite partition of unity $\{e_U : G \setminus X \rightarrow [0,1] \ | \  U \in \mathcal{U}\}$ that is subordinate to $\mathcal{U}$. Fix a map $\chi: [0,\infty) \rightarrow [0,1]$ satisfying $\chi^{-1}(0)=[1,\infty)$ and define for each $(U,\overline{g}) \in J$ the function
\[ \phi_{U,\overline{g}}: X \rightarrow [0,1]: x \mapsto e_U(\pi(x))\chi\Big(d(x,g\cdot x_U))\Big/\varepsilon(x_U)\Big). \]
We claim that the collection $\{ \phi_{U,\overline{g}} \ | \ (U,\overline{g}) \in J  \}$ is locally finite. Let $y \in X$. Because $\mathcal{U}$ is locally finite,  we can find a $\delta>0$ such that $T=\mathrm{B}(\pi(y),\delta)$ intersects only finitely many elements of $\mathcal{U}$, say $U_1,\ldots U_m \in \mathcal{U}$. Let \[\varepsilon_0={\frac{1}{2}}\min\{  \varepsilon(x_{U_i}) \ | \ i \in \{1,\ldots,m \}\}\]and define
\[ W= \mathrm{B}(y,\varepsilon_0)\cap \pi^{-1}(T). \]
It follows that for each $i \in \{1,\ldots,m\}$, there exists  $g_i \in G$ such that    \[ \{ g \in G \ | \ {W} \cap  g \cdot  \mathrm{B}(x_{U_i},\varepsilon(x_{U_i}))\neq \emptyset\} \subseteq g_iG_{x_{U_i}} .\]   This shows that the set 
\[ J_W = \{  (U,\overline{g}) \in J \ | \ {W} \cap  g \cdot  \mathrm{B}(x_U,\varepsilon(x_U))  \cap  \pi^{-1}(U) \neq \emptyset \} \]
is finite. 

Suppose now  $x \in W$ and $\phi_{U,\overline{g}}(x)>0$. This implies that $$x\in {W} \cap  g \cdot  \mathrm{B}(x_U,\varepsilon(x_U))  \cap  \pi^{-1}(U)$$ which in turn shows that $(U,\overline{g}) \in J_W$. Since $J_W$ is finite, this proves the claim. 

It follows that the map
\[ \sum_{(U,\overline{g})\in J}  \phi_{U,\overline{g}} : X \rightarrow [0,1]: x \mapsto \sum_{(U,\overline{g})\in J}e_U(\pi(x))\chi\Big(d(x,g\cdot x_U)\Big/\varepsilon(x_U)\Big) \]
is well-defined and continuous. Moreover, one can check that this map has a value strictly greater than zero for every element $x \in X$. Define for each $(U,\overline{g}) \in J$, the map
\[ \psi_{U,\overline{g}}: X \rightarrow [0,1]: x \mapsto \frac{\phi_{U,\overline{g}}(x)}{  \sum_{(U,\overline{g})\in J}  \phi_{U,\overline{g}}(x)}. \]
Now, the map
\[ f: X \rightarrow \mathcal{N}(\mathcal{V}) : x \mapsto  \sum_{(U,\overline{g})\in J}  \psi_{U,\overline{g}}(x)V_{U,\overline{g}} \]
is the desired $G$-map. 
\end{proof}
\section{Bredon cohomology} \label{sec: bredon}
An important algebraic tool to study classifying spaces for families of subgroups is Bredon cohomology. This cohomology theory was introduced by Bredon in \cite{Bredon} for finite groups 
as a means to develop an obstruction theory for equivariant extension of maps.
It was later generalized to arbitrary groups by L\"{u}ck with applications to finiteness conditions  (see \cite[section 9]{Luck}, \cite{LuckMeintrup} and \cite{Luck1}).
Next, we  recall some basic notions of this theory. For more details, we refer the reader to \cite{Luck} and \cite{FluchThesis}. 

Let $G$ be a discrete group and let $\mathcal{F}$ be a family of subgroups of $G$. The \emph{orbit category} $\orb$ is the category defined by the objects which are the left coset spaces $G/H$ for all $H \in \mathcal{F}$ and the morphisms which are all $G$-equivariant maps between the objects. An \emph{$\orb$-module} is a contravariant functor $M: \orb \rightarrow \mathbb{Z}\mbox{-mod}$. The \emph{category of $\orb$-modules}, denoted by $\orbmod$, is defined by the objects which are all the $\orb$-modules and the morphisms which are all the natural transformations between these objects. A sequence \[0\rightarrow M' \rightarrow M \rightarrow M'' \rightarrow 0\]
in $\orbmod$ is called {\it exact} if it is exact after evaluating in $G/H$ for all $H \in \mathcal{F}$. Let $M \in \orbmod$ and consider the left exact functor
\[ \nathom(M,-) : \orbmod \rightarrow \mathbb{Z}\mbox{-mod}: N \mapsto \nathom(M,N), \]
where $\nathom(M,N)$ is the abelian group of all natural transformations from $M$ to $N$. The module $M$ is a \emph{projective $\orb$-module} if and only if this functor is exact. It can be shown that $\orbmod$ contains enough projective modules to construct projective resolutions. Hence, one can construct functors $\mathrm{Ext}^{n}_{\orb}(-,M)$ that have all the usual properties. The \emph{$n$-th Bredon cohomology of $G$} with coefficients $M \in \orbmod$ is by definition
\[ \mathrm{H}^n_{\mathcal{F}}(G,M)= \mathrm{Ext}^{n}_{\orb}(\underline{\mathbb{Z}},M), \]
where $\underline{\mathbb{Z}}$ is the functor that maps all objects to $\mathbb{Z}$ and all morphisms to the identity map. 

There is a notion of \emph{cohomological dimension of $G$ for the family $\mathcal{F}$}, denoted by $\mathrm{cd}_{\mathcal{F}}(G)$ and defined as
\[ \mathrm{cd}_{\mathcal{F}}(G) = \sup\{ n \in \mathbb{N} \ | \ \exists M \in \orbmod :  \mathrm{H}^n_{\mathcal{F}}(G,M)\neq 0 \}. \]
Using Shapiro's lemma for Bredon cohomology, one can show that $$\mathrm{cd}_{\mathcal{F}\cap H}(H) \leq  \mathrm{cd}_{\mathcal{F}}(G)$$ for any subgroup $H$ of $G$. Since the augmented cellular chain complex of any model for $E_{\mF}G$ yields a projective resolution of $\underline{\mathbb{Z}}$ which can then  be used to compute $\mathrm{H}_{\mF}^{\ast}(G,-)$, it follows that $ \mathrm{cd}_{\mathcal{F}}(G) \leq  \mathrm{gd}_{\mathcal{F}}(G)$. In  \cite[0.1]{LuckMeintrup}, L\"{u}ck and Meintrup prove  that one even has \[\mathrm{cd}_{\mathcal{F}}(G) \leq  \mathrm{gd}_{\mathcal{F}}(G) \leq \max\{3, \mathrm{cd}_{\mathcal{F}}(G) \}.\]

The following lemma shows that   it is possible to reduce the problem  of estimating the Bredon cohomological dimension of a countable group to its finitely generated subgroups. 
\begin{lemma} \label{lemma: finitely generated} Let $G$ be a countable group and let $\mathcal{F}$ be a family of subgroups of $G$ such that each subgroup in $\mathcal{F}$ is contained in a finitely generated subgroup of $G$. Suppose there exists an integer $d\geq 0$ such that $\mathrm{cd}_{\mathcal{F}\cap K}(K) \leq d$ for every finitely generated subgroup $K$ of $G$. Then we have $\mathrm{cd}_{\mathcal{F}}(G)\leq d+1$.
\end{lemma}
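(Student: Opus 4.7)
The plan is to exhibit $G$ as a countable ascending union of finitely generated subgroups and then bound $\mathrm{H}^{*}_{\mathcal{F}}(G,-)$ above degree $d+1$ via a Milnor-type $\lim^{1}$ short exact sequence relating it to the Bredon cohomologies of the finitely generated subgroups.

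Since $G$ is countable, I enumerate its elements $g_{1}, g_{2}, \dots$ and set $K_{i} := \langle g_{1}, \dots, g_{i}\rangle$, so that $K_{1} \subseteq K_{2} \subseteq \cdots$ is an ascending chain of finitely generated subgroups of $G$ with $G = \bigcup_{i} K_{i}$. Using the hypothesis that every $H \in \mathcal{F}$ is contained in a finitely generated subgroup $\langle l_{1}, \dots, l_{m}\rangle$ of $G$, and noting that the $l_{j}$ all eventually appear among $g_{1}, \dots, g_{i}$, I conclude that $H \subseteq K_{i}$ for all sufficiently large $i$. Thus $\mathcal{F}$ is exhausted by the families $\mathcal{F} \cap K_{i}$ in the directed sense.

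Next, for each $i$ I take a projective resolution $P^{(i)}_{\bullet} \to \underline{\mathbb{Z}}_{K_{i}}$ of length at most $d$ in $\mathcal{O}_{\mathcal{F}\cap K_{i}}K_{i}$-modules, and I push it forward to $\orbmod$ via the induction functor $\mathrm{ind}^{G}_{K_{i}}$, which is exact and preserves projectivity. This yields projective $\orb$-resolutions $Q^{(i)}_{\bullet} \to \mathrm{ind}^{G}_{K_{i}} \underline{\mathbb{Z}}_{K_{i}}$ of length at most $d$. By projectivity, the canonical transition maps $\mathrm{ind}^{G}_{K_{i}} \underline{\mathbb{Z}}_{K_{i}} \to \mathrm{ind}^{G}_{K_{i+1}} \underline{\mathbb{Z}}_{K_{i+1}}$ lift to compatible chain maps $Q^{(i)}_{\bullet} \to Q^{(i+1)}_{\bullet}$. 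A direct cofinality argument, using the fact that every $H \in \mathcal{F}$ lies in $K_{i}$ for $i$ large, identifies $\mathrm{colim}_{i} \mathrm{ind}^{G}_{K_{i}} \underline{\mathbb{Z}}_{K_{i}}$ with $\underline{\mathbb{Z}}_{G}$. Forming the mapping telescope of the directed system $(Q^{(i)}_{\bullet})_{i}$ then produces a projective $\orb$-resolution of $\underline{\mathbb{Z}}_{G}$ of length at most $d+1$, where the extra dimension absorbs the $\lim^{1}$. Applying $\nathom(-,M)$ to this telescope and taking cohomology yields, for every $M \in \orbmod$ and every $n$, the short exact sequence
\begin{equation*}
0 \longrightarrow {\lim_{i}}^{1} \mathrm{H}^{n-1}_{\mathcal{F}\cap K_{i}}(K_{i}, \mathrm{res}^{G}_{K_{i}} M) \longrightarrow \mathrm{H}^{n}_{\mathcal{F}}(G, M) \longrightarrow \lim_{i} \mathrm{H}^{n}_{\mathcal{F}\cap K_{i}}(K_{i}, \mathrm{res}^{G}_{K_{i}} M) \longrightarrow 0.
\end{equation*}

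The hypothesis $\mathrm{cd}_{\mathcal{F} \cap K_{i}}(K_{i}) \leq d$ forces both outer terms to vanish whenever $n \geq d+2$, so $\mathrm{H}^{n}_{\mathcal{F}}(G, M) = 0$ in that range and $\mathrm{cd}_{\mathcal{F}}(G) \leq d+1$. The main obstacle is the identification $\mathrm{colim}_{i} \mathrm{ind}^{G}_{K_{i}} \underline{\mathbb{Z}}_{K_{i}} \cong \underline{\mathbb{Z}}_{G}$ in $\orbmod$, together with the verification that the mapping telescope of the $Q^{(i)}_{\bullet}$ is genuinely a projective resolution of $\underline{\mathbb{Z}}_{G}$ of length exactly one more than the individual resolutions; once this is in place, the cohomological dimension bound is an immediate consequence of the vanishing provided by the hypothesis.
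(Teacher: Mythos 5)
Your argument is correct, but it takes a different and more self-contained route than the paper. The paper's proof is three lines: writing $G$ as a countable increasing union of finitely generated subgroups, it builds a tree $T$ that is a one-dimensional model for $E_{\mathcal{SFG}}G$, where $\mathcal{SFG}$ is the family of all subgroups of finitely generated subgroups of $G$ (so $\mathcal{F}\subseteq\mathcal{SFG}$ by hypothesis and the cell stabilizers of $T$ are conjugates of the $K_i$), and then invokes Corollary 4.1 of \cite{DemPetTal}, which bounds $\mathrm{cd}_{\mathcal{F}}(G)$ by $\dim T$ plus the supremum of $\mathrm{cd}_{\mathcal{F}\cap G_\sigma}(G_\sigma)$ over the cell stabilizers $G_\sigma$. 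Your mapping telescope is precisely the algebraic shadow of that tree: before tensoring with the resolutions $Q^{(i)}$, the degree-$0$ and degree-$1$ parts of the telescope are the cellular chains of $T$, and your Milnor $\lim^{1}$ sequence is the two-column spectral sequence of the tree written out explicitly. What your version buys is independence from the external corollary, at the cost of verifying the standard facts that $\mathrm{ind}^{G}_{K_i}$ is exact, preserves projectives, and satisfies the Shapiro adjunction; all of these follow from the formula $(\mathrm{ind}^{G}_{K_i}M)(G/H)\cong\bigoplus_{gK_i\in(G/K_i)^{H}}M\bigl(K_i/(g^{-1}Hg)\bigr)$. Note that this formula is also exactly where the hypothesis that every $H\in\mathcal{F}$ lies in a finitely generated subgroup enters: it guarantees $(G/K_i)^{H}\neq\emptyset$ for all large $i$, so that $\mathrm{colim}_i\,\mathrm{ind}^{G}_{K_i}\underline{\mathbb{Z}}$ evaluates to $\mathbb{Z}$ rather than $0$ at every $G/H$; your cofinality step correctly isolates this point, so I see no gap.
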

\begin{proof}
Denote by $\mathcal{SFG}$ the family of subgroups of finitely generated subgroups of $G$. Since $G$ can be written as a countable increasing union of finitely generated subgroups, 
we can construct a tree $T$ that is a one-dimensional model for $E_{\mathcal{SFG}}G$. Since $\mathcal{F}\subseteq \mathcal{SFG}$, we apply Corollary 4.1 of \cite{DemPetTal} to $T$ and obtain $\mathrm{cd}_{\mathcal{F}}(G)\leq d+1$.
\end{proof}

A chain complex of $\orb$-modules $C$ is said to be \emph{dominated} by a chain complex of  $\orb$-modules $D$ if there exists a chain map $i: C \rightarrow D$ and a chain map $r: D \rightarrow C$ such that $r\circ i$ is chain homotopy equivalent to the identity chain map on $C$. A chain complex of $\orb$-modules $C$ is called $d$-\emph{dimensional} if $C_{k}=0$ for all $k>d$.

The following proposition will come to use in the next section.
\begin{proposition}[{\cite[Proposition 11.10]{Luck}}]\label{prop: dominated} Let $P$ be a chain complex of projective $\orb$-modules. The following are equivalent for every integer $d\geq 0$.
\begin{itemize}
\item[(i)] The chain complex $P$ is dominated by a $d$-dimensional chain complex.
\smallskip
\item[(ii)] The chain complex $P$ is chain homotopy equivalent to a $d$-dimensional projective chain complex.
\smallskip
\item[(iii)] For every $\orb$-module $M$, we have $\mathrm{H}^{d+1}( \nathom(P,M))=0$ and for every integer $k>d$ and $H \in \mathcal{F}$, we have $\mathrm{H}_k(P(G/H))=0$.
\end{itemize}
\end{proposition}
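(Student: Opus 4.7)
The plan is to prove (ii) $\Rightarrow$ (i) $\Rightarrow$ (iii) $\Rightarrow$ (ii), with essentially all the substance concentrated in the last implication. The first arrow is immediate, since a complex is always dominated by itself. For (i) $\Rightarrow$ (iii), suppose chain maps $i \colon P \to C$ and $r \colon C \to P$ witness the domination, with $r \circ i \simeq \mathrm{id}_P$. Applying $\nathom(-, M)$ on one hand and evaluating at $G/H$ on the other, both $\mathrm{H}^{*}(\nathom(P,M))$ and $\mathrm{H}_{*}(P(G/H))$ become retracts of the corresponding groups associated to $C$. Since $C_k = 0$ for $k > d$, both of those vanish in the relevant degrees, yielding the two vanishing conditions in (iii).

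The substance of the proof is (iii) $\Rightarrow$ (ii). I would truncate $P$ by defining the chain complex $P'$ with $P'_k = P_k$ for $k < d$, $P'_k = 0$ for $k > d$, and $P'_d = P_d / \mathrm{im}(d_{d+1})$, with the differentials induced from $P$; the natural projection $q \colon P \to P'$ is a chain map. A direct computation, using $\mathrm{H}_k(P(G/H)) = 0$ for all $k > d$ and $H \in \mathcal{F}$, shows that $q$ is a quasi-isomorphism of $\orb$-modules: below degree $d$ the two complexes agree, and in degree $d$ both have homology $\ker(d_d)/\mathrm{im}(d_{d+1})$.

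The crucial step is to show that $P'_d$ is projective, which is where the cohomological hypothesis in (iii) enters. Starting from the short exact sequence $0 \to \mathrm{im}(d_{d+1}) \to P_d \to P'_d \to 0$ and applying $\nathom(-, M)$, projectivity of $P_d$ gives
\begin{equation*}
\mathrm{Ext}^{1}_{\orb}(P'_d, M) \;\cong\; \nathom(\mathrm{im}(d_{d+1}), M)\,\big/\,\{\text{morphisms extending to } P_d\}.
\end{equation*}
I would then identify this quotient with $\mathrm{H}^{d+1}(\nathom(P, M))$: using the vanishing of $\mathrm{H}_{d+1}(P(G/H))$, the $(d{+}1)$-cocycles are precisely the maps $P_{d+1}\to M$ that vanish on $\ker(d_{d+1}) = \mathrm{im}(d_{d+2})$, hence factor through $P_{d+1}/\ker(d_{d+1}) \cong \mathrm{im}(d_{d+1})$, while the coboundaries correspond to exactly those factorings that extend across $P_d$. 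Thus hypothesis (iii) forces $\mathrm{Ext}^{1}_{\orb}(P'_d, M) = 0$ for every $M$, so $P'_d$ is projective. Once this is established, $q$ is a quasi-isomorphism between bounded-below chain complexes of projectives and hence a chain homotopy equivalence, yielding (ii). The main technical obstacle is the identification of the cokernel above with $\mathrm{H}^{d+1}(\nathom(P, M))$, a careful diagram chase in which the two halves of (iii) must be used in concert.
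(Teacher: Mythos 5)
The paper offers no proof of this proposition at all --- it is quoted verbatim from L\"uck's book \cite{Luck} --- so the only comparison available is with the standard argument, and your proof is correct and is essentially that argument: truncate at degree $d$ by setting $P'_d = P_d/\mathrm{im}(d_{d+1})$, check that the projection is a quasi-isomorphism using the homological half of (iii), and extract projectivity of $P'_d$ from the cohomological half via the identification $\mathrm{Ext}^1_{\orb}(P'_d,M)\cong \mathrm{H}^{d+1}(\nathom(P,M))$, which indeed requires $\mathrm{H}_{d+1}(P(G/H))=0$ exactly where you invoke it. The only cosmetic slip is in (ii) $\Rightarrow$ (i): the correct justification is that $P$ is dominated by the $d$-dimensional projective complex it is chain homotopy equivalent to (the two homotopy equivalences furnish the domination), not that a complex is dominated by itself.
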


A key ingredient for the proof of Theorem B is a general construction of  L\"{u}ck and Weiermann (see \cite{LuckWeiermann}) which relates Bredon cohomology  for a smaller family of subgroups to a larger one. Let us explain this construction  tailored to the case of the families of finite subgroups $\mathcal{F}$ and virtually cyclic subgroups $\mathcal{VC}$. Let $\mathcal S$ denote the set of infinite virtually cyclic subgroups of $G$. As in   \cite[2.2]{LuckWeiermann}, two infinite virtually cyclic subgroups $H$ and $K$ of $\Gamma$ are said to be \emph{equivalent}, denoted $H \sim K$, if $|H\cap K|=\infty$. Using the fact that any two infinite virtually cyclic subgroups of a virtually cyclic group are equivalent (e.g.~see Lemma 3.1. in \cite{DP}), it is easily seen that this indeed defines an equivalence relation on $\mathcal{S}$. One can also verify that this equivalence relation satisfies the following two properties
\begin{itemize}
\item[-]  $\forall H,K \in \mathcal{S} : H \subseteq K \Rightarrow H \sim K$;
\item[-] $ \forall H,K \in \mathcal{S},\forall g \in G: H \sim K \Leftrightarrow H^g \sim K^g$.
\end{itemize}
Let $H \in \mathcal{S}$ and define the group
\[     \mathrm{N}_{G}[H]=\{g \in G \ | \ H^g \sim H \}. \]
The group $ \mathrm{N}_{G}[H]$ is called the commensurator of $H$ in $G$ (also denoted by $\mathrm{Comm}_{G}[H]$). It depends only on the equivalence class $[H]$ of $H$. In particular, the commensurator of $H$ coincides with the commensurator of any infinite cyclic subgroup of $H$. Also note that $H$ is contained in $\mathrm{N}_{G}[H]$.
Denote the family of finite subgroups of $G$ by $\mathcal{F}$ and define for $H \in \mathcal{S}$ the following family of subgroups of $\mathrm{N}_{G}[H]$
\[ \mathcal{F}[H]=\{ K \subseteq \mathrm{N}_{G}[H] \ | K \in \mathcal{S}, K \sim H\} \cup \Big(\mathrm{N}_{G}[H] \cap \mathcal{F}\Big). \]
In other words,  $\mathcal{F}[H]$ contains all finite subgroups of $ \mathrm{N}_{G}[H]$ and all infinite virtually cyclic subgroups of $G$ that are equivalent to $H$. The pushout diagram in Theorem $2.3$ of \cite{LuckWeiermann} yields the following (see also \cite[\S 7]{DP2}).
\begin{proposition}[L\"{u}ck-Weiermann, \cite{LuckWeiermann}] \label{prop: push out} With the notation above, let $[\mathcal{S}]$ denote the set of equivalence classes of $\mathcal S$   and let $\mathcal{I}$ be a complete set of representatives $[H]$ of the orbits of the conjugation action of $G$ on $[\mathcal{S}]$. For every $M \in \mbox{Mod-}\mathcal{O}_{\mathcal{VC}}G$, there exists a long exact sequence
\[ \ldots \rightarrow \mathrm{H}^{i}_{\mathcal{VC}}(G,M) \rightarrow \Big(\prod_{[H] \in \mathcal{I}} \mathrm{H}^{i}_{\mathcal{F}[H] }( \mathrm{N}_{G}[H],M)\Big)\oplus  \mathrm{H}^{i}_{\mathcal{F}}(G,M) \rightarrow  \prod_{[H] \in \mathcal{I}} \mathrm{H}^{i}_{\mathcal{F}\cap  \mathrm{N}_{G}[H] }( \mathrm{N}_{G}[H],M)  \]
\[ \rightarrow \mathrm{H}^{i+1}_{\mathcal{VC}}(G,M) \rightarrow \ldots \ . \]
\end{proposition}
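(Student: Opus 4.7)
The plan is to derive the long exact sequence as the Mayer--Vietoris sequence associated to the $G$-pushout of $G$-CW-complexes furnished by Theorem 2.3 of \cite{LuckWeiermann}. Instantiated with the equivalence relation $\sim$ on $\mathcal{S}$ recorded just above the statement, that theorem produces a cocartesian square of $G$-CW-complexes whose top horizontal arrow
\[
\coprod_{[H]\in\mathcal{I}} G\times_{N_{G}[H]} E_{\mathcal{F}\cap N_{G}[H]}(N_{G}[H]) \longrightarrow E_{\mathcal{F}}G
\]
is a $G$-cofibration, whose left vertical arrow is induced by the family inclusions $\mathcal{F}\cap N_{G}[H]\subseteq \mathcal{F}[H]$, and whose bottom right corner serves as a model for $E_{\mathcal{VC}}G$.

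Before invoking that theorem I would verify its two hypotheses on the equivalence relation $\sim$, which are precisely the two bulleted properties ($H\subseteq K\Rightarrow H\sim K$ and $G$-invariance) displayed earlier; these are already checked in the excerpt. I would also use that every infinite virtually cyclic subgroup of $G$ is equivalent to some $G$-conjugate of a unique $H$ with $[H]\in\mathcal{I}$, which follows because $\mathcal{I}$ is by definition a full set of representatives for the $G$-action on $[\mathcal{S}]$, and that the indexing over $\mathcal{I}$ is what produces the product decomposition in the long exact sequence.

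From the pushout one obtains a short exact sequence of augmented cellular chain complexes of $\orb$-modules (for $\mathcal{F}=\mathcal{VC}$), and applying $\nathom(-,M)$ yields the desired Mayer--Vietoris long exact sequence for any $M\in\mbox{Mod-}\mathcal{O}_{\mathcal{VC}}G$. The Bredon cohomology of the induced $G$-CW-complexes appearing in the two coproducts is then identified via the Bredon analogue of Shapiro's lemma: since the orbit $G/K$ of any subgroup $K\subseteq N_{G}[H]$ is $G$-homeomorphic to $G\times_{N_{G}[H]}(N_{G}[H]/K)$, the induction--restriction adjunction gives
\[
H^{i}_{\mathcal{F}}\bigl(G,\, \mathrm{map}_{G}(G\times_{N_{G}[H]}E_{\mathcal{F}\cap N_{G}[H]}(N_{G}[H]),\,M)\bigr) \;\cong\; H^{i}_{\mathcal{F}\cap N_{G}[H]}(N_{G}[H],\,\mathrm{res}\,M),
\]
and similarly for the $\mathcal{F}[H]$ side.

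The main technical obstacle is the careful bookkeeping of the two families on each $N_{G}[H]$, together with checking that the restriction of the $\mathcal{O}_{\mathcal{VC}}G$-module $M$ lands in the appropriate orbit category on each side so that the Shapiro-type identification is valid as stated. Once these identifications are in place and the hypotheses of \cite[Theorem 2.3]{LuckWeiermann} have been verified, the claimed long exact sequence is exactly the Mayer--Vietoris sequence of the square above, so no further computation is required.
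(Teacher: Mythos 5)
Your proposal is correct and follows essentially the same route as the paper, which simply cites the $G$-pushout of Theorem 2.3 in L\"{u}ck--Weiermann (after noting that the equivalence relation on $\mathcal{S}$ satisfies the two required properties) and reads off the Mayer--Vietoris sequence, with the induction terms identified via the Bredon version of Shapiro's lemma. No discrepancy to report.
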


\section{Proofs of Theorems A and B}
Throughout this section, let $G$ be a discrete group and let $X$ be a CAT(0)-space on which $G$ acts by isometries.
We refer the reader to \cite{BridHaef} for the definition and properties of CAT(0)-spaces. 

We start by proving Theorem A. To this end, assume also that $G$ acts discretely on $X$ and that $X$ is separable  and of topological dimension $n$.
\begin{proof}[Proof of Theorem A]
By Proposition \ref{prop: G-map}, there exists an $n$-dimensional $G$-CW-complex $Y$ with stabilizers that are subgroups of point stabilizers of $X$ and a $G$-map $f: X \rightarrow Y$. Let $J_{\mathcal{F}}G$ be the terminal object in the $G$-homotopy category of $\mathcal{F}$-numerable $G$-spaces (see \cite[section 2]{Luck2}). We claim that there also exists a $G$-map $\varphi: E_{\mathcal{F}}G \rightarrow X \times J_{\mathcal{F}}G$. Assuming this, consider the following composition of $G$-equivariant maps 
\[  E_{\mathcal{F}}G  \xrightarrow{\varphi} X \times J_{\mathcal{F}}G \xrightarrow{f\times \mathrm{Id}}   Y \times  J_{\mathcal{F}}G \xrightarrow{\mathrm{Id}\times\alpha}  Y \times  E_{\mathcal{F}}G \xrightarrow{\pi_2}   E_{\mathcal{F}}G, \]
where $\alpha:  J_{\mathcal{F}}G  \rightarrow  E_{\mathcal{F}}G$ is the $G$-homotopy inverse of the $G$-homotopy equivalence $E_{\mathcal{F}}G  \rightarrow  J_{\mathcal{F}}G$ (see Theorem 3.7 of \cite{Luck2}), and $\pi_2$ is projection onto the second factor. We obtain G-maps such that their composition
\begin{equation} \label{eq: composition}        E_{\mathcal{F}}G \rightarrow  Y \times E_{\mathcal{F}}G \rightarrow E_{\mathcal{F}}G,               \end{equation}
 is $G$-homotopic to the identity map by the universal property of $E_{\mathcal{F}}G$. Using the equivariant cellular approximation theorem (see Theorem II.2.1 of \cite{tom}), we may assume that these maps are cellular and that their composition is $G$-homotopic via cellular maps to the identity map.\\
\indent Let $C_{\ast}( E_{\mathcal{F}}G)$ and $C_{\ast}(Y)$ be the associated cellular chain complexes of $E_{\mathcal{F}}G$ and $Y$. Then $C_{\ast}( E_{\mathcal{F}}G)\otimes C_{\ast}(Y)=C_{\ast}( E_{\mathcal{F}}G\times Y)$ where $C_{\ast}( E_{\mathcal{F}}G\times Y) $ is the cellular chain complex of $E_{\mathcal{F}}G\times Y$. The maps in (\ref{eq: composition}) induce $\mathcal{O}_{\mathcal{F}}G$-chain maps
\begin{equation*}        C_{\ast}(E_{\mathcal{F}}G) \rightarrow  C_{\ast}(E_{\mathcal{F}}G \times Y)  \rightarrow C_{\ast}(E_{\mathcal{F}}G),               \end{equation*}
such that the composition is chain homotopy equivalent to the identity chain map on $ C_{\ast}(E_{\mathcal{F}}G)$. Note that $C_{\ast}( E_{\mathcal{F}}G\times Y)$ is a chain complex of free $\mathcal{O}_{\mathcal{F}}G$-modules and that $C_{\ast}( E_{\mathcal{F}}G) \rightarrow \underline{\mathbb{Z}}$ is a free resolution of $\underline{\mathbb{Z}}$. In the proof of Proposition 3.2 of \cite{DemPetTal}, the authors construct a convergent spectral sequence
\[ E_{1}^{p,q}= \prod_{\sigma \in \Sigma_p}\mathrm{H}^q_{\mathcal{F}\cap G_{\sigma}}(G_{\sigma},M) \Rightarrow \mathrm{H}^{p+q}(\mathrm{Hom}_{\mathcal{F}}( C_{\ast}(E_{\mathcal{F}}G)\otimes C_{\ast}(Y),M)) \]
for every  $\orb$-module $M$, where $\Sigma_p$ is a set of  representatives of all the $G$-orbits of  $p$-cells of $Y$ and $G_{\sigma}$ is the stabilizer of $\sigma$. Since $Y$ is $n$-dimensional and  $\mathrm{cd}_{\mathcal{F}\cap G_{\sigma}}(G_{\sigma})\leq d$ for each $\sigma$, we conclude from the spectral sequence that $$\mathrm{H}^{n+d+1}(\mathrm{Hom}_{\mathcal{F}}( C_{\ast}( E_{\mathcal{F}}G\times Y),M))=0$$ for every  $\orb$-module $M$. Also, for each $H \in \mathcal{F}$, the complex $C_{\ast}(E_{\mathcal{F}}G)(G/H)\twoheadrightarrow \mathbb{Z}$ is exact and $C_{k}(Y)(G/H)$ is $\mathbb{Z}$-free for $k \geq 0$. Combining these observations with the fact that $C_{\ast}(Y)$ is $n$-dimensional, a double complex spectral sequence argument shows that $$\mathrm{H}_k( C_{\ast}( E_{\mathcal{F}}G\times Y)(G/H))=0$$ for every $H \in \mathcal{F}$ and every $k > n$.
We conclude from Proposition \ref{prop: dominated} that $ C_{\ast}( E_{\mathcal{F}}G\times Y)$ is chain homotopy equivalent to an $(n+d)$-dimensional projective chain complex $Z$. Therefore, there exist chain maps $i: C_{\ast}( E_{\mathcal{F}}G) \rightarrow Z $ and $r: Z \rightarrow C_{\ast}( E_{\mathcal{F}}G)$ such that $r \circ i$ is chain homotopy equivalent to the identity chain map on $C_{\ast}( E_{\mathcal{F}}G)$. Hence  $C_{\ast}( E_{\mathcal{F}}G)$ is dominated by an $(n+d)$-dimensional chain complex $Z$. It follows from Proposition \ref{prop: dominated} that $C_{\ast}( E_{\mathcal{F}}G)$ is chain homotopy equivalent to an $(n+d)$-dimensional projective chain complex $P$. But then $P \rightarrow \underline{\mathbb{Z}}$ is a projective $(n+d)$-dimensional $\mathcal{O}_{\mathcal{F}}G$-resolution of $\underline{\mathbb{Z}}$. This implies that $\mathrm{cd}_{\mathcal{F}}(G)\leq n+d$. \\
\indent It remains to prove the claim that there exists a $G$-map $\varphi: E_{\mathcal{F}}G \rightarrow X \times J_{\mathcal{F}}G$. It suffices to show that $ X \times J_{\mathcal{F}}G$ is a model for $J_{\mathcal{F}}G$. The existence of the desired map will then follow from the universal property of $J_{\mathcal{F}}G$, since $E_{\mathcal{F}}G$ is an $\mathcal{F}$-numerable $G$-space. 

A standard fact which follows directly from the definition of an $\mathcal{F}$-numerable $G$-space is that if there is a $G$-map between $G$-spaces with an $\mathcal{F}$-numerable  target then the source is also $\mathcal{F}$-numerable. Hence, $X \times J_{\mathcal{F}}G$ is an $\mathcal{F}$-numerable $G$-space by virtue of the projection onto the second coordinate $X \times J_{\mathcal{F}}G \rightarrow J_{\mathcal{F}}G$. We equip the product $(X \times J_{\mathcal{F}}G) \times (X \times J_{\mathcal{F}}G)$ with the diagonal $G$-action and denote the projection of 
$(X \times J_{\mathcal{F}}G) \times (X \times J_{\mathcal{F}}G)$ onto the $i$-th factor $X \times J_{\mathcal{F}}G$ by $\mathrm{pr}_i$, for $i=1,2$. By Theorem 2.5(ii) of \cite{Luck2}, $X \times J_{\mathcal{F}}G$ is a model for $J_{\mathcal{F}}G$ if and only if each $H \in \mathcal{F}$ is contained in a point stabilizer of $X \times J_{\mathcal{F}}G$ and $\mathrm{pr}_1$ and $\mathrm{pr}_2$ are $G$-homotopic. Let $G$ act diagonally on the product $J_{\mathcal{F}}G \times  J_{\mathcal{F}}G$ and denote the projection of 
$J_{\mathcal{F}}G \times J_{\mathcal{F}}G$ onto the $i$-th factor $J_{\mathcal{F}}G$ by $\mathrm{p}_i$, for $i=1,2$. It follows from Theorem 2.5(ii) of \cite{Luck2} that each subgroup in $\mathcal{F}$ is contained in a point stabilizer of $J_{\mathcal{F}}G$ and that $\mathrm{p}_1$ and $\mathrm{p}_2$ are $G$-homotopic via a $G$-map $Q: I \times J_{\mathcal{F}}G \times J_{\mathcal{F}}G \rightarrow J_{\mathcal{F}}G$ with $Q_{|t=0}=\mathrm{p}_1$ and $Q_{|t=1}=\mathrm{p}_2$. Let $H \in \mathcal{F}$. Then $(X \times J_{\mathcal{F}}G)^H=X^H \times J_{\mathcal{F}}G^H$ is non-empty. Hence, $H$ is contained in a point stabilizer of $X \times J_{\mathcal{F}}G$.  Let $G$ act diagonally on $X \times X$  and denote the projection of $X \times X$ onto the $i$-th factor $X$ by $\mathrm{q}_i$, for $i=1,2$. By Proposition II.1.4 of  \cite{BridHaef}, each pair of points $x,y \in X$ can be joined by a unique geodesic, i.e.~an isometry $\gamma_{x,y}: [0,L] \rightarrow X$ with $\gamma_{x,y}(0)=x$, $\gamma_{x,y}(L)=y$, and this geodesic varies continuously with its endpoints. By rescaling  $\gamma_{x,y}$ to a map $\gamma_{x,y}': [0,1] \rightarrow X$, we obtain a $G$-map
\[ R:I\times X \times X \rightarrow X : (t,x,y)\mapsto \gamma_{x,y}'(t)\]
with  $R_{|t=0}=\mathrm{q}_1$ and $R_{|t=1}=\mathrm{q}_2$.  But then the map
\begin{eqnarray*}
S: I \times (X \times J_{\mathcal{F}}G) \times (X \times J_{\mathcal{F}}G) & \rightarrow & X \times J_{\mathcal{F}}G :\\
(t,x,a,y,b)&\mapsto & (R(t,x,y),Q(t,a,b))      
\end{eqnarray*}
is a $G$-map with $S_{|t=0}=\mathrm{pr}_1$ and $S_{|t=1}=\mathrm{pr}_2$. This proves the claim and finishes the proof.
\end{proof}
From this point onward, we impose the additional assumption that the CAT(0)-space $X$ is complete. 

Before turning to the proof of Theorem B, let us first recall the definitions of semi-simple, elliptic and hyperbolic isometries. Let $g$ be an element of $G$, and hence an isometry of $X$. The \emph{displacement function} $d_{g}$ is the function
\[ d_{g}: X \rightarrow \mathbb{R}^{+}: x \mapsto d(g \cdot x,x). \]
The \emph{translation length} of $g$ is the number
\[ |g|=\inf\{d_{g}(x) \ | \ x \in X\}. \]
The space $\mathrm{Min}(g)$ is the set of all points $x \in X$ for which $d_{g}(x)=|g|$. The element $g$ is called \emph{semi-simple} if $\mathrm{Min}(g)$ is non-empty. In this case, $\mathrm{Min}(g)$ is a closed convex subset of $X$ and therefore also a complete CAT(0)-space (see Proposition II.6.2(3) of \cite{BridHaef}). 

The following proposition extends Proposition II.6.10(2) of  \cite{BridHaef} to discrete  actions.
\begin{proposition}\label{prop: cocompact} If the group $G$ acts discretely and cocompactly on a proper metric space $M$, then every element of $G$ acts as a semi-simple isometry.
\end{proposition}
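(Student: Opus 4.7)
The plan is to adapt the classical Bridson--Haefliger proof of II.6.10(2), replacing the finiteness input from properness of the action by the uniform discreteness of orbits that Lemma \ref{lemma: discrete} affords. Fix $g \in G$, set $|g|=\inf_{x \in M} d(g\cdot x,x)$, and choose a minimizing sequence $x_n \in M$ with $d(g \cdot x_n, x_n) \to |g|$. Using cocompactness, pick a compact $K \subseteq M$ with $G\cdot K = M$ and elements $h_n \in G$ such that $y_n := h_n \cdot x_n \in K$. By compactness of $K$, after passing to a subsequence we may assume $y_n \to y$ for some $y \in K$. Set $g_n := h_n g h_n^{-1}$. Since $h_n$ is an isometry, $d(g_n \cdot y_n, y_n) = d(g\cdot x_n, x_n) \to |g|$, and from $|d(g_n\cdot y, y)-d(g_n \cdot y_n, y_n)|\leq 2\, d(y, y_n)$ we also conclude that $d(g_n\cdot y, y) \to |g|$.

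The crucial step is to show that the sequence $\{g_n \cdot y\}_{n} \subseteq G \cdot y$ takes only finitely many distinct values. By Lemma \ref{lemma: discrete}(ii), there is $\varepsilon > 0$ such that $h \cdot B(y, \varepsilon) \cap B(y, \varepsilon) = \emptyset$ for every $h \in G \smallsetminus G_y$. This forces $d(h\cdot y, y) \geq 2\varepsilon$ whenever $h\cdot y \neq y$, and applying the same statement to $g_2^{-1}g_1$ shows that any two distinct points of the orbit $G \cdot y$ are at distance at least $2\varepsilon$ apart. Since $d(g_n \cdot y, y) \to |g|$, the points $g_n \cdot y$ eventually lie in the closed ball $\overline{B(y, |g|+1)}$, which is compact by properness of $M$. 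A $2\varepsilon$-separated subset of a compact set is finite, so $\{g_n \cdot y\}_n$ has finitely many distinct values.

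Passing to a further subsequence, we may assume $g_n \cdot y = g_0 \cdot y$ for all $n$, where $g_0 := g_{n_0} = h_{n_0} g h_{n_0}^{-1}$ for some fixed index $n_0$. Taking the limit of $d(g_n \cdot y, y)$ then gives $d(g_0 \cdot y, y) = |g|$. Since $g_0$ is conjugate to $g$ its translation length equals $|g|$, so $y \in \mathrm{Min}(g_0)$, and applying the isometry $h_{n_0}^{-1}$ yields $h_{n_0}^{-1} \cdot y \in \mathrm{Min}(g)$. In particular $\mathrm{Min}(g) \neq \emptyset$, so $g$ is semi-simple. The main obstacle — and the only place where the argument departs from the classical one — is the finiteness of $\{g_n \cdot y\}$, which in the proper-action setting is immediate from the local finiteness of the action, but in our setting requires the uniform separation of orbits supplied by Lemma \ref{lemma: discrete} combined with properness of $M$.
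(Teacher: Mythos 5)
Your proposal is correct and follows essentially the same argument as the paper: translate a minimizing sequence into a compact set via cocompactness, conjugate $g$ accordingly, pass to a limit point $y$, and use discreteness of the orbit together with properness (compactness of closed balls) to conclude that the conjugated translates of $y$ take only finitely many values, whence the infimum is attained. The only cosmetic difference is that you extract finiteness from the uniform $2\varepsilon$-separation of the orbit given by Lemma \ref{lemma: discrete}(ii), while the paper first passes to a convergent subsequence and then invokes discreteness; both are valid.
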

\begin{proof} By cocompactness, we can find a compact subset $K$ of $M$ such that $\bigcup_{g \in G}g\cdot K = M$. Fix $g \in G$ and let $\{x_n\}$ be a sequence of elements in $M$ such that $\lim_{n \rightarrow \infty}d(g\cdot x_n,x_n)=|g|$. Choose a sequence of elements $\{y_n\}$ in $K$ such that there exists a sequence of group elements $\{g_n\}$ for which $g_n \cdot x_n = y_n$, for each $n\geq 0$. By the compactness of $K$, we can pass to subsequences if necessary and assume that $\lim_{n\rightarrow \infty}y_n = y \in K$. Denote $h_n=g_ngg_n^{-1}$ and note that $d(h_n\cdot y_n,y_n)=d(g\cdot x_n,x_n)$, for each $n\geq 0$. Since,
\begin{eqnarray*}
0 \leq d(h_n\cdot y,y)-|g| &\leq & d(h_n \cdot y,h_n \cdot y_n) +d(h_n\cdot y_n,y)- |g| \\
                      &\leq & d(h_n \cdot y,h_n \cdot y_n) +d(h_n\cdot y_n,y_n)+d(y_n,y)- |g| \\
                      &\leq & 2d(y,y_n)+d(g\cdot x_n,x_n)-|g| 
\end{eqnarray*}
for each $n$, we have $\lim_{n \rightarrow \infty} d(h_n\cdot y,y)=|g|$. 
It follows that the sequence $\{h_n \cdot y\}$ is contained in a closed ball centered at $y$. Hence, by passing to subsequences if necessary, we may assume that $ \{h_n\cdot y\}$ converges. By discreteness of the action, this implies that the sequence $\{h_n \cdot y\}$ has only finitely many distinct values. Again by passing to a subsequence, we may assume that $h_n\cdot y=h_0\cdot y$ for all $n$. We now have
\[ d(gg_0^{-1}\cdot y,g_0^{-1}\cdot y)= d(h_0\cdot y, y)= \lim_{n \rightarrow \infty} d(h_n\cdot y,y)=|g|. \]
This means that $g_0^{-1}\cdot y \in \mathrm{Min}(g)$, hence $g$ is a semi-simple isometry. 
\end{proof}

\begin{definition} \rm Let $g \in G$ be semi-simple. The element $g$, viewed as an isometry of $X$, is called
\begin{itemize}
\item[-] \emph{elliptic} if $|g|=0$, i.e.~$g$ has a fixed point;
\item[-] \emph{hyperbolic} if $|g|>0$, i.e.~$g$ has no fixed point.
\end{itemize}
\end{definition}
It follows that if  $g^m \in G$ is elliptic (hyperbolic) for some integer $m \neq 0$, then $g$ is also elliptic (hyperbolic) (see Proposition II.6.7 and Theorem II.6.8(2) of \cite{BridHaef}).

\begin{proposition}\label{prop: first one} Let $G$ be a discrete group and let $X$ be a complete separable CAT(0)-space of topological dimension $n$ on which $G$ acts discretely and isometrically. Let $\mathcal{F}$ be the family of finite subgroups of $G$.
If $H$ is an infinite cyclic subgroup of $G$ generated by an elliptic element, then \[\mathrm{cd}_{\mathcal{F}[H]}(\mathrm{N}_{G}[H])\leq n+\max\{\ms(G, X),\mms(G, X)\}.\]
\end{proposition}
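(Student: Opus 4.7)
The plan is to apply Theorem A to the action of $\mathrm{N}_G[H]$ on $X$ with the family $\mathcal{F}[H]$. This requires verifying two things: every $K\in\mathcal{F}[H]$ has a nonempty fixed-point set $X^K$, and for each $x\in X$ the stabilizer $S_x := \mathrm{N}_G[H]\cap G_x$ satisfies $\mathrm{cd}_{\mathcal{F}[H]\cap S_x}(S_x) \le \max\{\ms(G,X), \mms(G,X)\}$. Once these are in place the conclusion is immediate from Theorem A.

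For the first verification, finite subgroups fix a point by the Bruhat–Tits/Cartan theorem (Corollary II.2.8(1) of \cite{BridHaef}). For an infinite virtually cyclic $K$ with $K\sim H$, the first step is to show every element of $K$ is elliptic: any finite-index infinite cyclic subgroup $\langle k\rangle\le K$ must share a finite-index subgroup with $H$, so $k^a=h^{\pm b}$ for some $a,b\neq 0$, forcing $k$ and hence every element of $K$ to be elliptic (Proposition II.6.7 of \cite{BridHaef}). Let $L\trianglelefteq K$ be the normal core of such $\langle k\rangle$: it is infinite cyclic of finite index in $K$, contained centrally in $C_K(L)$, and $[K:C_K(L)]\le 2$. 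Apply Cartan's theorem to the finite group $C_K(L)/L$ acting on the nonempty complete CAT(0)-subspace $X^L$ to obtain a $C_K(L)$-fixed point $p$; if $[K:C_K(L)]=2$, the midpoint of $p$ and $g\cdot p$ for $g\in K\setminus C_K(L)$ is $K$-fixed.

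For the stabilizer bound, split into cases by whether some $h^\ell\in G_x$ with $\ell\ne 0$. In the negative case, any infinite virtually cyclic $K\in \mathcal{F}[H]\cap S_x$ would contain $h^\ell\in K\le G_x$ for some $\ell\neq 0$, contradicting our assumption; so $\mathcal{F}[H]\cap S_x$ consists only of finite subgroups and $\mathrm{cd}_{\mathcal{F}[H]\cap S_x}(S_x)=\underline{\mathrm{cd}}(S_x)\le \underline{\mathrm{cd}}(G_x)\le \ms(G,X)$. In the positive case, let $m$ be minimal with $h^m\in G_x$; then $\langle h^m\rangle\subseteq S_x$ and every $K$ in the family shares a finite-index infinite cyclic subgroup with $\langle h^m\rangle$. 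Here I produce a model for $E_{\mathcal{F}[H]\cap S_x}(S_x)$ by taking the fixed-point subcomplex for $\langle h^m\rangle$ inside a classifying space $\underline{\underline{E}}(S_x)$: it is nonempty and contractible since $\langle h^m\rangle\in \mathcal{VC}$, and a subgroup $K\le S_x$ fixes a point of this subcomplex precisely when $\langle K,\langle h^m\rangle\rangle$ is virtually cyclic, which matches membership in $\mathcal{F}[H]\cap S_x$. This yields the bound $\underline{\underline{\mathrm{cd}}}(S_x)\le \underline{\underline{\mathrm{cd}}}(G_x)\le \mms(G,X)$.

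The main obstacle will be the construction in this last case, since $\langle h^m\rangle$ need not be normal in $S_x$; the fixed-point set of $\langle h^m\rangle$ inside $\underline{\underline{E}}(S_x)$ only inherits an action of $N_{S_x}(\langle h^m\rangle)$, not of all of $S_x$. The resolution will be to work inside a suitable subgroup where normality holds (for example, $C_{S_x}(h^m)$, which has $\langle h^m\rangle$ centrally placed), deal with the finite quotient separately via the same midpoint/averaging argument as in part (i), and verify the fixed-set description carefully. With the stabilizer bound established in both cases, Theorem A immediately gives $\mathrm{cd}_{\mathcal{F}[H]}(\mathrm{N}_G[H])\le n+\max\{\ms(G,X), \mms(G,X)\}$.
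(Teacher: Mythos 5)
Your skeleton matches the paper's: apply Theorem A to the action of $\mathrm{N}_G[H]$ on $X$ with the family $\mathcal{F}[H]$, verify the fixed-point hypothesis, and bound the stabilizer terms by splitting on whether $\mathcal{F}[H]\cap S_x$ contains infinite subgroups. Your fixed-point verification is correct, though more elaborate than necessary: since $H\cap K=\langle h^m\rangle$ has finite index in $K$ and fixes a point $x$ with $h\cdot x=x$, the orbit $K\cdot x$ is finite and Corollary II.2.8(1) of \cite{BridHaef} applies directly, with no need for normal cores, centralizers, or midpoints. Your first stabilizer case is also fine.

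The second stabilizer case contains a genuine gap, and it is exactly the step the paper delegates to Lemma 6.3 of \cite{DP2}. Your proposed model, the $\langle h^m\rangle$-fixed subcomplex of $\underline{\underline{E}}(S_x)$, fails for two independent reasons. First, the non-normality problem you flag is not repairable the way you suggest: the normalizer $N_{S_x}(\langle h^m\rangle)$ (and a fortiori $C_{S_x}(h^m)$) can have \emph{infinite} index in the commensurator $S_x$ --- this is precisely the phenomenon, already visible in Baumslag--Solitar groups, that forces the L\"uck--Weiermann machinery to be built on commensurators rather than normalizers --- so there is no ``finite quotient'' left over to handle by averaging. Second, even ignoring equivariance, your fixed-set description is wrong: an infinite cyclic $K\le S_x$ with $K\sim H$ need not have $\langle K,h^m\rangle$ virtually cyclic, since two commensurable infinite cyclic subgroups can generate a large group (in $BS(2,3)=\langle a,t\mid ta^2t^{-1}=a^3\rangle$, the subgroups $\langle a\rangle$ and $\langle tat^{-1}\rangle$ are equivalent but generate a copy of the trefoil group $\langle x,y\mid x^3=y^2\rangle$). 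Such a $K$ has empty fixed-point set in your subcomplex, so the subcomplex is not a model for $E_{\mathcal{F}[H]\cap S_x}(S_x)$. (A smaller issue: a geometric model only bounds $\mathrm{gd}$, not $\mathrm{cd}$, so even if it existed you would lose the $\max\{3,\cdot\}$ correction.) The paper instead notes that in this case one may assume $H\le G_x$, so that $\mathrm{N}_G[H]_x=\mathrm{N}_{G_x}[H]$, and then quotes Lemma 6.3 of \cite{DP2} --- an algebraic consequence of the L\"uck--Weiermann long exact sequence --- to get $\mathrm{cd}_{\mathcal{F}[H]\cap S_x}(S_x)\le\max\{\underline{\mathrm{cd}}(G_x),\underline{\underline{\mathrm{cd}}}(G_x)\}$. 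Some input of this kind is indispensable here, and your construction does not supply it.
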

\begin{proof} We claim that $X^{S}$ is non-empty and contractible, for every $S \in \mathcal{F}[H]$. First of all, Corollary II.2.8(1) of \cite{BridHaef} gives us that $X^{F}$ is non-empty and convex for every finite subgroup of $\mathrm{N}_{G}[H]$. So, let $K$ be an infinite virtually cyclic subgroup of $\mathrm{N}_{G}[H]$ that is equivalent to $H$. Let $h$ be an elliptic element of $G$ that generates $H$. Because $H \sim K$, there exists a non-zero integer $m$ such that $\langle h^m \rangle$ is a finite index subgroup of $K$. Since $h$ is elliptic, we can find an $x \in X$ such that $h\cdot x = x$. But then the orbit of $x$ under the action of $K$ is finite. It follows from Corollary II.2.8(1) of \cite{BridHaef} that $X^{K}$ is non-empty and convex. This proves our claim.  

Now,  Theorem A applied to the group $\mathrm{N}_{G}[H]$ acting on $X$ and the family $\mathcal{F}[H]$  implies that $\mathrm{cd}_{\mathcal{F}[H]}(\mathrm{N}_{G}[H]) \leq n + p$, where
\[ p=\max\{  \mathrm{cd}_{\mathcal{F}[H]\cap \mathrm{N}_{G}[H]_x}(\mathrm{N}_{G}[H]_x)   \ | \ x \in X      \}.\]
To prove the proposition, it remains to show that $p \leq \max\{\ms(G, X), \mms(G, X)\}$. 

Let $x \in X$. We distinguish between two cases. 
First, assume that $\mathcal{F}[H]\cap \mathrm{N}_{G}[H]_x$ coincides with the family of finite subgroups of $ \mathrm{N}_{G}[H]_x$. In this case, we have 
$$\mathrm{cd}_{\mathcal{F}[H]\cap \mathrm{N}_{G}[H]_x}(\mathrm{N}_{G}[H]_x) =\underline{\mathrm{cd}}( \mathrm{N}_{G}[H]_x)\leq \underline{\mathrm{cd}}(G_x) \leq \ms(G, X).$$ Secondly, assume that $\mathcal{F}[H]\cap \mathrm{N}_{G}[H]_x$ is different from the family of finite subgroups of $ \mathrm{N}_{G}[H]_x$. Clearly, it contains the family of finite subgroups of $ \mathrm{N}_{G}[H]_x$ and by assumption it must also contain an infinite cyclic subgroup $K$ that is equivalent to $H$. Since $\mathrm{N}_{G}[H]$ only depends on $H$ up to equivalence, we may as well assume that $H$ is contained in $\mathrm{N}_{G}[H]_x$. But then, we have
\[  \mathrm{N}_{G}[H]_x=  \mathrm{N}_{G}[H]\cap G_x =\mathrm{N}_{G_x}[H]\]
and  $\mathrm{cd}_{\mathcal{F}[H]\cap \mathrm{N}_{G}[H]_x}(\mathrm{N}_{G}[H]_x)= \mathrm{cd}_{(\mathcal{F}\cap\mathrm{N}_{G_x}[H]) [H]}(\mathrm{N}_{G_x}[H])$. It now follows from Lemma 6.3 of \cite{DP2} that \[ \mathrm{cd}_{\mathcal{F}[H]\cap \mathrm{N}_{G}[H]_x}(\mathrm{N}_{G}[H]_x)\leq  \max\{\underline{\mathrm{cd}}(\mathrm{N}_{G_x}[H]), \underline{\underline{\mathrm{cd}}}(\mathrm{N}_{G_x}[H])\},\]
which implies that 
\[ \mathrm{cd}_{\mathcal{F}[H]\cap \mathrm{N}_{G}[H]_x}(\mathrm{N}_{G}[H]_x)\leq \max\{ \underline{\mathrm{cd}}(G_x),\underline{\underline{\mathrm{cd}}}(G_x)\}. \]
This proves that $p \leq  \max\{\ms(G, X), \mms(G, X)\}$.
\end{proof}
Before proceeding, we refer to Theorem II.6.8 in \cite{BridHaef} for the definition and basic properties of an axis of a hyperbolic element.
\begin{proposition}\label{prop: second one} Let $G$ be a countable discrete group and let $X$ be a complete separable CAT(0)-space of topological dimension $n$  on which $G$ acts discretely and isometrically. Let $H$ be an infinite cyclic subgroup of $G$ generated by a hyperbolic element, then \[\mathrm{cd}_{\mathcal{F}[H]}(\mathrm{N}_{G}[H])\leq n+\mv(G, X).\]
\end{proposition}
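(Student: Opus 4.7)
The plan is to exploit the axis decomposition of $h$ and reduce to an application of Theorem A on the transverse CAT(0)-factor.

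Since $h$ is hyperbolic, its minimal set $\mathrm{Min}(h) \subseteq X$ is non-empty, closed, and convex, and by \cite[II.6.8]{BridHaef} splits isometrically as $\mathrm{Min}(h) = Y \times \mathbb{R}$, where $h$ acts trivially on $Y$ and by translation by $|h|$ on the $\mathbb{R}$-factor. The factor $Y$ is a complete separable CAT(0)-space of topological dimension at most $n-1$. Every $g \in N := \mathrm{N}_G[H]$ commensurates $H$, hence preserves $\mathrm{Min}(h^m) = \mathrm{Min}(h)$ for an appropriate $m \neq 0$, and so preserves the $Y \times \mathbb{R}$ splitting and acts componentwise. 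Taking $M$ to be the kernel of the projection $N \to \mathrm{Iso}(Y)$, one has $H \subseteq M$, so $M$ acts cocompactly on $\mathbb{R}$. Lemma \ref{lemma: product} then yields that $N$ acts discretely on $Y$.

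Next I verify the hypotheses of Theorem A for $N$ acting on $Y$ with the family $\mathcal{F}[H]$. A finite $S \subseteq N$ has a fixed point in the complete CAT(0)-space $\mathrm{Min}(h)$, whose projection to $Y$ is $S$-fixed. An infinite virtually cyclic $S \sim H$ contains some $h^m$ with $m \neq 0$ which acts trivially on $Y$, so the image of $S$ in $\mathrm{Iso}(Y)$ is finite and fixes a point of $Y$. Theorem A then gives
\[ \mathrm{cd}_{\mathcal{F}[H]}(N) \leq (n-1) + \sup_{y \in Y} \mathrm{cd}_{\mathcal{F}[H] \cap N_y}(N_y), \]
and the proposition reduces to showing $\mathrm{cd}_{\mathcal{F}[H] \cap N_y}(N_y) \leq \mv(G,X) + 1$ for every $y \in Y$. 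To analyze the stabilizer, I observe that $N_y$ acts discretely and cocompactly on the line $\{y\} \times \mathbb{R}$; the kernel $K_y$ of this action fixes the line pointwise, so $K_y \subseteq G_{(y,t)}$, and the quotient $N_y / K_y$ is a discrete cocompact subgroup of $\mathrm{Iso}(\mathbb{R})$ containing translations, hence isomorphic to $\mathbb{Z}$ or to the infinite dihedral group. Each point stabilizer $(N_y)_t$ is then an extension of $K_y$ by a subgroup of $\mathbb{Z}/2$, and so belongs to $\mathcal{E}(G,X)$; in particular $\underline{\mathrm{cd}}((N_y)_t) \leq \mv(G,X)$. Since $K_y$ has finite index in $(N_y)_t$, the restriction $\mathcal{F}[H] \cap N_y \cap (N_y)_t$ coincides with the family of finite subgroups of $(N_y)_t$, so applying Theorem A to $N_y$ acting on $\mathbb{R}$ with the family of finite subgroups yields $\underline{\mathrm{cd}}(N_y) \leq 1 + \mv(G,X)$.

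The main obstacle is upgrading this to a bound on $\mathrm{cd}_{\mathcal{F}[H] \cap N_y}(N_y)$: Theorem A cannot be invoked directly with the family $\mathcal{F}[H] \cap N_y$ on the line $\{y\} \times \mathbb{R}$, since every infinite virtually cyclic subgroup of $N_y$ equivalent to $H$ contains a translation with no fixed point in $\mathbb{R}$, causing the hypothesis $\mathbb{R}^S \neq \emptyset$ to fail. To bridge this gap, I would apply the L\"uck--Weiermann framework of Proposition \ref{prop: push out} to $N_y$ for the single equivalence class $[H]$, noting that $\mathrm{N}_{N_y}[H] = N_y$ since every element of $N_y \subseteq N$ commensurates $H$. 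Analyzing the resulting long exact sequence, which relates $\mathrm{H}^\ast_{\mathcal{F}[H] \cap N_y}(N_y, -)$ to Bredon cohomology with respect to the family of finite subgroups, and combining with the estimate $\underline{\mathrm{cd}}(N_y) \leq 1 + \mv(G,X)$, one obtains $\mathrm{cd}_{\mathcal{F}[H] \cap N_y}(N_y) \leq \mv(G,X) + 1$. Substituting this back yields the desired inequality $\mathrm{cd}_{\mathcal{F}[H]}(\mathrm{N}_G[H]) \leq (n-1) + (\mv(G,X) + 1) = n + \mv(G,X)$.
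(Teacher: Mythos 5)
Your overall architecture (split $\mathrm{Min}(h)=Y\times\mathbb{R}$, apply Theorem A to the $Y$-factor, analyse line stabilizers) is close in spirit to the paper's, but two steps do not go through as written. First, $\mathrm{N}_G[H]$ need not act on $\mathrm{Min}(h)$. An element $g\in\mathrm{N}_G[H]$ only satisfies $g^{-1}h^{l}g=h^{\pm l}$ for some $l\neq 0$ depending on $g$, so $g$ preserves $\mathrm{Min}(h^{l})$; but $\mathrm{Min}(h^{l})$ merely contains $\mathrm{Min}(h)$ and can be strictly larger, and when $\mathrm{N}_G[H]$ is not finitely generated there need not be a single exponent $m$ working for all $g$ simultaneously. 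This is exactly why the paper works with an arbitrary finitely generated subgroup $K\leq\mathrm{N}_G[H]$ containing $H$: there one can choose one $m$ with $g^{-1}h^mg=h^{\pm m}$ for all $g\in K$, replace $H$ by $\langle h^m\rangle$ (which changes neither $[H]$ nor $\mathcal{F}[H]$), obtain $H$ normal in $K$ with $K$ acting on $\mathrm{Min}(h^m)=Y\times\mathbb{R}$, and at the very end assemble the bound for the countable group $\mathrm{N}_G[H]$ via Lemma \ref{lemma: finitely generated}; that is where the extra $+1$ (absorbed by $\dim Y\leq n-1$) comes from.

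Second, your bridge from $\underline{\mathrm{cd}}(N_y)\leq 1+\mv(G,X)$ to $\mathrm{cd}_{\mathcal{F}[H]\cap N_y}(N_y)\leq 1+\mv(G,X)$ does not work as proposed: since every element of $N_y$ commensurates $H$, one has $\mathrm{N}_{N_y}[H]=N_y$, so the L\"uck--Weiermann pushout for the pair of families $\mathcal{F}\cap N_y\subseteq\mathcal{F}[H]\cap N_y$ is degenerate (both horizontal maps are equivalences) and the resulting long exact sequence carries no information; it cannot convert a bound for the family of finite subgroups into one for $\mathcal{F}[H]\cap N_y$. The tool the paper uses instead is Lemma 4.2 of \cite{DP2}: once a power of $h$ generates a \emph{normal} subgroup $H$ of $K$, one has $\mathrm{cd}_{\mathcal{F}[H]\cap K}(K)=\underline{\mathrm{cd}}(K/H)$ outright, and then Theorem A is applied to the quotient $K/H$ acting (discretely, by Lemma \ref{lemma: product}) on $Y$, with the (subgroup of $G_x$)-by-(finite dihedral) structure of the point stabilizers of $K/H$ giving the bound by $\mv(G,X)$. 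The repair of both gaps is therefore the same: restrict to finitely generated subgroups containing $H$, normalize a power of $h$, quotient by it, and only then run the Theorem A argument on $Y$ --- which is precisely the paper's proof.
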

\begin{proof}
Let $h$ be a hyperbolic element of $G$ that generates $H$ and let $g \in \mathrm{N}_{G}[H]$. By definition, there exist non-zero integers $l$ and $m$ such that $g^{-1}h^lg=h^m$. Proposition II.6.2(2) of \cite{BridHaef} implies that $|h^l|=|h^m|$. By applying Proposition II.6.2(4) and Theorem II.6.8(1) of \cite{BridHaef} to an axis of $h$, it follows that $|h^k|=\pm|h|k$ for all $k \in \mathbb{Z}$. We deduce that $l=\pm m$. 

Next, let $K$ be a finitely generated subgroup of $\mathrm{N}_{G}[H]$ that contains $H$. We can find a non-zero integer $m$ such that $g^{-1}h^mg=h^{\pm m}$ for all $g \in K$. By replacing $H=\langle h \rangle $ with $H=\langle h^m \rangle$, we may assume that $m=1$. Hence, $H$ is a normal subgroup of $K$ and $\mathrm{cd}_{\mathcal{F}[H]\cap K}(K)= \underline{\mathrm{cd}}(K/H)$ by Lemma 4.2 of  \cite{DP2}. 

Since $g^{-1}hg=h^{\pm 1}$ for all $g \in K$ and $\mathrm{Min}(h)=\mathrm{Min}(h^{-1})$, it follows from Proposition II.6.2(2) of \cite{BridHaef} that $K$ acts on $\mathrm{Min}(h)$. Moreover, $K$ maps an axis of $h$ to an axis of $h$. 

It follows from Theorem II.2.14 and Proposition I.5.3(4) of \cite{BridHaef} that there exists a complete separable $\mathrm{CAT}(0)$-subspace $Y$ of $X$ such that $\mathrm{Min}(h)$ is isometric to $Y \times \mathbb{R}$ and $K$ acts on $\mathrm{Min}(h)=Y \times \mathbb{R}$ via discrete isometries in $\mathrm{Iso}(Y)\times \mathrm{Iso}(\mathbb{R})$. Since $H$ acts by non-trivial translations on each axis, it acts identically on  $Y$ via the projection of $\mathrm{Iso}(Y)\times \mathrm{Iso}(\mathbb{R})$ onto $\mathrm{Iso}(Y)$, and it acts cocompactly on $\mathbb{R}$  via the projection of $\mathrm{Iso}(Y)\times \mathrm{Iso}(\mathbb{R})$ onto $\mathrm{Iso}(\mathbb{R})$. 

Let $x \in Y$ and consider the point stabilizer $K_x$ of the action of $K$ on $Y$ given by the projection of $\mathrm{Iso}(Y)\times \mathrm{Iso}(\mathbb{R})$ onto $\mathrm{Iso}(Y)$. The projection of $\mathrm{Iso}(Y)\times \mathrm{Iso}(\mathbb{R})$ onto $\mathrm{Iso}(\mathbb{R})$, maps $K_x$  onto a discrete subgroup of $\mathrm{Iso}(\mathbb{R})$ that acts discretely and cocompactly on $\mathbb{R}$, i.e. onto a subgroup $Q$ of the infinite dihedral group $D_{\infty}$. The kernel of this map is a subgroup $N$ of the point stabilizer $G_x$. Hence, we have a short exact sequence
\[   1 \rightarrow N \rightarrow K_x \rightarrow Q \rightarrow 1.      \]
Since $H$ is a normal subgroup of $K$ that acts identically on $Y$ and cocompactly on $\mathbb{R}$, by Lemma \ref{lemma: product}, $K/H$ acts isometrically and discretely  on $Y$ which is a  complete separable CAT(0)-space. Moreover, by Theorem 2 of \cite{Morita}, the topological dimension of $Y$ is at most $n-1$. The  point stabilizers  $(K/H)_x$ of the action of $K/H$ on $Y$ are of the form
\[   1 \rightarrow N \rightarrow (K/H)_x \rightarrow Q/H \rightarrow 1,     \]
where $N$ is a subgroup $G_x$ and $Q/H$ is a subgroup of a finite dihedral group. It follows from Theorem A applied to the family of finite subgroups of $K/H$ that $\underline{\mathrm{cd}}(K/H)\leq n-1+\mv(G, X)$. Hence, we have $\mathrm{cd}_{\mathcal{F}[H]\cap K}(K)\leq n-1+\mv(G, X)$  and therefore, by Lemma \ref{lemma: finitely generated}, $\mathrm{cd}_{\mathcal{F}[H]}(\mathrm{N}_{G}[H])\leq n+\mv(G, X)$.
\end{proof}
We can now prove Theorem B.
\begin{proof}[Proof of Theorem B]  Let $\mF$ be the family of finite subgroups of $G$ , let $\mathcal{VC}$ be the family of virtually cyclic subgroups of $G$ and denote by $\mathcal S$ the set of infinite virtually cyclic subgroups of $G$. Let $[H]$ be the equivalence class represented by $H \in \mathcal{S}$ and let $\mathcal{I}$ be a complete set of representatives $[H]$ of the orbits of the conjugation action of $G$ on $[\mathcal{S}]$. Note that we may assume that each class $[H]$ in $\mathcal{I}$ is represented by an infinite cyclic group $H$, generated by either an elliptic element or a hyperbolic element. By Proposition \ref{prop: push out}, for every $M \in \mbox{Mod-}\mathcal{O}_{\mathcal{VC}}G$ we have
\[ \ldots \rightarrow \mathrm{H}^{i}_{\mathcal{VC}}(G,M) \rightarrow \Big(\prod_{[H] \in \mathcal{I}} \mathrm{H}^{i}_{\mathcal{F}[H] }( \mathrm{N}_{G}[H],M)\Big)\oplus  \mathrm{H}^{i}_{\mathcal{F}}(G,M) \rightarrow  \prod_{[H] \in \mathcal{I}} \mathrm{H}^{i}_{\mathcal{F}\cap  \mathrm{N}_{G}[H] }( \mathrm{N}_{G}[H],M)  \]
\[ \rightarrow \mathrm{H}^{i+1}_{\mathcal{VC}}(G,M) \rightarrow \ldots \ . \]
\indent By the preceding two propositions, we have $$\mathrm{cd}_{\mathcal{F}[H]}(\mathrm{N}_{\Gamma}[H])\leq n+ \max\{\mms(G, X), \mv(G, X)\}$$
 for each $[H] \in \mathcal{I}$. Also,  from Theorem A we can deduce that $$\mathrm{cd}_{\mathcal{F}\cap  \mathrm{N}_{\Gamma}[H]  }(\mathrm{N}_{\Gamma}[H])\leq \mathrm{cd}_{\mathcal{F}}(G) \leq n+\ms(G, X),$$ for each $[H] \in \mathcal{I}$. It then follows from the long exact cohomology sequence that $$\mathrm{H}^{i}_{\mathcal{VC}}(G,M)=0 \;\; \mbox{ for all }\;\; i > n+\max\{\mms(G, X), \mv(G, X)+1\}$$ and for every $M \in \mbox{Mod-}\mathcal{O}_{\mathcal{VC}}G$, which finishes  the proof. 
\end{proof}
\section{Applications}
\subsection{Actions with specific point stabilizers} Let us first recall the following facts. By \cite[1.2]{Dunwoody}, a virtually free group $G$ acts simplicially on a simplicial tree with finite stabilizers. Hence, by Corollary \ref{cor: finite}, we have $\underline{\mathrm{cd}}(G)\leq 1$  and by Theorem B,  $\underline{\underline{\mathrm{cd}}}(G)\leq 2$. 

Next, let $G$ be a virtually polycyclic group of Hirsch length $h$. In \cite[5.26]{Luck2} it is shown that $\underline{\mathrm{cd}}(G)= h$, and in \cite[5.13]{LuckWeiermann} it is proven that $\underline{\underline{\mathrm{cd}}}(G)\leq h+1$. 

Finally, let $G$ be a countable elementary amenable group of Hirsch length $h$. From \cite{FloresNuc}, it follows that $\underline{\mathrm{cd}}(G)\leq h+1$, and in \cite[Theorem A]{DP2}, the authors show that $\underline{\underline{\mathrm{cd}}}(G)\leq h+2$. Also note that the classes of virtually free, virtually polycyclic and elementary amenable groups are closed under taking subgroups and finite extensions.

Now, suppose $G$ is a group that acts on a topological space $X$. Then, by the above stated results, it readily follows that
\[\mv(G, X)  \leq 1 \ \;\;\; \mbox{and} \;\;\; \   \mms(G, X)\leq 2\]
if $G_x$
is virtually free for each $x \in X$,
\smallskip

\[\mv(G, X)  \leq  h   \ \;\;\; \mbox{and} \;\;\; \ \ \mms(G, X)  \leq h+1\]
if $G_x$
is virtually polycyclic of Hirsch length at most $h$ for each $x \in X$, and
\smallskip

\[\mv(G, X)   \leq h+1  \  \;\;\; \mbox{and} \;\;\;  \ \mms(G, X)  \leq h+2\]
if $G_x$
is elementary amenable of Hirsch length at most $h$ for each $x \in X$.
\smallskip

Using Corollary \ref{cor: finite} and Theorem B, we deduce Corollary \ref{cor: intro specific stab}.
\subsection{Generalized Baumslag-Solitar groups} Generalized Baumslag-Solitar groups are fundamental groups of finite graphs of groups where all vertex and edge groups are infinite cyclic. In \cite{Kropholler}, P. Kropholler characterized non-cyclic generalized Baumslag-Solitar groups as those finitely generated groups that are of cohomological dimension two and have an infinite cyclic subgroup that intersects all of its conjugates non-trivially. Clearly, the well-known Baumslag-solitar groups 
\[BS(m,n)=\langle x,y \ | \ xy^{m}x^{-1}=y^{n} \rangle \]
are examples of generalized Baumslag-Solitar groups. 

Given an arbitrary generalized Baumslag-Solitar group $G$, we will determine $\underline{\underline{\mathrm{cd}}}(G)$. The result will depend on whether or not $G$ contains a copy of $\mathbb{Z}^2$. 
We will need the following  property of the generalized Baumslag-Solitar groups (see \cite[2.5]{Forester}).
\begin{lemma} \label{lem: elliptic} Suppose $G$ acts on a simplicial tree such that all vertex and edge stabilizers are infinite cyclic. Then any two elliptic elements of $G$ generate equivalent infinite cyclic subgroups of $G$, and $\mathrm{N}_G[\langle h \rangle]=G$ for every elliptic element $h \in G$.
\end{lemma}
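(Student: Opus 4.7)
The plan is to exploit the fact that every inclusion between non-trivial subgroups of an infinite cyclic group has finite index, which will force the stabilizers along any edge path of the tree to be pairwise commensurable.

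First I would dispose of the case $h=1$ (the statement is vacuous there), and henceforth assume $h \neq 1$. Since $G$ acts on the simplicial tree $T$, an elliptic $h$ fixes some vertex $v$, so $\langle h \rangle \leq G_v$, both infinite cyclic. A non-trivial subgroup of an infinite cyclic group has finite index, so $\langle h \rangle \sim G_v$. Thus the first assertion reduces to showing that any two vertex stabilizers $G_{v_1}$ and $G_{v_2}$ satisfy $G_{v_1} \sim G_{v_2}$.

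For this, I would argue by induction on the length of the unique reduced edge path from $v_1$ to $v_2$. The base case (length one) is the key observation: if $v, v'$ are the endpoints of an edge $e$ of $T$, then $G_e$ is an infinite cyclic subgroup of both $G_v$ and $G_{v'}$; as each of the three groups is infinite cyclic, $G_e$ has finite index in both, so $G_v \cap G_{v'} \supseteq G_e$ is infinite, giving $G_v \sim G_{v'}$. Transitivity of $\sim$ (already established in the excerpt as an equivalence relation on the infinite virtually cyclic subgroups) then propagates commensurability along the edge path, yielding $G_{v_1} \sim G_{v_2}$. Combined with the previous paragraph's reduction, this shows $\langle h_1 \rangle \sim \langle h_2 \rangle$ for any pair of non-trivial elliptic elements $h_1, h_2$.

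For the second assertion, fix an elliptic $h \neq 1$ fixing a vertex $v$ of $T$. For any $g \in G$, conjugation by $g$ is a simplicial automorphism of $T$, so $g^{-1}hg$ fixes the vertex $g^{-1}\cdot v$ and is therefore itself a non-trivial elliptic element. Applying the first assertion to the pair $h$ and $g^{-1}hg$ gives $\langle h \rangle \sim \langle g^{-1}hg \rangle = \langle h \rangle^g$, whence $g \in \mathrm{N}_G[\langle h \rangle]$. Since $g$ was arbitrary, $\mathrm{N}_G[\langle h \rangle] = G$. The only place something could go wrong is in ensuring the edge and vertex stabilizers entering the chain of commensurabilities are actually non-trivial, but this is automatic from the hypothesis that all of them are infinite cyclic, hence infinite; the rest is a direct application of the two properties of $\sim$ recorded in the excerpt.
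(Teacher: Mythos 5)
Your proof is correct and follows essentially the same route as the paper's: both arguments chain commensurability along the reduced edge path joining fixed vertices, using that the (infinite cyclic) edge stabilizer meets each adjacent (infinite cyclic) vertex stabilizer in an infinite subgroup, and then deduce $\mathrm{N}_G[\langle h\rangle]=G$ from the fact that conjugates of elliptic elements are elliptic. Your explicit reduction to comparing vertex stabilizers and your handling of the trivial-element edge case are minor reorganizations of the same argument.
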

\begin{proof} Let $g_1$ and $g_2$ be  elliptic elements of $G$. By definition, there exist vertices $v_1,v_2 \in T$ such that $g_1 \cdot v_1=v_1$ and  $g_2 \cdot v_2=v_2$. 
First, suppose that $v_1=v_2$. In this case, $g_1$ and $g_2$ are both contained in the stabilizers of $v_1$, which is infinite cyclic. This implies that $\langle g_1 \rangle$ and $\langle g_2 \rangle$ are equivalent. Now, consider the case where $v_1$ and $v_2$ are connected by an edge $e$. Denote the stabilizer of $e$ by $H$. Since $H$ is a finite index subgroup of the stabilizers of $v_1$ and $v_2$, it follows that $H$ is equivalent to  $\langle g_1 \rangle$ and $\langle g_2 \rangle$. Hence, $\langle g_1 \rangle$ and $\langle g_2 \rangle$ are equivalent. The general case now follows by induction on the distance between $v_1$ and $v_2$. \\
\indent Since any conjugate of an elliptic element is again elliptic and two elliptic elements of $G$ generate equivalent infinite cyclic subgroups, it follows that $\mathrm{N}_G[\langle h \rangle]=G$ for every elliptic element $h \in G$.
\end{proof}

\begin{lemma} \label{lemma: free group} Let $F$ be a finitely generated 
non-abelian free group, then $\underline{\underline{\mathrm{cd}}}(F)=2$.
\end{lemma}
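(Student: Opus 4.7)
I will establish the two bounds separately.

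\emph{Upper bound.} The group $F$ acts freely, cellularly and by isometries on its Cayley tree $T$ with respect to a finite free generating set. Then $T$ is a $1$-dimensional complete separable CAT(0)-space with trivial point stabilizers, so Corollary~\ref{cor: intro specific stab}(i) applied with $n=1$ immediately yields $\underline{\underline{\mathrm{cd}}}(F) \leq 2$.

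\emph{Lower bound.} The plan is to apply the L\"uck--Weiermann long exact sequence of Proposition~\ref{prop: push out} with a well-chosen coefficient module $M$. Two preliminary observations are essential. First, in a free group any two commuting nontrivial elements lie in a common cyclic subgroup, so two infinite cyclic subgroups of $F$ are commensurable if and only if they are contained in a common maximal cyclic subgroup; consequently, for any maximal cyclic $H \leq F$ one has $\mathrm{N}_F[H] = H$, and since $H$ itself lies in $\mathcal{F}[H]$, we have $\mathrm{cd}_{\mathcal{F}[H]}(H) = 0$. Second, the set of conjugacy classes of maximal cyclic subgroups of $F$ is infinite (e.g.\ for $F = \langle a,b\rangle$ the elements $ab, a^{2}b, a^{3}b, \ldots$ are primitive and pairwise non-conjugate), so the index set $\mathcal I$ in Proposition~\ref{prop: push out} is countably infinite.

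Combining these observations with the facts $\mathrm{cd}(F) = 1$ (so $\underline{\mathrm{cd}}(F) = 1$) and $\mathrm{cd}(\mathbb Z) = 1$, one checks that each group in the L\"uck--Weiermann sequence with $i \geq 2$ on the ``middle'' and ``right'' sides vanishes, leaving an exact fragment
\[
H^{1}(F, A) \;\xrightarrow{\mathrm{res}}\; \prod_{[H] \in \mathcal I} H^{1}(H, A) \;\longrightarrow\; H^{2}_{\mathcal{VC}}(F, M) \;\longrightarrow\; 0,
\]
where $A := M(F/1)$. Taking $M = \underline{\mathbb Z}$, the constant functor, $A$ becomes $\mathbb Z$ with trivial $F$-action: the source equals $\mathrm{Hom}(F, \mathbb Z) \cong \mathbb Z^{\mathrm{rank}(F)}$, a finitely generated abelian group, while the target is a direct product of infinitely many copies of $\mathbb Z$. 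The restriction map cannot be surjective for cardinality reasons, so $H^{2}_{\mathcal{VC}}(F, \underline{\mathbb Z}) \neq 0$, whence $\underline{\underline{\mathrm{cd}}}(F) \geq 2$.

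\emph{Main obstacle.} The main technical point is verifying the collapse of the L\"uck--Weiermann sequence, which relies on the malnormal-cyclic structure of free groups (so that $\mathrm{N}_F[H] = H$ for maximal cyclic $H$) together with the vanishing $\mathrm{cd}(F)=1$. Once this collapse is in place, the rest is a cardinality obstruction, and the choice $M = \underline{\mathbb Z}$ requires no delicate computation.
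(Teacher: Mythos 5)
Your proof is correct and follows essentially the same route as the paper: the upper bound via the free action on a tree together with Corollary \ref{cor: intro specific stab}, and the lower bound via the L\"uck--Weiermann sequence of Proposition \ref{prop: push out} with trivial coefficients $\underline{\mathbb{Z}}$, using that $\mathrm{N}_F[H]\cong\mathbb{Z}$ for each class, that $\mathcal{I}$ is infinite, and that the finitely generated group $\mathrm{H}^1(F,\mathbb{Z})$ cannot surject onto an infinite product of copies of $\mathbb{Z}$. The only cosmetic difference is that you record the exactness at $\mathrm{H}^2_{\mathcal{VC}}(F,\underline{\mathbb{Z}})$ explicitly (the trailing $\to 0$), which the paper does not need since non-surjectivity of the preceding map already forces non-vanishing.
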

\begin{proof} Since $F$ acts freely on a tree, we have 
$\underline{\underline{\mathrm{cd}}}(F)\leq 2$ by Corollary \ref{cor: intro specific stab}. Let $\underline{\mathbb{Z}}$ be the trivial right 
$\mathcal{O}_{\mathcal{VC}}F$-module  and let $\mathcal{TR}$ be the 
family of $F$ containing only the trivial subgroup. Proposition \ref{prop: 
push out} yields the following exact sequence
\[ \Big(\prod_{[H] \in \mathcal{I}} \mathrm{H}^{1}_{\mathcal{TR}[H] }( 
\mathrm{N}_{F}[H],\underline{\mathbb{Z}})\Big)\oplus 
\mathrm{H}^{1}(F,\mathbb{Z}) \rightarrow  \prod_{[H] \in \mathcal{I}} 
\mathrm{H}^{1}( \mathrm{N}_{F}[H],\mathbb{Z}) \rightarrow 
\mathrm{H}^{2}_{\mathcal{VC}}(F,\underline{\mathbb{Z}}) . \]
Since $F$ is a free group, it follows  that $\mathrm{N}_{F}[H]\cong 
\mathbb{Z}$ for each $H \in \mathcal{I}$. Hence, we have 
$\mathrm{cd}_{\mathcal{TR}[H]}(N_F[H])=0$ for each $H \in \mathcal{I}$, 
and the exact sequence reduces to
\[\mathrm{H}^{1}(F,\mathbb{Z}) \rightarrow  \prod_{[H] \in \mathcal{I}} 
\mathrm{H}^{1}( \mathbb{Z},\mathbb{Z}) \rightarrow 
\mathrm{H}^{2}_{\mathcal{VC}}(F,\underline{\mathbb{Z}}) . \]
Since the set $\mathcal{I}$ is infinite  and 
$\mathrm{H}^{1}(F,{\mathbb{Z}}) $ is finitely generated, it 
follows that $\mathrm{H}^{2}_{\mathcal{VC}}(F,\underline{\mathbb{Z}}) 
\neq 0$.
We conclude that $\underline{\underline{\mathrm{cd}}}(F)=2$.
\end{proof}

\begin{proof}[Proof of Corollary \ref{cor: intro baumslag}] Clearly, if $G\cong \Z$ then $\underline{\underline{\mathrm{cd}}}(G)= 0$. Let us suppose now that $G$ is not cyclic.  By Corollary \ref{cor: intro specific stab}, it follows that
$\underline{\underline{\mathrm{cd}}}(G)\leq 3$. If $G$ contains a subgroup isomorphic to 
$\mathbb{Z}^2$ then $\underline{\underline{\mathrm{cd}}}(G)= 3$, 
since $\underline{\underline{\mathrm{cd}}}(\mathbb{Z}^2)= 3$ (see 
e.g.~\cite[5.21]{LuckWeiermann}). 

Suppose $G$ does 
not contain a subgroup isomorphic to $\mathbb{Z}^2$. By definition, $G$ 
acts on a tree $T$ with infinite cyclic edge and vertex stabilizers. Let 
$H$ be an infinite cyclic subgroup of $G$ generated by a hyperbolic 
element $h$ of $G$. By Proposition 24 in \cite{Serre}, the axis of $h$ 
coincides with the axis of $h^i$, for each $i\geq 1$. Using this fact, it is not difficult to
check that $\mathrm{N}_G[H]$ acts on the axis of $h$. As in the proof 
of Proposition \ref{prop: second one}, the action of $\mathrm{N}_G[H]$ 
on the axis of $h$ yields a short exact sequence
\[ 1 \rightarrow N \rightarrow \mathrm{N}_G[H] \rightarrow Q \rightarrow 
1 \]
such that $N$ is a subgroup of $\mathbb{Z}$ and $Q$ is an infinite 
subgroup of the infinite dihedral group. Since $G$ does not contain a 
subgroup isomorphic to $\mathbb{Z}^2$, this implies that $N$ is  
trivial. Since $G$ is torsion-free, it follows that $ 
\mathrm{N}_G[H]\cong \mathbb{Z}$. \\
\indent Let $M$ be a right $\mathcal{O}_{\mathcal{VC}}G$-module an let 
$\mathcal{TR}$ be the family of $G$ containing only the trivial 
subgroup. By Proposition \ref{prop: push out}, we have the following long exact 
sequence
\[  \mathrm{H}^{1}_{\mathcal{VC}}(G,M) \rightarrow \Big(\prod_{[H] \in 
\mathcal{I}} \mathrm{H}^{1}_{\mathcal{TR}[H] }( 
\mathrm{N}_{G}[H],M)\Big)\oplus  \mathrm{H}^{1}(G,M) \rightarrow 
\prod_{[H] \in \mathcal{I}} \mathrm{H}^{1}( \mathrm{N}_{G}[H],M) \]
\[  \rightarrow \mathrm{H}^{2}_{\mathcal{VC}}(G,M) \rightarrow 
\Big(\prod_{[H] \in \mathcal{I}} \mathrm{H}^{2}_{\mathcal{TR}[H] }( 
\mathrm{N}_{G}[H],M)\Big)\oplus  \mathrm{H}^{2}(G,M) \rightarrow  
\prod_{[H] \in \mathcal{I}} \mathrm{H}^{2}( \mathrm{N}_{G}[H],M)  \]
\[  \rightarrow \mathrm{H}^{3}_{\mathcal{VC}}(G,M) \rightarrow 
\Big(\prod_{[H] \in \mathcal{I}} \mathrm{H}^{3}_{\mathcal{TR}[H] }( 
\mathrm{N}_{G}[H],M)\Big)\oplus  \mathrm{H}^{3}(G,M) \rightarrow  
\prod_{[H] \in \mathcal{I}} \mathrm{H}^{3}( \mathrm{N}_{G}[H],M).  \]
By Lemma \ref{lem: elliptic}, the set $\mathcal{I}$ contains exactly one element $[K]$ 
such that $K$ is generated by an elliptic element. Also, by the same lemma, we have $\mathrm{N}_G[K]=G$. Moreover, the family 
$\mathcal{TR}[K]$ is the same as the family of all cyclic groups generated by 
elliptic elements of $G$, and $\mathrm{cd}_{\mathcal{TR}[K]}G \leq 1$ 
because the tree $T$ is a model for $E_{\mathcal{TR}[K]}G$. If $H$ is 
generated by a hyperbolic element, then $\mathrm{N}_G[H]\cong 
\mathbb{Z}$ and hence $\mathrm{cd}_{\mathcal{TR}[H]}\mathrm{N}_G[H] =0$. Combining all these facts, the long exact sequence 
reduces to
\begin{equation} \label{eq: long 
exact}\mathrm{H}^{1}_{\mathcal{VC}}(G,M) \rightarrow 
\mathrm{H}^{1}(G,M)\oplus  \mathrm{H}^{1}_{\mathcal{TR}[K] }( G,M) 
\rightarrow   \mathrm{H}^{1}( G,M)\oplus \prod_{[H] \in \mathcal{I}_0} 
\mathrm{H}^{1}( \mathbb{Z},M)  \end{equation}
\[  \rightarrow \mathrm{H}^{2}_{\mathcal{VC}}(G,M) \rightarrow 
\mathrm{H}^{2}(G,M) \xrightarrow{\mathrm{Id}} \mathrm{H}^{2}(G,M) 
\rightarrow \mathrm{H}^{3}_{\mathcal{VC}}(G,M) \rightarrow 0,    \]
where $K$ is generated by an elliptic element of $G$ and $\mathcal{I}_0 
= \mathcal{I} \smallsetminus [K]$. This implies that 
$\mathrm{H}^{3}_{\mathcal{VC}}(G,M)=0$ for every $M$ and hence 
$\underline{\underline{\mathrm{cd}}}(G)\leq 2$.

By Corollary $2$ of \cite{Kropholler}, the second derived subgroup of 
$G$ is a free group $F$. If $F$ is non-abelian, then $G$ contains a 
finitely generated non-abelian free group and hence $ 
\underline{\underline{\mathrm{cd}}}(G)=2$,  by Lemma \ref{lemma: free 
group}. If $F$ is abelian, then $G$ is a solvable group of cohomological 
dimension $2$.  Since $G$ is a finitely generated non-cyclic group that 
does not contain $\mathbb{Z}^2$, we conclude from \cite[Theorem 5]{gilden} that $G$ 
is isomorphic to a solvable Baumslag-Solitar group $BS(1,n)$, with 
$n\neq \pm 1$. It is well-known that, in this case, $G$ can be written 
as a semi-direct product $\mathbb{Z}[\frac{1}{n}] \rtimes \mathbb{Z}$. 
Here the generator $t$ of $\mathbb{Z}$ acts on  
$\mathbb{Z}[\frac{1}{n}]$ as multiplication by $n$. Setting $M$ as the
  trivial module $\underline{\mathbb{Z}}$, we obtain from (\ref{eq: 
long exact}) the exact sequence
\[\mathrm{H}^{1}(G,\mathbb{Z}) \oplus \mathrm{H}^{1}_{\mathcal{TR}[K] }( 
G,\underline{\mathbb{Z}}) \xrightarrow{f}  \mathrm{H}^{1}(G,\mathbb{Z}) 
\oplus \prod_{[H] \in \mathcal{I}_0} \mathrm{H}^{1}( 
\mathbb{Z},\mathbb{Z}) \rightarrow 
\mathrm{H}^{2}_{\mathcal{VC}}(G,\underline{\mathbb{Z}}) . \]
Note that in this exact sequence, $f$ maps $\mathrm{H}^{1}_{\mathcal{TR}[K] }( 
G,\underline{\mathbb{Z}}) $ into 
$\mathrm{H}^{1}(G,\mathbb{Z})\cong \mathbb{Z}$.  Now, if $H$ is an 
infinite cyclic subgroup of $G$ generated by an element $h$ of $G$ that 
is not contained in $\mathbb{Z}[\frac{1}{m}]$, then one verifies 
directly or by Lemma 3.1 of \cite{DP2} that 
$\mathrm{N}_G[H]\cong \Z$. Therefore, by Lemma \ref{lem: elliptic}, $h$ is a hyperbolic element 
of $G$. In particular, the subgroups $H_1=\langle (0,t) \rangle$ and  
$H_2=\langle (1,t) \rangle$ of $G$ are generated by hyperbolic elements. 
Moreover, one can check that $[{}^gH_1]\neq [H_2]$ for all $g \in G$. 
This shows that $\mathcal{I}_0$ contains at least two element. 
Therefore, the map $f$ cannot be surjective and we conclude that 
$\mathrm{H}^{2}_{\mathcal{VC}}(G,\underline{\mathbb{Z}}) \neq 0$. Therefore, 
$\underline{\underline{\mathrm{cd}}}(G)=2$. This finishes the proof.
\end{proof}

\subsection{Graph product of groups} Instead of looking at graphs of groups we can, more generally, consider Haefliger's complexes of groups (see \cite{Haef} and \cite{BridHaef}). However, one needs to be more careful in this context. In contrast to the one-dimensional case, which is the graph of groups, it is not guaranteed that a complex of groups gives rise to an action of its direct limit on a CAT(0)-space. We consider an instance where one does obtain such an action, namely for graph products of groups.

 Let $L$ be a simplicial graph with finite vertex set S and let $\{G_s\}_{s \in S}$ be a collection of countable groups indexed by $S$. The associated graph product $G_L$ is the group generated by the elements of the vertex groups $G_s$, subject to the relations in $G_s$ and the additional relations that $[g_s,g_t]=e$ if $g_s \in G_s$, $g_t \in G_t$ and $s$ and $t$ are joined by an edge in $L$. Let $K$ be the flag complex associated to $L$ and let $\mathcal{Q}$ be the poset of those subsets of $S$ that span a simplex in $K$, including the empty set. Associate to each $\sigma=(s_1,\ldots,s_r) \in \mathcal{Q}$,  the group $G_{\sigma}=\prod_{i=1}^r G_{s_r}$. As explained in \cite[II.12.30(2)]{BridHaef} (see also \cite{Davis} and \cite{Meier}), these data, together with the obvious inclusion maps, form a simple complex of groups $G(\mathcal{Q})$ over the poset $\mathcal{Q}$ with direct limit $\widehat{G(\mathcal{Q})}=G_L$. Moreover,  $G_L$ acts isometrically, cocompactly and cellularly on a complete CAT(0) piecewise Euclidean cubical complex $X_{G_L}$, with stabilizers isomorphic to subgroups of the local groups $G_{\sigma}$, $\sigma \in \mathcal{Q}$. Moreover, $X_{G_L}$ is separable, $G$ acts discretely on $X_{G_L}$because it acts cellularly, and the topological dimension of $X_{G_L}$ is $\dim(K)+1$. By Theorem B, we obtain the following.
 \begin{corollary} \label{cor:  graph product} Let $L$ be a simplicial graph with a finite vertex set $S$ and an associated flag complex of dimension $n$. Let $\{G_s\}_{s \in S}$ be a collection of countable groups indexed by the vertex set $S$ and let $G_L$ be the associated graph product. By considering the action of $G_L$ on $X_{G_L}$, one obtains
\[\underline{\underline{\mathrm{cd}}}(G_L)\leq  \max\{\mms(G_L,X_{G_L}), \mv(G_L,X_{G_L})+1\}+n+1.  \]
\end{corollary}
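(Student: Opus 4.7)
The plan is to invoke Theorem B directly for the action of $G_L$ on $X_{G_L}$. All the necessary geometric input has already been assembled in the paragraph preceding the corollary; the task is simply to check that every hypothesis of Theorem B is met and then read off the bound with topological dimension $n+1$ in place of $n$.

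First I would verify countability of $G_L$. Since the vertex set $S$ is finite and each vertex group $G_s$ is countable, the union $\bigcup_{s\in S}G_s$ is a countable generating set for $G_L$, so $G_L$ is countable. Next I would check that $X_{G_L}$ is a complete separable CAT(0)-space of topological dimension $n+1$ on which $G_L$ acts discretely: completeness, the CAT(0) property, separability, cellularity (hence discreteness of the action), and the identity $\dim(X_{G_L})=\dim(K)+1=n+1$ are all recorded in the construction recalled just above the statement.

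The only remaining hypothesis is that $G_L$ acts by semi-simple isometries. For this I would apply Bridson's theorem, cited in the paper immediately after the statement of Theorem B: since $X_{G_L}$ is a piecewise Euclidean cubical complex built from finitely many isometry types of Euclidean cubes and $G_L$ acts on it by cellular isometries, every element of $G_L$ acts as a semi-simple isometry of $X_{G_L}$.

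With all hypotheses confirmed, Theorem B applied to $(G_L,X_{G_L})$ with topological dimension $n+1$ gives
\[\underline{\underline{\mathrm{cd}}}(G_L)\leq \max\{\mms(G_L,X_{G_L}),\,\mv(G_L,X_{G_L})+1\}+(n+1),\]
which is precisely the asserted bound. The proof is almost entirely a verification of hypotheses; the only potential obstacle is keeping the distinction between $\dim(K)=n$ and $\dim(X_{G_L})=n+1$ straight when invoking Theorem B, but no essentially new ideas are required.
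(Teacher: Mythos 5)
Your proposal is correct and follows essentially the same route as the paper: the paper's argument is precisely the verification, recorded in the paragraph preceding the corollary, that $X_{G_L}$ is a complete separable CAT(0) cubical complex of topological dimension $n+1$ on which $G_L$ acts discretely (being cellular) and by semi-simple isometries (via Bridson's theorem), followed by a direct application of Theorem B. Your explicit check of the countability of $G_L$ and of semi-simplicity only makes explicit what the paper leaves implicit.
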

 
\subsection{Groups acting on Euclidean buildings} Euclidean buildings or rather their geometric realizations provide nice examples of finite dimensional complete CAT(0)-spaces  (see \cite{Davis} or \cite[\S 11.2]{AB}). In fact, these objects have a rigid metric structure  as they are CAT(0)-polyhedral cell complexes with finitely many shapes of cells.  

In what follows, we consider  groups that act on Euclidean buildings, assuming always that such an action is cellular and isometric. 
\begin{corollary} \label{cor: build} Let $G$ be a discrete group that acts on a separable Euclidean building $X_i$  for each $i=1, \dots , r$.  Denote $X=X_1\times \ldots \times X_r$ and  by $n$  the dimension of $X$. Let $G$ act diagonally on $X$. Let $\mathcal{F}$ be a family of subgroups of $G$ such that $X^{H} \neq \emptyset$ for all $H \in \mathcal{F}$. Suppose that there exists an integer $d \geq 0$ such that for each $x \in X$ one has $\mathrm{cd}_{\mathcal{F}\cap G_x}(G_x)\leq d$. Then \[\mathrm{cd}_{\mathcal{F}}(G)\leq n+d \;\;\; \mbox{ and } \;\;\; \underline{\underline{\mathrm{cd}}}(G)\leq \max\{\mms(G, X), \mv(G, X)+1\}+n. \]
\end{corollary}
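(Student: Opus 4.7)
The plan is to apply Theorems A and B to the diagonal action of $G$ on the product $X=X_1\times\cdots\times X_r$, after verifying that $(G,X)$ satisfies all the standing hypotheses of those theorems. Once that verification is done, the two inequalities fall out immediately as special cases.

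First, I would check the geometric conditions on $X$. Each Euclidean building $X_i$, with its natural piecewise Euclidean metric of finitely many shapes, is a complete CAT(0)-space (see e.g.~\cite[\S 11.2]{AB}). Equipping the finite product $X$ with the $\ell^{2}$-metric yields a complete CAT(0)-space by \cite[II.1.15]{BridHaef}; the space remains separable since a finite product of separable metric spaces is separable; the topological dimension is $n$ by definition; and $X$ inherits from its factors the structure of a piecewise Euclidean polyhedral complex with finitely many isometry types of cells.

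Second, I would verify the properties of the action. The diagonal action of $G$ on $X$ is cellular and isometric because each factor action is. Because $X$ is a piecewise Euclidean complex with finitely many isometry types of cells, a cellular isometric action is automatically discrete: the setwise stabilizer of any cell permutes its finitely many faces and hence acts on the cell through a finite group of symmetries, so the orbit of any interior point is locally finite in $X$. Moreover, by \cite[Theorem A]{Bridson}, every cellular isometry of a piecewise Euclidean complex with finite shapes is semi-simple, so the $G$-action is by semi-simple isometries.

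With these hypotheses verified, Theorem A applied to the pair $(G,X)$ and the family $\mathcal{F}$ gives directly $\mathrm{cd}_{\mathcal{F}}(G)\le n+d$, and Theorem B gives directly $\underline{\underline{\mathrm{cd}}}(G)\le \max\{\mms(G,X),\mv(G,X)+1\}+n$. The countability assumption needed to invoke Theorem B, if not already present, reduces to the finitely generated case through Lemma \ref{lemma: finitely generated}, since any finitely generated subgroup of $G$ preserves the subcomplex it generates under the cellular action, on which all previous hypotheses continue to hold. The principal technical point in this plan is the rigorous justification of discreteness and semi-simplicity for the diagonal action on the product; once those are granted, nothing further needs to be proven beyond quoting Theorems A and B.
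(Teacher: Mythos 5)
Your proposal is correct and follows essentially the same route as the paper, which simply observes that $X$ is a separable CAT(0)-space of topological dimension at most $n$ on which $G$ acts by semi-simple isometries (by Bridson's theorem) and then quotes Theorems A and B. You supply more detail than the paper does on the discreteness of the cellular action and on the countability hypothesis of Theorem B, but these are verifications the paper leaves implicit rather than a different argument.
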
 
\begin{proof} Note that $X$ is a separable CAT(0)-metric space of topological dimension at most $n$. Also, by a result of Bridson (see \cite{Bridson}), $G$ acts by semi-simple isometries on $X$.  So, the assertions follow directly from Theorems A and B.
\end{proof}
As we shall prove next, this result has an immediate application to Bredon cohomological dimension of linear groups of positive characteristic.

\begin{proof}[Proof of Corollary \ref{cor: intro linear}  ]The strategy of the proof is to obtain an action of $G$ as in the statement of  the previous corollary on a finite product of buildings. A construction of Cornick and Kropholler (see \cite[\S 8]{CK}) which is the positive characteristic version of the original construction of  Alperin and Shalen (see \cite{AS}) does exactly this. We briefly outline their argument in order to point out that the product of buildings in this construction is a separable space. 

Since the general linear group $\mathrm{GL}_n(F)$ is isomorphic to a subgroup of $\mathrm{SL}_{n+1}(F)$, we can assume that $G$ is a subgroup of $\mathrm{SL}_n(F)$. Let $S$ be the subring of $F$ generated by the matrix entries of a finite set of generators of $G$ and their inverses. Then $G$ embeds into $\mathrm{SL}_{n}(S)$. So, without loss of generality, we can assume that $G=\mathrm{SL}_{n}(S)$. The ring $S$ is a finitely generated domain and hence it is integral over a polynomial ring $\mathbb F_p[x_1, \dots , x_s]$.  Let $E$ be the fraction field of $S$. It follows that there are finitely many discrete valuation rings $\mathcal O_{v_i}$ of $E$, $1\leq i\leq r$ such that $S\cap \bigcap_{i=1}^r \mathcal O_{v_i}$ is contained in the algebraic closure $L$ of $\mathbb F_p$ in $E$ and $L$ is finite (see \cite[Proposition 8.4]{CK}). 

Let $\hat{E}_i$ denote the completion of $E$ with respect to the valuation $v_i$  and consider  for each $1\leq i\leq r$  the group $\mathrm{SL}_{n}(\hat{E}_i)$.  There is a Euclidean  building $X_i$  of dimension $n-1$  associated  to $\mathrm{SL}_{n}(\hat{E}_i)$ such that this group acts chamber transitively  on $X_i$ and the restriction of the action  to $G$ has vertex stabilizers conjugate to a subgroup of $\mathrm{SL}_{n}(\mathcal O_{v_i})$  (see \cite[\S 6.9, \S 11.8.6]{AB}, \cite[page 61]{CK}). It follows that $G$ acts diagonally on the product $X=X_1\times \ldots \times X_r$ such that each stabilizer subgroup $G_x$ of a vertex $x$ of $X$ lies inside 
$$\mathrm{SL}_{n}(S)\cap \bigcap_{i=1}^r a_i^{-1}\mathrm{SL}_{n}(\mathcal O_{v_i}){a_i}, \;\;  \mbox{ for }  \;\; a_i\in \mathrm{SL}_{n}(E), \; \; i=1, \dots, r.$$ Lemmas 8.6 and  8.7 of \cite{CK} entail that  $G_x$ is  locally finite. For each $1\leq i\leq r$, let $\Sigma_i$ be the fundamental chamber of $X_i$. Since $X_i$ is a continuous image of  the separable space $\mathrm{SL}_{n}(\hat{E}_i)\times \Sigma_i$, it is itself separable. 

Hence, we obtain an action of $G$ on a product $X$ of Euclidean buildings with countable locally finite stabilizers and moreover, $X$ is a separable space. Since countable locally finite groups have Bredon cohomological dimension for the family of finite subgroups at most 1, by Corollary \ref{cor: finite}, we deduce that ${\underline{\mathrm{cd}}}(G)\leq 1 + r(n-1)$. 

Note that $\mv(G, X)\leq 1$ because a finite extension of a locally finite group is again locally finite.  Therefore, Corollary \ref{cor: build} implies $\underline{\underline{\mathrm{cd}}}(G)\leq 2+r(n-1)$.
\end{proof}

\begin{remark}\rm Observe from the proof above that for any finitely generated domain $S$ of positive characteristic $p$ and for a positive integer $n$, one has
$${\underline{\mathrm{cd}}}(\mathrm{SL}_{n}(S))\leq 1+ r(n-1) \;\;\; \mbox{ and } \;\;\; \underline{\underline{\mathrm{cd}}}(\mathrm{SL}_{n}(S))\leq 2+r(n-1)$$
where  $r$ is the minimum number of valuations $v_i$ such that $S\cap \bigcap_{i=1}^r \mathcal O_{v_i}$ is contained in the algebraic closure of $\mathbb F_p$ in the fraction field of $S$.
\end{remark}

\subsection{Mapping class groups}
Let $S_g$ be a closed, connected and oriented surface of genus $g$ and denote by $\mathrm{Homeo}_{+}(S_g)$ the group of orientation preserving homeomorphisms of $S_g$. Equipped with the compact-open topology, $\mathrm{Homeo}_{+}(S_g)$ becomes a topological group. The mapping class group of the surface $S_g$, denoted  $\mathrm{Mod}(S_g)$, is by definition the discrete group
\[\mathrm{Mod}(S_g)= \mathrm{Homeo}_{+}(S_g)/\mathrm{Homeo}^{0}(S_g),     \]
where $\mathrm{Homeo}^{0}(S_g)$  is the identity component of $\mathrm{Homeo}_{+}(S_g)$. Equivalently, one can say that $\mathrm{Mod}(S_g)$ is the group of isotopy classes of orientation preserving homeomorphisms of $S_g$.  Mapping class groups are known to be virtually torsion-free, residually finite and finitely presented. The mapping class group of the $2$-sphere is trivial. The mapping class group of the torus is $\mathrm{SL}(2,\mathbb{Z})$ which is isomorphic to the amalgamated free product $\mathbb{Z}_4 \ast_{\mathbb{Z}_2} \mathbb{Z}_6$. So,  by Lemma \ref{lemma: free group} and Corollary \ref{cor: intro specific stab}, it follows that $\underline{\underline{\mathrm{cd}}}(\mathrm{SL}(2,\mathbb{Z}))= 2$.

From now on, we assume that $g \geq 2$. A marked hyperbolic structure on $S_g$ is a pair $(X,\varphi)$, where $X$ is a closed, connected and orientable surface with a   hyperbolic metric and $\varphi: S_g \rightarrow X$ is a diffeomorphism.  The map $\varphi$ is called a marking. Two marked hyperbolic structures $(X,\varphi)$ and $(X',\varphi')$ on $S_g$ are called equivalent if there exists an isometry $\theta: X \rightarrow X'$ such that $\theta \circ \varphi$ is homotopic to $\varphi'$. The Teichm\"{u}ller space $\mathcal{T}(S_g)$ of $S_g$ is by definition the space of equivalence classes of marked hyperbolic structures on $S_g$. The space $\mathcal{T}(S_g)$ can be given a natural topology under which it is homeomorphic to $\mathbb{R}^{6g-6}$. It is known that the mapping class group $\mathrm{Mod}(S_g)$ acts properly on $\mathcal{T}(S_g)$  by precomposing markings with diffeomorphisms (every self-homeomorphism of $S_g$ is isotopic  to a diffeomorphism (see \cite[1.13]{Farb})). Moreover, $\mathcal{T}(S_g)$ has an equivariant triangulation making it  a model for $\underline{E}\mathrm{Mod}(S_g)$ (see \cite{mislin} and the references therein). We refer the reader to \cite{Farb} for more details about stated and other results on  mapping class groups and Teichm\"{u}ller space.

There are several metrics that can be given to $\mathcal{T}(S_g)$ that all give rise to the same topology. However, the metric properties of $\mathcal{T}(S_g)$ under these different metrics vary considerably.  The metric we will be interested in is the so-called Weil-Petersson metric $d_{WP}$. Equipped with the Weil-Petersson metric, the  Teichm\"{u}ller space $\mathcal{T}(S_g)$ is a non-complete separable CAT(0)-space of dimension $6g-6$, on which $\mathrm{Mod}(S_g)$ acts by isometries. The completion of $\mathcal{T}(S_g)$ with respect to the Weil-Petersson metric is the augmented Teichm\"{u}ller space $\overline{\mathcal{T}}(S_g)$. Roughly stated, the augmented Teichm\"{u}ller space is a stratified space whose strata are Teichm\"{u}ller spaces associated to nodal surfaces obtained by shrinking essential simple closed loops on $S_g$ to pairs of cusps. It follows that $(\overline{\mathcal{T}}(S_g),d_{WP})$ is a complete separable CAT(0)-space of dimension $6g-6$ on which $\mathrm{Mod}(S_g)$ acts by isometries. Although $\overline{\mathcal{T}}(S_g)$ is not locally compact, the quotient space of $\overline{\mathcal{T}}(S_g)$ obtained by modding out the action of $\mathrm{Mod}(S_g)$ is compact. It is the so-called Deligne-Mumford compactification of the moduli space of curves. We refer to the survey papers \cite{wolpert} and \cite{wolpert2} for the definition of the Weil-Petersson metric and the augmented  Teichm\"{u}ller space, and for the references concerning the properties stated above. 

\begin{proof}[Proof of Corollary \ref{cor: intro mapping class group}]
As we already mentioned, the mapping class group $\mathrm{Mod}(S_g)$ acts by isometries on the augmented Teichm\"{u}ller space $\overline{\mathcal{T}}(S_g)$, which is a complete separable CAT(0)-space of dimension $6g-6$. Moreover, according to Theorem $A$ in \cite{Bridson2} the action is by semi-simple isometries. We claim that this action  is, in addition, discrete and that the isotropy groups are finitely generated virtually abelian of Hirsch length at most $3g-3$. Assuming these claims, the result follows from Corollary \ref{cor: intro specific stab}(iii). 

Now, let us prove our claims.  Since $\mathrm{Mod}(S_g)$ acts properly on $\mathcal{T}(S_g)$, all orbits of points in  $\mathcal{T}(S_g)$ are discrete and the stablizers of all points in $\mathcal{T}(S_g)$ are finite. It remains to consider points in $\overline{\mathcal{T}}(S_g)\smallsetminus \mathcal{T}(S_g)$.  This space is a union of strata $\mathcal{S}_{\Gamma}$ corresponding to sets $\Gamma$ of free homotopy classes of disjoint essential simple closed curves on $S_g$. Let $x \in \mathcal{S}_{\Gamma} $ and let $\Delta_{\Gamma}$ be the group generated by the Dehn twists defined by the curves in $\Gamma$. The group $\Delta_{\Gamma}$ is free  abelian of rank at most $3g-3$ and fixes $\mathcal{S}_{\Gamma}$ pointwise. Hence, $\Delta_{\Gamma}$ is contained in the point stabilizers of $x$. Moreover, Corollary 2.7 in \cite{hubbard} states that there exists an open neighbourhood $U \subseteq \overline{\mathcal{T}}(S_g)$ of $x$ such that the set
\[ \{  g \in \mathrm{Mod}(S_g) \ | \ g\cdot U \cap U \neq \emptyset            \}\]
is a finite union of cosets of $\Delta_{\Gamma}$. This implies that the point stablizer of $x$ contains $\Delta_{\Gamma}$ as a finite index subgroup. Also, it is not difficult to see that we can now choose an open ball $\mathrm{B}(x,\varepsilon) \subseteq U$ such that
\[ g \cdot \mathrm{B}(x,\varepsilon)\cap \mathrm{B}(x,\varepsilon) \neq \emptyset \Leftrightarrow g \in \mathrm{Mod}(S_g)_{x}. \]
Hence, the orbit of $x$ is discrete. This completes the proof.
\end{proof}
\begin{center}\textbf{Acknowledgements}\end{center}
The authors are grateful to Ralf K\"{o}hl for a helpful correspondence on Bruhat-Tits buildings and for recalling the result of Bridson on the semisimplicity of polyhedral isometries. They also thank Richard Wade for valuable discussions about mapping class groups and their actions on Teichm\"{u}ller spaces.


\begin{thebibliography}{12345}
\bibitem{AB} Abramenko, P. and  Brown, K.S.,
    {\em Buildings. Theory and applications}, Graduate Texts in Mathematics 248, Springer, New York (2008)
\bibitem{AS} Alperin, R.C. and Shalen, P.B., \emph{Linear Groups of
    finite cohomological dimension}. Invent. Math. Vol. {66} (1982), 89--98
\bibitem{BCH} Baum, P., Connes, A. and Higson, N.,
    {\em Classifying space for proper actions and K-theory of Group $C^{\ast}$-algebras}, Contemporary Mathematics 167 (1994), 241-291
\bibitem{Bredon} Bredon, G.E.,
    {\em Equivariant cohomology theories}, Lecture Notes in Mathematics 34, Springer (1967)
\bibitem{Bridson} Bridson, M.R.,
    {\em On the semisimplicity of polyhedral isometries}, Proc. Am. Math. Soc., Vol. 127 (1999), 2143--2146
\bibitem{Bridson2} Bridson, M.R.,
    {\em Semisimple actions of mapping class groups on CAT(0) spaces}, Geometry of Riemann surfaces, Lond. Math. Soc. Lec. Not. Ser. 368 (2010), 1--14
\bibitem{BridHaef} Bridson, M.R. and Haefliger, A.,
    {\em Metric spaces of non-positive curvature} Springer Verlag Vol. 319 (1999)
 \bibitem{BT} Bruhat, F. and Tits, J.,
    {\em Groupes r\'{e}ductifs sur un corps local}, Inst. Hautes \'{E}tudes Sci. Publ. Math. No. 41 (1972), 5-251
\bibitem{CK} Cornick, J. and Kropholler, P.H.,
      {\em Homological finiteness conditions for modules over group algebras}, J. Lond. Math. Soc. 58(1) (1998), no. 1, 49--62
\bibitem{Davis} Davis, M.,
   {\em Buildings are CAT(0)}, Geometry and Cohomology in Group Theory, Vol. 252 (1988), 108--123
\bibitem{DP} Degrijse, D. and Petrosyan, N.,
    {\em Commensurators and classifying spaces  with virtually cyclic stabilizers}, Groups, Geometry, and Dynamics (to appear) (2012)
\bibitem{DP2} Degrijse, D. and Petrosyan, N.,
    {\em Geometric dimension of groups for the family of virtually cyclic subgroups}, preprint, arXiv:1204.3482v2,  (2012)
\bibitem{DemPetTal} Dembegioti, F., Petrosyan, N. and Talelli, O.,
    {\em Intermediaries in Bredon (Co)homology and Classifying Spaces}, Publicacions Matem\`{a}tiques Vol. 56(2) (2012), 393--412
    \bibitem{dowker} Dowker, C.H.,
     {\em Mapping theorems for non-compact spaces}, American Journal of Mathematics, Vol. 69(2) (1947), 200--242
\bibitem{Dunwoody} Dunwoody, M.J.,
    {\em Accessibility and groups of cohomological dimension one}, Proc. Lond. Math. Soc, (3)38 (1979), 193--215
\bibitem{Engelking} Engelking, R.,
    {\em Dimension Theory}, Elsevier Science Vol. 18 (1978)
\bibitem{Farb} Farb, B. and Margalit, D.,
    {\em A primer on mapping class groups}, Princeton University Press Vol. 49 (2011)
\bibitem{FJ} Farrell, F.T. and Jones, L.E.,
    {\em Isomorphism conjectures in algebraic K-theory}, J. Amer. Math. Soc., Vol. 6 (1993), 249--297
\bibitem{FloresNuc} Flores, R.J. and Nucinkis, B.E.A.,
   {\em On Bredon homology of elementary amenable groups}, Proc. Am. Math. Soc. Vol. 135(1) (2007), 5--12
\bibitem{Fluch} Fluch, M.,
    {\em Classifying spaces with virtually cyclic stabilisers for certain infinite cyclic extensions}, J. Pure Appl. Algebra, Vol. 215(10) (2011), 2423--2430
\bibitem{FluchThesis} ---,
    {\em On Bredon (Co-)Homological Dimensions of Groups,} Ph.D. Thesis, University of Southampton (2011) 
\bibitem{Forester} Forester, M.,
    {\em On uniqueness of JSJ decompositions of finitely generated groups}, Commentarii Mathematici Helvetici, Vol. 78(4) (2003) 740--751
\bibitem{gilden} Gildenhuys, D.,
     { \em Classification of soluble groups of cohomological dimension two},  Math. Zeit., Vol. 166(1) (1979), 21--25
\bibitem{Haef} Haefliger, A.,
    {\em Complexes of groups and oribhedra}, Group Theory from a Geometrical Viewpoint (eds. Ghys E., Haefliger A., Verjodsky A,), Proc. ICTP Trieste, World Scientific Singapore (1991), 504--540
\bibitem{hubbard} Hubbard, J.H. and Koch, S.,
    {\em An analytic construction of the Deligne-Mumford compactification of the moduli space of curves}, preprint (2011)
\bibitem{Kropholler} Kropholler, P.H.,
    {\em Baumslag-Solitar groups and some other groups of cohomological dimension two}, Commentarii Mathematici Helvetici, Vol. 65(1) (1990), 547--558
\bibitem{Luck} L\"{u}ck, W.,
    {\em Transformation groups and algebraic K-theory}, Lecture Notes in Mathematics, Vol. 1408, Springer-Berlin (1989)
    \bibitem{Luck1}  ---,
        {\em The type of the classifying space for a family of subgroups}, J. Pure Appl. Algebra 149 (2000), 177--203
\bibitem{Luck2} ---,
    {\em Survey on classifying spaces for families of subgroups}, Infinite Groups: Geometric, Combinatorial and Dynamical Aspects, Springer (2005), 269--322
\bibitem{Luck3} ---,
    {\em On the classifying space of the family of virtually cyclic subgroups for CAT(0)-groups}, M\"{u}nster J. of Math. 2 (2009), 201--214
\bibitem{LuckMeintrup} L\"{u}ck, W. and Meintrup, D.,
    {\em On the universal space for group actions with compact isotropy}, Proc. of the conference ``Geometry and Topology'' in Aarhus, (1998), 293--305
\bibitem{LuckReich}  L\"{u}ck, W. and Reich H.,
    {\em The Baum-Connes and the Farrell-Jones conjectures in K- and L-theory}, Handbook of K-theory. Vol. 2, Springer, Berlin (2005), 703--842
\bibitem{LuckWeiermann} L\"{u}ck, W.  and Weiermann, M.,
    {\em On the classifying space of the family of virtually cyclic subgroups}, Pure and Applied Mathematics Quarterly, Vol. 8(2) (2012), 497-555
\bibitem{Meier} Meier, J.,
    {\em When is the graph product of hyperbolic groups hyperbolic?}, Geom. Dedicate Vol. 61 (1996), 29-41
    \bibitem{mislin}  Mislin, G.,
 {\em Classifying spaces for proper actions of mapping class groups},  M\"{u}nster J. of Math. 3 (2010), 263--272
\bibitem{MV} Mislin, G. and Valette, A.,
    { \em Proper group actions and the Baum-Connes conjecture}, Birkhauser Verlag (2003)
\bibitem{Morita} Morita, K.,
  {\em On the dimension of the product of topological spaces}, Tsukuba J. Math. 1 (1977), 1--6. 
\bibitem{Serre} Serre,. J.-P.,
    {\em Trees}, Springer (2003)
\bibitem{tom} t. Dieck, T.,
    {\em Transformation groups}, De Gruyter 8 (1987).
\bibitem{wolpert} Wolpert, S.A.,
     {\em The Weil-Petersson metric geometry}, Handbook of Teichm{\"u}ller theory (A. Papadopoulos, ed.) Vol. 2 (2009), 47--64
\bibitem{wolpert2} Wolpert, S.A.,
     {\em Weil-Petersson perspectives }, Problems on mapping class groups and related topics (B. Farb, ed.), Proc.
     Sympos. Pure Math., 74, Amer. Math. Soc., Providence, RI (2006), 301--316
\end{thebibliography}
\end{document}